\documentclass[11pt,a4paper]{article}
\oddsidemargin 0pt
\evensidemargin 20pt
\marginparwidth 29pt
\textwidth 470pt

\textheight 720pt
\topmargin -40pt

\usepackage[nottoc]{tocbibind}
\usepackage{graphicx}
\usepackage{enumerate}
\usepackage{amsmath}
\usepackage{amsfonts}
\usepackage{psfrag}
\usepackage{color}
\usepackage{pgf,tikz}
\usepackage{mathrsfs}
\usetikzlibrary{arrows}
\usepackage{fancyhdr}
\usepackage{txfonts} 
\usepackage{dutchcal}


\usepackage[colorlinks,pdfpagelabels,pdfstartview = FitH,bookmarksopen = true,bookmarksnumbered = true,linkcolor = blue,plainpages = false,hypertexnames = false,citecolor = red] {hyperref}

\newtheorem{theorem}{Theorem}

\newtheorem{corollary}[theorem]{Corollary}
\newtheorem{definition}{Definition}
\newtheorem{remark}{Remark}

\newtheorem{lemma}{Lemma}
\newtheorem{proposition}{Proposition}

\newcommand{\mint}{\mathop{\int\hskip -1,05em -\, \!\!\!}\nolimits}

\numberwithin{equation}{section}
\setlength{\delimitershortfall}{-0.1pt}
\allowdisplaybreaks[4]

\newenvironment{proof}{\smallskip\noindent\emph{Proof.}\hspace{1pt}}%
{\hspace{-5pt}{\nobreak\quad\nobreak\hfill\nobreak$\square$\vspace{8pt}%
\par}\smallskip\goodbreak}

\newcommand{\g}{\mathcal{G}}
\newcommand{\Ruz}{R{\accent'27u}{\accent'24z}i{\accent'24c}ka}

\newcommand{\m}{\mathcal{M}}
\newcommand{\reach}{\mathrm{reach}}

\newcommand\eps\varepsilon
\def\eqn#1$$#2$${\begin{equation}\label#1#2\end{equation}}

\newcommand{\be}{\begin{equation}}
\newcommand{\ee}{\end{equation}}

\newcommand{\rr}{\varrho}

\newcommand{\dist}{\operatorname{dist}}
\newcommand{\const}{\operatorname{const}}
\newcommand{\snr}[1]{\lvert #1\rvert}

\newcommand{\nr}[1]{\lVert #1 \rVert}

\newcommand{\stackleqq}[1]{\stackrel{\rif{#1}}{ \leq}}

\newcommand{\RN}{\mathbb{R}^{N}}
\newcommand{\SN}{\mathbb{S}^{N-1}}
\newcommand{\N}{\mathbb{N}}

\def\name[#1, #2]{#1 #2}

\newcommand{\rif}[1]{(\ref{#1})}

\newcommand{\stackleq}[1]{\stackrel{\rif{#1}}{ \leq}}

\newcommand{\U}{U}

\definecolor{ffqqqq}{rgb}{1.,0.,0.}
\definecolor{uuuuuu}{rgb}{0.26666666666666666,0.26666666666666666,0.26666666666666666}


\DeclareMathOperator{\diam}{diam}

\delimitershortfall=-0.1pt

\begin{document}
\title{Partial regularity for manifold constrained $p(x)$-harmonic maps}
\author{
\textsc{Cristiana De Filippis\thanks{Mathematical Institute, University of Oxford, Andrew Wiles Building, Radcliffe Observatory Quarter, Woodstock Road, Oxford, OX26GG, Oxford, United Kingdom. E-mail:
      \texttt{Cristiana.DeFilippis@maths.ox.ac.uk}} }}
     
\date{\ 
}
\maketitle

\thispagestyle{plain}
\vspace{-1.5cm}
\begin{abstract}
We prove that manifold constrained $p(x)$-harmonic maps are locally $C^{1,\beta_{0}}$-regular outside a set of zero $n$-dimensional Lebesgue's measure, for some $\beta_{0} \in (0,1)$. We also provide an estimate from above of the Hausdorff dimension of the singular set.
\end{abstract}
\setlength{\voffset}{-0in} \setlength{\textheight}{0.9\textheight}

\setcounter{page}{1} \setcounter{equation}{0}
\tableofcontents

\section*{Introduction}
We prove local $C^{1,\beta_{0}}$-partial regularity for manifold constrained $p(x)$-harmonic maps. More precisely, we consider local minimizers of the functional
\begin{flalign}\label{cvp}
W^{1,p(\cdot)}(\Omega,\m) \ni w \mapsto \mathcal{E}(w,\Omega):= \int_\Omega k(x)\snr{Dw}^{p(x)} \ dx\;, 
\end{flalign}
where $p(\cdot)$ and $k(\cdot)$ are H\"older continuous functions (see $(\mathrm{P1})$-$(\mathrm{P2})$ and $(\mathrm{K1})$-$(\mathrm{K2})$ below for the precise assumptions), $\Omega \subset \mathbb R^n$, $n\ge 2$, is an open, bounded subset and $\m\subset \RN$, $N\ge 3$, is an $m$-dimensional, compact submanifold endowed with a suitable topology. We refer to Section \ref{S1} for the precise notation. Our final outcome is that there exists a relatively open set $\Omega_{0}\subset \Omega$ of full $n$-dimensional Lebesgue measure such that $u \in C^{1,\beta_{0}}_{\mathrm{loc}}(\Omega_{0},\m)$ for some $\beta_{0}\in (0,1)$ and $\Sigma_{0}(u):=\Omega\setminus \Omega_{0}$ has Hausdorff dimension at the most equal to $n-\gamma_{1}$. Moreover, after imposing some extra restrictions on the variable exponent $p(\cdot)$, we are able to provide a further reduction to the Hausdorff dimension of the singular set of $\m$-constrained minimizers of the $p(\cdot)$-energy
\begin{flalign}\label{modello}
w\mapsto \int_\Omega \snr{Dw}^{p(x)} \ dx.
\end{flalign}
Let us put our results into the context of the available literature. Functionals with variable growth exponent modelled on the one in \rif{modello} have been introduced in the setting of Calculus of Variations and Homogenization in the fundamental works of Zhikov \cite{zhi1, zhi2, zhi3, zhi4}. Energies as in \rif{modello} also occur in the modelling of electro-rheological fluids, a class of non-newtonian fluids whose viscosity properties are influenced by the presence of external electromagnetic fields \cite{acemin, RR}. As for regularity, the first result in the vectorial case has been obtained by Coscia \& Mingione in \cite{cosmin}, where it is shown that local minimizers of energy \rif{modello} are locally $C^{1,\beta}$-regular in the unconstrained case. This is the optimal generalization of the classical results of Uhlenbeck concerning the standard case when $p(\cdot)$ is a constant. We refer to \cite{KM1, KM2, Manth, dark, uhl, ura} for a survey of regularity results in the $p$-growth case, both for scalar and vector valued minimizers. Subsequently, the regularity theory of functionals with variable growth has been developed in a series of interesting papers by Ragusa, Tachikawa and Usuba \cite{RT1, RT2, RT3, tac, TU}, where the authors established partial regularity results for unconstrained minimizers that are on the other hand obviously related to the constrained case. Especially, in \cite{tac} Tachikawa gives an interesting partial regularity result and some singular set estimates for a class of functionals related to the constrained minimization problem in which minimizers are assumed to take values in a single chart. This generalizes the well-known results of Giaquinta \& Giusti \cite{giagiu} valid in the case of quadratic functionals with special structure.  In this paper we finally tackle the case of local minimizers with values into a manifold, provided that suitable topological assumptions are considered on the manifold $\m$ and optimal regularity conditions are in force on $p(\cdot)$ and $k(\cdot)$. Our first main result is the following:
\begin{theorem}\label{T0}
Let $u \in W^{1,p(\cdot)}(\Omega, \m)$ be a local minimizer of the functional in \eqref{cvp}, where $p(\cdot)$ satisfies assumptions $(\mathrm{P1})$-$(\mathrm{P2})$, $k(\cdot)$ satisfies $(\mathrm{K1})$-$(\mathrm{K2})$ and $\m$ is as in $(\mathrm{\m 1})$-$(\mathrm{\m 2})$. Then there exists a relatively open set $\Omega_{0}\subset \Omega$ such that $u \in C^{1,\beta_{0}}_{\mathrm{loc}}(\Omega_{0},\m)$ for some $\beta_{0}\in (0,1)$, and $\mathcal{H}^{n-\gamma_{1}}(\Omega \setminus \Omega_{0})=0$.
\end{theorem}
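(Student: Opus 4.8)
The plan is to run the classical excess-decay scheme for partial regularity of constrained minimizers, adapted to the non-autonomous $p(\cdot)$-growth functional in \eqref{cvp}. Fix a candidate base point $x_{0}$, freeze the data there by setting $p_{0}:=p(x_{0})$, and let $V(\xi):=\snr{\xi}^{(p_{0}-2)/2}\xi$ be the auxiliary field associated with $p_{0}$; the argument is organised around geometric decay of the excess
\[
\mathbf{E}(x_{0},r):=\frac{1}{\snr{B_{r}(x_{0})}}\int_{B_{r}(x_{0})}\snr{\,V(\dd u)-(V(\dd u))_{B_{r}(x_{0})}\,}^{2}\,\D x ,
\]
where $(V(\dd u))_{B_{r}(x_{0})}$ is the mean value of $V(\dd u)$ on $B_{r}(x_{0})$. \emph{Step 1 (admissible competitors, Caccioppoli, higher integrability).} By $(\mathrm{\m 1})$--$(\mathrm{\m 2})$ the manifold $\m\subset\RN$ is compact and of class $C^{2}$, so there exist $\delta_{\m}>0$ and a $C^{2}$ nearest point projection $\Pi_{\m}$ on the tubular neighbourhood $\{y:\dist(y,\m)<\delta_{\m}\}$, with $\Pi_{\m}$ Lipschitz and $\dist(\cdot,\m)^{2}$ vanishing to second order on $\m$. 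For any $w$ that agrees with $u$ near $\partial B_{r}(x_{0})$ and takes values in that neighbourhood, $\Pi_{\m}\circ w$ is $\m$-valued and admissible, so local minimality of $u$, the chain rule for $\dd(\Pi_{\m}\circ w)$ and the second-order vanishing of $\dist(\cdot,\m)^{2}$ give $\mathcal{E}(u,B_{r})\le\mathcal{E}(w,B_{r})+C\int_{B_{r}}\dist(w,\m)^{2}\snr{\dd w}^{p(x)}\,\D x$. Specialising $w$ to a cut-off interpolation between $u$ and a constant (or affine) map yields a Caccioppoli inequality on $B_{r/2}(x_{0})$, and a Gehring-type lemma with increasing supports upgrades it to the higher integrability $\snr{\dd u}^{p(\cdot)}\in L^{1+\delta_{0}}_{\loc}(\Omega)$ for some $\delta_{0}>0$. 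In particular, on small balls $u$ is an \emph{almost minimizer} of the frozen functional $w\mapsto\int k(x_{0})\snr{\dd w}^{p_{0}}\,\D x$, with defect controlled by the H\"older moduli of $p(\cdot),k(\cdot)$ and by $\operatorname{osc}_{B_{r}}u$.

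\textbf{Step 2 (linearisation and excess decay).} On a ball $B_{r}(x_{0})$ where $\mathbf{E}(x_{0},r)$ is small and $\operatorname{osc}_{B_{r}(x_{0})}u$ is small — the latter being controlled, once $\mathbf{E}(x_{0},r)$ is small, by Sobolev--Poincar\'e and the local energy of $u$ — I would compare $u$ with the solution $v$ of the frozen \emph{unconstrained} system $\dv\!\big(k(x_{0})\snr{\dd v}^{p_{0}-2}\dd v\big)=0$ on $B_{r}(x_{0})$, $v=u$ on $\partial B_{r}(x_{0})$; equivalently, one invokes a $p_{0}$-harmonic approximation lemma. The almost-minimality and Caccioppoli inequality of Step~1, together with smallness of the projection defect $\int\dist(w,\m)^{2}\snr{\dd w}^{p(x)}\,\D x$ (of higher order once $\operatorname{osc}_{B_{r}}u$ is estimated by $\mathbf{E}$ and the energy), bound the average of $\snr{V(\dd u)-V(\dd v)}^{2}$ over $B_{r}(x_{0})$ by $\omega(\mathbf{E}(x_{0},r))\,\mathbf{E}(x_{0},r)+Cr^{2\sigma_{0}}$ with $\omega(t)\to0$ as $t\to0$ and $\sigma_{0}>0$ quantifying the H\"older moduli of $p(\cdot),k(\cdot)$; here one uses the higher integrability of Step~1 to handle the $\snr{\dd u}^{p_{0}}\log\snr{\dd u}$ terms produced by replacing $p(x)$ with $p_{0}$. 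For the comparison map $v$, the interior $C^{1,\alpha}$-regularity for the constant-exponent $p_{0}$-energy in the spirit of Uhlenbeck \cite{uhl} (whose variable-exponent counterpart is \cite{cosmin}) provides the decay of the average of $\snr{V(\dd v)-(V(\dd v))_{B_{\theta r}(x_{0})}}^{2}$ over $B_{\theta r}(x_{0})$ by $C_{\ast}\theta^{2\alpha}$ times the corresponding average over $B_{r}(x_{0})$, for all $\theta\in(0,1)$ and some $\alpha\in(0,1)$. Combining these with the algebraic properties of $V$, then fixing $\theta$ small and restricting to the regime where $\mathbf{E}(x_{0},r)$ is small enough (depending on $\theta$), one gets
\[
\mathbf{E}(x_{0},\theta r)\le\tfrac{1}{2}\,\theta^{2\beta_{0}}\,\mathbf{E}(x_{0},r)+C\,r^{2\sigma_{0}},\qquad 2\beta_{0}<\min\{2\alpha,2\sigma_{0}\},
\]
with $C$ depending also on the local energy. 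Iterating on dyadic radii (the standard excess-improvement lemma) yields $\mathbf{E}(x_{0},\rho)\le C\rho^{2\beta_{0}}$ for all small $\rho$, uniformly for $x_{0}$ in a neighbourhood of any point where the initial excess is small; Campanato's characterisation of H\"older continuity then gives $\dd u\in C^{0,\beta_{0}}_{\loc}$, i.e.\ $u\in C^{1,\beta_{0}}_{\loc}$, near such points, and along the iteration one also recovers $\operatorname{osc}_{B_{\rho}(x_{0})}u\to0$. Setting $\Omega_{0}:=\{x_{0}\in\Omega:\ \liminf_{r\to0}\mathbf{E}(x_{0},r)=0\}$, the decay estimate makes $\Omega_{0}$ relatively open, and since every Lebesgue point of $V(\dd u)$ — which belongs to $L^{2}_{\loc}$ after exploiting the higher integrability of Step~1 — lies in $\Omega_{0}$, we conclude $\snr{\Omega\setminus\Omega_{0}}=0$.

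\textbf{Step 3 (Hausdorff dimension of the singular set).} To improve ``$\snr{\Omega\setminus\Omega_{0}}=0$'' to ``$\mathcal{H}^{n-\gamma_{1}}(\Omega\setminus\Omega_{0})=0$'', I would prove fractional differentiability of $V(\dd u)$. Testing the Euler--Lagrange system of the \emph{constrained} problem with second finite differences $\tau_{-h}(\eta^{2}\tau_{h}u)$ — again rendered admissible through the projection $\Pi_{\m}$ plus a first-order correction — and using the monotonicity of the $p(\cdot)$-operator together with the H\"older continuity of $p(\cdot),k(\cdot)$ and the higher integrability of Step~1, one obtains a uniform $L^{2}_{\loc}$-bound for $\tau_{h}V(\dd u)/\snr{h}^{\sigma}$, hence $V(\dd u)\in W^{\sigma,2}_{\loc}(\Omega)$ for some $\sigma\in(0,1]$ related to the H\"older exponents and to $\gamma_{1}$. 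Since $\Omega\setminus\Omega_{0}$ is contained in $\{x_{0}:\liminf_{r\to0}\mathbf{E}(x_{0},r)>0\}$, the standard measure-density lemma for fractional Sobolev functions gives $\dim_{\mathcal{H}}(\Omega\setminus\Omega_{0})\le n-\gamma_{1}$, whence $\mathcal{H}^{n-\gamma_{1}}(\Omega\setminus\Omega_{0})=0$.

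\textbf{Main obstacle.} The delicate point is to make Steps~1--2 run \emph{simultaneously} for both departures from the model problem. The manifold-projection defect is of higher order in $\operatorname{osc}_{B_{r}}u$, but $\operatorname{osc}_{B_{r}}u$ must itself be bounded in terms of $\mathbf{E}(x_{0},r)$ and the local energy without circular reasoning, which dictates a careful ordering of the smallness parameters; meanwhile, passing from $\snr{\dd u}^{p(x)}$ to $\snr{\dd u}^{p_{0}}$ generates terms of type $\snr{\dd u}^{p_{0}}\log\snr{\dd u}$ that are tamed only via the higher integrability of Step~1 and the \emph{H\"older} — not merely log-H\"older — continuity of $p(\cdot)$. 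Reconciling these two mechanisms with all constants uniform in $x_{0}$, and then propagating the fractional regularity of $V(\dd u)$ through the nonlinear constraint in Step~3, is where the genuine work lies; the rest is a careful adaptation of the schemes in \cite{cosmin, tac, giagiu}.
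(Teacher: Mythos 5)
Your general scheme (Caccioppoli $\Rightarrow$ higher integrability $\Rightarrow$ comparison with a frozen problem $\Rightarrow$ harmonic approximation $\Rightarrow$ decay) is the correct one in spirit, but three points are genuinely off, two of which are the heart of the manifold-constrained theory.

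First, Step~1 does not work as written. You construct competitors by cutting off $u$ against a constant and then post-composing with the nearest-point projection $\Pi_{\m}$. But $\m$ is not convex in $\RN$, so the interpolation $(1-\eta)u+\eta\,(\text{const})$ has no reason to remain inside the tubular neighbourhood $\{\dist(\cdot,\m)<\delta_{\m}\}$ where $\Pi_{\m}$ is defined, and there is no a priori oscillation bound to invoke (that is exactly what one is trying to prove). This is precisely where hypothesis $(\mathrm{\m 2})$ — $[\gamma_{2}]-1$ connectivity — is essential; you never use it. The paper replaces $\Pi_{\m}$ with the \emph{finite energy extension} Lemma~\ref{ext}, built from a locally Lipschitz retraction $\psi\colon Q\setminus X\to\m$ defined off a lower-dimensional Lipschitz polyhedron $X$ (Lemma~\ref{hoplem}), plus a Fubini-type averaging over translates $X_{a}$, and $(\mathrm{\m 2})$ is exactly what makes $\dim X\le N-[\gamma_{2}]-1$ and hence $\int_{Q}\dist(\cdot,X)^{-\gamma_{2}}<\infty$. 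Without this your Caccioppoli inequality, and therefore everything downstream, is not justified. Relatedly, the paper compares $u$ first with a \emph{constrained} frozen minimizer $v\in W^{1,p_{2}(2r)}_{u}(B_{r},\m)$ and only then compares $v$ with an unconstrained $p_{2}(2r)$-harmonic map, controlling the defect through the Euler--Lagrange equation with second fundamental form term $A_{v}(Dv,Dv)$; your one-shot comparison with an unconstrained $v$ skips the step that actually handles the constraint.

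Second, your Step~3 is the wrong tool for the singular-set estimate stated in Theorem~\ref{T0}. You propose to prove $V(Du)\in W^{\sigma,2}_{\mathrm{loc}}$ by testing the Euler--Lagrange system with second finite differences and then invoke a fractional Sobolev capacity/density argument, but you do not say why $\sigma$ should be tied to $\gamma_{1}$, and for manifold-constrained maps of $p$-Laplacean type such fractional estimates are not the standard route (they are neither used in the paper's Theorem~\ref{T0} nor in the classical references Hardt--Lin, Luckhaus). The paper's argument is both simpler and actually yields the exponent: the singular set is contained in the set where $\limsup_{s\to0}s^{q-n}\int_{B_{s}(y)}|Du|^{q}\,dx>0$ with $q=p_{m}(1+\delta_{0})\ge\gamma_{1}(1+\delta_{0})$, and by the measure-density lemma (\cite[Prop.~2.7]{giu}) this set has Hausdorff dimension at most $n-q<n-\gamma_{1}$. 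The higher integrability threshold $\delta_{0}$ from Gehring is what makes this work; your ``fractional differentiability'' detour neither uses it nor reaches the target exponent.

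Third, and less critically, the architecture differs: the paper does not run a $V$-function excess decay aimed directly at Campanato. Instead it first proves a Morrey-type decay $\psi(\varsigma)\le c(\varsigma/s)^{\beta}$ for all $\beta\in(0,1)$ (Steps~1--4), giving $u\in C^{0,\beta}_{\mathrm{loc}}(\Omega_{0},\m)$; only \emph{then}, using this continuity, it localizes the image of $u$ into a single coordinate chart $f(B_{1}^{m})$, writes $u=f(\omega)$, and proves $C^{1,\beta_{0}}$-regularity for the unconstrained minimizer $\omega$ of the transformed integrand $H(x,y,z)$ via comparison with the frozen Uhlenbeck-type system of Proposition~\ref{rif} (Step~6). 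This chart localization is exactly what you would need to make an argument based on $\Pi_{\m}$ rigorous, and it is not available at the outset; that is the circularity you flag yourself as ``the main obstacle'' but do not actually resolve.

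In short: the skeleton is familiar, but the mechanism for building $\m$-valued competitors (Lemma~\ref{ext} using $(\mathrm{\m 2})$), the two-step constrained/unconstrained comparison, the route to $C^{1,\beta_{0}}$ via prior $C^{0,\beta}$-continuity and chart localization, and the higher-integrability route to the Hausdorff bound are all missing or replaced by steps that either fail ($\Pi_{\m}$ on the naive cutoff) or do not give the claimed exponent (fractional differentiability).
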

By strengthening further the assumptions on the variable exponent $p(\cdot)$, we are then able to provide a better dimension estimate for the singular set. This is in the following:
\begin{theorem}\label{T1}
Let $u \in W^{1,p(\cdot)}(\Omega,\m)$ be a constrained local minimizer of energy \eqref{modello}, 
where $p(\cdot)\in Lip(\Omega)$, $\gamma_{1}\ge 2$ and $\m$ is as in $(\mathrm{\m 1})$-$(\mathrm{\m 2})$. Then,
\begin{itemize}
    \item[i.] if $n\le [\gamma_{1}]+1$, then $u$ can have only isolated singularities;
    \item[ii.] if $n>[\gamma_{1}]+1$, then the Hausdorff dimension of the singular set is at the most $n-[\gamma_{1}]-1$.
\end{itemize}
\end{theorem}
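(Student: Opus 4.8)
The plan is to run a Federer-type dimension reduction based on a blow-up analysis of $\m$-constrained minimizers of \eqref{modello}, built on the $\eps$-regularity theory already developed for Theorem \ref{T0}. First I would fix a singular point $x_{0}\in\Sigma_{0}(u)$ and set $p_{0}:=p(x_{0})$. Since $p(\cdot)\in Lip(\Omega)$, on every small ball $B_{\rho}(x_{0})\subset\Omega$ the minimizer $u$ is an \emph{almost-minimizer} of the frozen energy $v\mapsto\int_{B_{\rho}(x_{0})}\snr{\dd v}^{p_{0}}\,dx$ among $\m$-valued maps sharing its boundary datum, with error controlled by $C\rho^{\alpha}(1+\text{energy})$ for some $\alpha\in(0,1)$; one gets this by estimating $\snr{\dd v}^{p(x)}-\snr{\dd v}^{p_{0}}$ through the bound $\snr{p(x)-p_{0}}\le L\rho$ and absorbing the logarithmic losses via the higher integrability of $\dd u$ and of comparison maps available for \eqref{modello}. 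Comparing $u$ on $B_{r}(x_{0})$ with the $0$-homogeneous extension of its trace on $\partial B_{r}(x_{0})$ then yields the almost-monotonicity of the scaled energy $E(x_{0},r):=r^{p_{0}-n}\int_{B_{r}(x_{0})}\snr{\dd u}^{p_{0}}\,dx$, namely $r\mapsto e^{cr^{\alpha}}E(x_{0},r)$ is non-decreasing, and hence the density $\Theta(x_{0}):=\lim_{r\to 0^{+}}E(x_{0},r)$ exists and is upper semicontinuous.

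Next I would carry out the blow-up. For $r_{j}\downarrow 0$ I set $u_{x_{0},r_{j}}(y):=u(x_{0}+r_{j}y)$ and show, via minimality together with a Luckhaus-type comparison carrying competitors through the constraint at small energy cost, that along a subsequence $u_{x_{0},r_{j}}$ converges strongly in $W^{1,p_{0}}_{\mathrm{loc}}(\ern,\m)$ to a \emph{tangent map} $\varphi\colon\ern\to\m$, which is itself a minimizing $p_{0}$-harmonic map into $\m$; since the error in the monotonicity formula disappears in the limit, $r\mapsto r^{p_{0}-n}\int_{B_{r}(0)}\snr{\dd\varphi}^{p_{0}}\,dx$ is constant, forcing $\varphi$ to be positively $0$-homogeneous, $\varphi(y)=\varphi(y/\snr{y})$. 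The $\eps$-regularity estimate underlying Theorem \ref{T0} then gives the dichotomy that $x_{0}$ is a regular point if and only if $\Theta(x_{0})=0$, equivalently if and only if some tangent map at $x_{0}$ is constant; consequently $\Sigma_{0}(u)$ coincides with the relatively closed set $\{x\in\Omega:\Theta(x)\ge\eps_{0}\}$, and at each of its points every tangent map is non-constant.

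The dimension reduction is then routine. A $0$-homogeneous map $\psi(y)=\Psi(y/\snr{y})$ with $\Psi\colon\mathbb{S}^{\ell-1}\to\m$, regarded on $\er^{\ell}$, has $p_{0}$-energy on the unit ball equal to a fixed multiple of $\big(\int_{\mathbb{S}^{\ell-1}}\snr{\nabla\Psi}^{p_{0}}\big)\int_{0}^{1}r^{\ell-1-p_{0}}\,dr$, which is finite only if $\ell>p_{0}$ or $\Psi$ is constant. Recalling that $p_{0}\ge\gamma_{1}$ (since $\gamma_{1}\le\inf_{\Omega}p(\cdot)$ by its definition, see Section \ref{S1}), every minimizing $0$-homogeneous $p_{0}$-harmonic map from $\er^{\ell}$ to $\m$ with $\ell\le[\gamma_{1}]$ is constant; a fortiori any such map on $\er^{\ell}$ with $\ell\le[\gamma_{1}]+1$ is singular at most at the origin, because a non-trivial translation-invariance subspace would descend, upon restriction to its orthogonal complement, to a non-constant $0$-homogeneous minimizer in dimension $\le[\gamma_{1}]$. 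Feeding this into Federer's abstract dimension reduction lemma — applicable here because of the strong convergence of blow-ups, the $0$-homogeneity of tangent maps, and the upper semicontinuity of $\Theta$, exactly as for $p$-energy minimizing maps — I obtain $\dim_{\mathcal{H}}\Sigma_{0}(u)\le n-[\gamma_{1}]-1$ when $n>[\gamma_{1}]+1$, and, when $n\le[\gamma_{1}]+1$, that $\Sigma_{0}(u)$ is discrete, i.e.\ $u$ has at most isolated singularities. This proves (i) and (ii).

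The main obstacle I expect is the blow-up step, where the variable exponent and the manifold constraint interact: making the freezing quantitative enough to produce a genuine almost-monotonicity formula, and proving strong $W^{1,p_{0}}$-convergence of the rescalings to a \emph{minimizing} homogeneous limit — the Luckhaus lemma for $\m$-valued $W^{1,p_{0}}$-maps, together with the absorption of the exponent-freezing errors, is the delicate point, and it is here that the hypothesis $\gamma_{1}\ge 2$ enters, both to keep the $p_{0}$-Laplace system governing the blow-up non-degenerate and to render the codimension estimate non-trivial. Once these ingredients are in place, the density characterization of regular points and Federer's reduction follow routinely from the $\eps$-regularity machinery already assembled for Theorem \ref{T0}.
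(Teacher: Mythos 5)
Your strategy is essentially the one in the paper: establish a compactness result for rescaled minimizers and a monotonicity formula for a suitably weighted, frozen $p$-energy, then run a Federer-type dimension reduction to reduce matters to the constant-exponent case. Where you and the paper differ is mostly in the packaging. The paper proves a compactness lemma (Lemma~\ref{com}) for sequences of constrained minimizers of functionals $\mathcal{E}_{j}(w)=\int k_{j}(x)|Dw|^{p_{j}(x)}dx$ under uniform bounds on $k_{j},p_{j}$, in which the crucial step of passing the constraint through the gluing is done via the retraction-based extension Lemma~\ref{ext} (not a Luckhaus lemma), and a monotonicity statement (Lemma~\ref{mon}, from Tachikawa) for the quantity $\Phi(t)=t^{p_{2}(t)-n}e^{At^{\alpha}}\int_{B_{t}}k(x)|Du|^{p_{2}(t)}dx$ that directly controls $\int_{\partial B_{1}}|u(Rx)-u(rx)|^{p_{2}(r)}$. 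These two lemmas let the paper blow up once, land in the constant-exponent world, and there invoke Hardt--Lin's Theorem 4.5 (for minimizing $p_{0}$-harmonic maps into $\m$) as a black box for both cases: in case (i) a contradiction argument produces a segment of singularities of a $0$-homogeneous limit and in case (ii) the Giaquinta--Martinazzi Chapter 10 blow-up preserves $\mathcal{H}^{l}$-mass. You instead propose to re-derive the constant-exponent dimension reduction (the radial energy computation for $0$-homogeneous maps, the descent through translation-invariance, Federer's abstract reduction); that is correct and self-contained but does not add anything over citing Hardt--Lin once one is in the constant-$p_{0}$ setting, and it requires you to separately prove strong convergence of blow-ups and upper semicontinuity of the density, which the paper gets more cheaply from Lemma~\ref{com} (weak convergence plus convergence of the energies).

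One inaccuracy worth flagging: your explanation of why $\gamma_{1}\ge 2$ is needed is off. It is not about keeping the blow-up $p_{0}$-Laplace system ``non-degenerate'' (for $p_{0}>2$ that system is degenerate, not non-degenerate), nor about making the codimension estimate non-trivial (it would be non-trivial also for $\gamma_{1}\in(1,2)$). In the paper the restriction comes directly from the monotonicity formula, Lemma~\ref{mon}, which is stated (and proven, following Tachikawa's Lemma~4.1) only under $p(x)\ge 2$; it is a technical hypothesis in the derivation of the differential inequality for $\Phi$, not a conceptual obstruction in the blow-up. Similarly, note that the case $p(\cdot)>n-\delta_{0}/2$ is disposed of first (Step~7 of Theorem~\ref{T0} already gives full regularity there), which is how the paper legitimately restricts to $\gamma_{2}<n$ before applying the monotonicity formula; your outline implicitly needs the same reduction, since the $0$-homogeneous extension competitor only has finite $p_{0}$-energy when $p_{0}<n$.
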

As they are stated, our results are the natural generalization of the classical ones in \cite{harkinlin, harlin, luc, shouhl} for the case $p(\cdot)\equiv$ constant. For the vectorial quasiconvex case with standard $p$-growth we refer to the recent work of Hopper \cite{hop}. The extension we make here to the variable exponent case requires a number of non-trivial additional ideas and tools, especially, as far as the dimension estimates stated in Theorem \ref{T1} are concerned. This is also related to the recent, aforementioned paper of Tachikawa \cite{tac}, and it is based on the use of a suitable monotonicity formula. We remark that the variable exponent functional in \rif{cvp} is a significant instance of functionals with $(p,q)$-growth (following the terminology introduced by Marcellini, \cite{mar1, mar2}). These are variational integrals of the type $w\mapsto \int F(x,Dw) \ dx$, where the integrand $F(\cdot)$ satisfies
\begin{flalign*}
\snr{z}^{p}\lesssim F(x,y,z)\lesssim (1+\snr{z}^{q})\,, \qquad 1< p\leq q\;.
\end{flalign*}
The study of such functionals has undergone an intensive development over the last years, see for instance \cite{carkrispas, diestrver1,sharp, lie, mar1, mar2, dark}. Another prominent model in this class is the so called Double Phase energy, where
\begin{flalign*}
F(x,z)=\snr{z}^{p}+a(x)\snr{z}^{q}\;, \qquad 0\leq a(x)\leq L\;.
\end{flalign*}
This model shares several features with the variable growth exponent and has been again  introduced by Zhikov in \cite{zhi3}. Indeed, here once again the growth exponent with respect to the gradient variable is determined by the space variable $x$, since the ellipticity type changes according to the the positivity of the coefficient $a(\cdot)$. There are several analogies between the variable exponent energy and the double phase one. In particular, one should notice the similarities between the use of the Gehring's Lemma-based reverse H\"older inequalities made here and the reverse H\"older inequality coming from fractional differentiability exploited in \cite{colmin, colmin2}. Moreover, compare the use of localization methods based on $p$-harmonic type approximation implemented here and in \cite{barcolmin}. Such analogies point to a unified approach to non-autonomous functionals with $(p,q)$-growth conditions, partially implemented in \cite{me}. We plan to investigate this in the context of constrained minimizers in a forthcoming paper \cite{demi}. 

\section{Notation, main assumptions and functional setting}\label{S1}
Throughout this paper, $\Omega$ denotes an open, bounded subset of $\mathbb{R}^{n}$ with $n\ge 2$ and our target space will be a submanifold of $\RN$, $N\ge 3$. As usual, we denote by $c$ a general constant larger than one. Different occurrences from line to line will be still indicated by $c$ and relevant dependencies from certain parameters will be emphasized using brackets, i.e.: $c(n,p)$ means that $c$ depends on $n$ and $p$. We denote $B_{r}(x_{0}):=\left\{x \in \mathbb{R}^{n}\colon \snr{x-x_{0}}<r\right\}$ the open ball centered in $x_{0}$ with radius $r>0$; when not relevant, or clear from the context, we shall omit indicating the center: $B_{r}\equiv B_{r}(x_{0})$. Moreover, for integer $k\ge 1$, by $\omega_{k}$ we mean the $k$-dimensional Lebesgue measure of the unit ball $B_{1}(0)\subset \mathbb{R}^{k}$. Along the paper, $k$ will assume values $N$ or $m$. When referring to balls in $\RN$, we will stress it with the apex $N$, i.e.: $B_{r}^{N}(a_{0})$ is the open ball with center $a_{0}\in \RN$ and positive radius $r$. For $\alpha,\beta \in \{1,\cdots,n\}$ and $i,j\in\{1,\cdots,N\}$, we set $\delta^{\alpha\beta}\equiv0$, $\delta_{ij}\equiv 0$ if $\alpha \not =\beta$, $i\not =j$ respectively and $\delta^{\alpha\alpha}=\delta_{ii}\equiv1$. With $U\subset \mathbb{R}^{n}$ being a measurable subset with positive, finite Lebesgue measure $0<\snr{U}<\infty$ and with $f\colon U\to \mathbb{R}^{k}$ being a measurable map, we shall denote by
\begin{flalign*}
(f)_{U}:= \mint_{U}f(x) \ dx =\frac{1}{\snr{U}}\int_{U}f(x) \ dx
\end{flalign*}
its integral average. In particular, when $U\equiv B_{r}(x_{0})$, we will indicate only the radius and, if necessary, the centre of the ball, i.e.: $(f)_{r}\equiv (f)_{r,x_{0}}:=(f)_{B_{r}(x_{0})}$. For $g\colon \Omega\to \mathbb{R}^{k}$ and $U\subset \Omega$, with $\gamma \in (0,1]$ being a given number we shall denote
\begin{flalign*}
[g]_{0,\gamma;U}:=\sup_{x,y \in U; x\not=y}\frac{\snr{g(x)-g(y)}}{\snr{x-y}^{\gamma}}, \quad [g]_{0,\gamma}:=[g]_{0,\gamma;\Omega}.
\end{flalign*}
It is well known that the quantity defined above is a seminorm and when $[g]_{0,\gamma;U}<\infty$, we will say that $g$ belongs to the H\"older space $C^{0,\gamma}(U,\mathbb{R}^{k})$. Let us turn to the main assumptions that will characterize our problem. When considering the functional in \eqref{cvp}, the exponent $p(\cdot)$ will always satisfy
\begin{flalign*}
&(\mathrm{P1}) \ p\in C^{0,\alpha}(\Omega), \ \ \alpha \in (0,1],\\
&(\mathrm{P2}) \ 1<\gamma_{1}\le p(x)\le \gamma_{2}<\infty \ \ \mathrm{for \ all \ }x \in \Omega,\\
& \gamma_{1}:=\inf_{x\in \Omega}p(x) \quad \mathrm{and}\quad \gamma_{2}:=\sup_{x\in \Omega}p(x),
\end{flalign*}
while the coefficient $k(\cdot)$ is so that
\begin{flalign*}
&(\mathrm{K1}) \ k\in C^{0,\nu}(\Omega), \ \ \nu \in (0,1],\\
&(\mathrm{K2}) \ 0<\lambda\le k(x)\le \Lambda<\infty \ \ \mathrm{for \ all \ }x \in \Omega,
\end{flalign*}
hold true. Clearly, in hypotheses $\mbox{(P1)}$-$\mbox{(K1)}$ there is no loss of generality in supposing $\alpha=\nu$, since in the forthcoming estimates only $\min\left\{\alpha,\nu\right\}$ will be relevant, so, for simplicity, from now on we will assume $p(\cdot)$, $k(\cdot)\in C^{0,\alpha}(\Omega)$. These assumptions are optimal in order to get local H\"older continuity for the gradient of a minimizer of problem \eqref{cvp}. This is evident already in the scalar, linear case, (Schauder estimates). For any given ball $B_{r}\Subset \Omega$, we denote
\begin{flalign}\label{p1p2}
p_{1}(r):=\inf_{x \in B_{r}}p(x) \quad \mbox{and}\quad p_{2}(r):=\sup_{x \in B_{r}}p(x).
\end{flalign}
Notice that there is no loss of generality in assuming $\gamma_{1}<\gamma_{2}$, otherwise $p(\cdot)\equiv \const$ on $\Omega$, and in this case the problem is very well understood, \cite{giamar,harkinlin,harlin,luc,shouhl,sim}. Furthermore, we need to impose some topological restriction on the manifold $\m$. Precisely, we ask that
\begin{flalign*}
&(\mathrm{\mathcal{M}1}) \ \m \ \mathrm{is \ a \ compact, \ }m-\mathrm{dimensional}, \ C^{3} \ \mathrm{Riemannian \ submanifold \ without \ boundary \ of \ }\RN,\\
&(\mathrm{\mathcal{M}2}) \ \m \ \mathrm{is} \  [\gamma_{2}]-1 \ \mathrm{connected}.
\end{flalign*}
Here $[x]$ denotes the integer part of $x$. We refer to Section \ref{pre} for a detailed description of the geometry of $\m$. Finally, for shorten the notation we shall collect the main parameters of the problem in the quantity 
\begin{flalign*}
\texttt{data}:=(n,N,\m,\lambda,\Lambda,\gamma_{1},\gamma_{2},[k]_{0,\alpha},[p]_{0,\alpha},\alpha).
\end{flalign*}
As to fully clarify the framework we are going to adopt, we need to introduce some basic terminology on the so-called Musielak-Orlicz-Sobolev spaces. Essentially, these are Sobolev spaces defined by the fact that the distributional derivatives lie in a suitable Musielak-Orlicz space, rather than in a Lebesgue space as usual. Classical Sobolev spaces are then a particular case. Such spaces and related variational problems are discussed for instance in \cite{harhaskle, zhi4}, to which we refer for more details. Here, we will consider spaces related to the variable exponent case in both unconstrained and manifold-constrained settings. 
\begin{definition} \label{N11}
Given an open set $\Omega \subset \mathbb{R}^{n},$ the Musielak-Orlicz space $L^{p(\cdot)}(\Omega,\mathbb{R}^{k})$, $k\ge 1$, with $p(\cdot)$ satisfying $\mathrm{(P1)}$-$\mathrm{(P2)}$, is defined as
\begin{flalign*}
L^{p(\cdot)}(\Omega,\mathbb{R}^{k}):=\left \{ \ w\colon \Omega \to \mathbb{R}^{k} \ \mathrm{measurable \ and \ }\int_{\Omega}\snr{w}^{p(x)}\ dx <\infty \ \right\},
\end{flalign*}
and, consequently,
\begin{flalign*}
W^{1,p(\cdot)}(\Omega,\mathbb{R}^{k}):=\left \{ \ w\in W^{1,1}(\Omega,\mathbb{R}^{k})\cap L^{p(\cdot)}(\Omega,\mathbb{R}^{k})\ \mathrm{such \ that \ }Dw \in L^{p(\cdot)}(\Omega, \mathbb{R}^{k\times n}) \ \right\}.
\end{flalign*}
The variants $W^{1,p(\cdot)}_{0}(\Omega,\mathbb{R}^{k})$ and $W^{1,p(\cdot)}_{\mathrm{loc}}(\Omega,\mathbb{R}^{k})$ are defined in an obvious way.
\end{definition}
It is well known that, under assumptions $(\mathrm{P1})$-$(\mathrm{P2})$, the set of smooth maps is dense in $W^{1,p(\cdot)}(\Omega,\mathbb{R}^{k})$, see e.g. \cite{sharp,zhi1,zhi3,zhi4}. Following \cite{me,hop} we also recall the analogous definition of such spaces when mappings take values into $\m$.
\begin{definition}\label{N12}
Let $\m$ be a compact submanifold of $\mathbb{R}^{k}$, $k\ge 3$, without boundary and $\Omega \subset \mathbb{R}^{n}$ an open set. For $p(\cdot)$ satisfying $\mbox{(P1)}$-$(\mbox{P2})$, the Musielak-Orlicz-Sobolev space $W^{1,p(\cdot)}(\Omega,\m)$ of functions into $\m$ can be defined as
\begin{flalign*}
W^{1,p(\cdot)}(\Omega,\m):=\left\{ \ w \in W^{1,p(\cdot)}( \Omega,\mathbb{R}^{k})\colon \ w(x) \in \m \ \mathrm{for \ a.\ e.} \ x \in \Omega \ \right\}.
\end{flalign*}
The local space $W^{1,p(\cdot)}_{\mathrm{loc}}(\Omega,\m)$ consists of maps belonging to $W^{1,p(\cdot)}(U,\m)$ for all open sets $U\Subset \Omega$.
\end{definition}
When $\mbox{(P1)}$-$\mbox{(P2)}$ and $(\mathrm{\mathcal{M}1})$-$(\mathrm{\mathcal{M}2})$ are in force, a quick modification of \cite[Lemma 6]{me} shows that Lipschitz maps are dense in $W^{1,p(\cdot)}(\Omega,\m)$ as well. Of course, when $p(\cdot)\equiv \const$, Definitions \ref{N11} and \ref{N12} reduce to the classical Sobolev spaces $W^{1,p}(\Omega,\mathbb{R}^{k})$ and $W^{1,p}(\Omega,\m)$ respectively. Owing to the $p(\cdot)$-growth behavior of the integrand in \eqref{cvp}, we display our definition of local minimizer.
\begin{definition}\label{D1}
A map $u\in W^{1,p(\cdot)}_{\mathrm{loc}}(\Omega,\m)$ is a constrained local minimizer of the functional $\mathcal{E}(\cdot)$ defined in \eqref{cvp} if and only if
\begin{flalign*}
x\mapsto k(x)\snr{Du(x)}^{p(x)}\in L^{1}_{\mathrm{loc}}(\Omega) \quad \mathrm{and}\quad \int_{U}k(x)\snr{Du}^{p(x)} \ dx\le \int_{U}k(x)\snr{Dw}^{p(x)} \ dx,
\end{flalign*}
for all open sets $U\Subset \Omega$ and all $w\in W^{1,p(\cdot)}_{u}(U,\m)$, where
\begin{flalign*}
W^{1,p(\cdot)}_{u}(U,\m):=\left(u+W^{1,p(\cdot)}_{0}(\U, \mathbb{R}^{N})\right)\cap W^{1,p(\cdot)}(\U,\m).
\end{flalign*}
\end{definition}
In Definition \ref{D1}, local minimizers are given as maps belonging to the local space $W^{1,p(\cdot)}_{\mathrm{loc}}(\Omega,\m)$. We stress that, since all the regularity properties of constrained local minimizers treated in this work are of local nature, there is no loss of generality in assuming that $u \in W^{1,p(\cdot)}(\Omega,\m)$ and that $x\mapsto k(x)\snr{Du(x)}^{p(x)}\in L^{1}(\Omega)$, see the statements of Theorems \ref{T0}-\ref{T1}.
\begin{remark}
\emph{By continuity, all the constants depending on certain fixed values of the map $p(\cdot)$ are stable when $p(\cdot)$ varies in the interval $[\gamma_{1},\gamma_{2}]$. Thus, whenever a constant depends on some $p \in [\gamma_{1},\gamma_{2}]$, this dependence will be denoted by only mentioning $\gamma_{1}$ and $\gamma_{2}$, i.e.: $c(p)\equiv c(\gamma_{1}, \gamma_{2})$.}
\end{remark}

\section{Preliminaries}\label{pre}
We shall split this section into two parts. In the first one, we collect some basic results concerning the regularity of minimizers of certain type of functionals and in the second one we will give a detailed description of the topology of $\m$, together with some extension lemmas, which will turn fundamental in order to construct suitable comparison maps in some steps of the proofs of Theorems \ref{T0} and \ref{T1}.
\subsection{Known regularity results}
We start by reporting a Lipschitz estimate for the gradient and a decay estimate for the excess functional of unconstrained local minimizers of functionals of the $p$-laplacean type.
\begin{proposition}\cite{acefus1,giagiudiff,giamod}\label{rif}
Let $\Omega\subset \mathbb{R}^{n}$ be an open, bounded set, $p\in (1,\infty)$, $0<\lambda<\Lambda $ and $0<\mathcal{l}<L$ be absolute constants, $g^{\alpha\beta}$ and $h_{ij}$ be constant matrices, uniformly elliptic in the sense that
\begin{flalign*}
\mathcal{l}\snr{\xi}^{2}\le g^{\alpha\beta}\xi_{\alpha}\xi_{\beta}\le L\snr{\xi}^{2} \ \ \mbox{for all }\xi \in \mathbb{R}^{n}\quad \mbox{and}\quad \mathcal{l}\snr{\eta}^{2}\le h_{ij}\eta^{i}\eta^{j}\le L\snr{\eta}^{2} \ \ \mbox{for all }\eta \in \mathbb{R}^{k},
\end{flalign*}
uniformly bounded, $[g^{\alpha\beta}]=[g^{\alpha\beta}]^{-1}$; $\alpha,\beta \in \{1,\cdots,n\}$, $i,j\in \{1,\cdots,k\}$. Then, if $v\in W^{1,p}(\Omega,\mathbb{R}^{k})$ is a local minimizer of the integral functional
\begin{flalign}\label{rev1}
W^{1,p}(\Omega,\mathbb{R}^{k})\ni w\mapsto\mathcal{F}(w,\Omega):=\int_{\Omega}k_{0}\left(g^{\alpha\beta}h_{ij}D_{\alpha}w^{i}D_{\beta}w^{j}\right)^{p/2} \ dx,
\end{flalign}
where $k_{0}\in [\lambda,\Lambda]$ is a constant, then for all $B_{\varrho}\subset B_{r}\Subset \Omega$ the following reference estimates hold:
\begin{flalign}\label{140}
\mint_{B_{\varrho}}\snr{Dv}^{p} \ dx \le c \mint_{B_{r}}\snr{Dv}^{p} \ dx \quad \mathrm{and} \quad \mint_{B_{\varrho}}\snr{Dv-(Dv)_{\varrho}}^{p} \ dx \le c(\varrho/r)^{\mu p}\mint_{B_{r}}\snr{Dv}^{p} \ dx,
\end{flalign}
for $c=c(n,k,\lambda,\Lambda,\mathcal{l},L,p)$ and $\mu=\mu(n,k,\lambda,\Lambda,\mathcal{l},L,p)$. 
\end{proposition}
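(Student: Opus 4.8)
The plan is to reduce the two estimates in \eqref{140} to the classical regularity theory for minimizers of the $p$-Laplacean system. The key observation is that the functional $\mathcal{F}(\cdot)$ in \eqref{rev1} is, after a linear change of variables, a genuine $p$-Laplacean energy. Indeed, since the matrix $[g^{\alpha\beta}]$ coincides with its own inverse it is symmetric with eigenvalues in $\{+1,-1\}$, but uniform ellipticity forces all eigenvalues to equal $+1$, so $g^{\alpha\beta}=\delta^{\alpha\beta}$ on the nose (or, at worst, we keep it as a fixed positive definite constant matrix and diagonalize). Likewise $h_{ij}$ is a fixed symmetric positive definite constant matrix with eigenvalues in $[\mathcal{l},L]$. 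Writing $h_{ij}=(A^{T}A)_{ij}$ for a constant invertible matrix $A$ and setting $\tilde w:=Aw$, one checks that $g^{\alpha\beta}h_{ij}D_\alpha w^i D_\beta w^j=\snr{D\tilde w}^2$, so that $\mathcal{F}(w,\Omega)=k_0\int_\Omega \snr{D\tilde w}^{p}\,dx$; hence $v$ is a local minimizer of $\mathcal{F}$ if and only if $\tilde v=Av$ is a local minimizer of the $p$-energy $\int\snr{D\tilde w}^p\,dx$.

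Next I would invoke the known regularity theory for $p$-harmonic maps — this is exactly the content of the references \cite{acefus1,giagiudiff,giamod} cited in the statement. For a local minimizer $\tilde v$ of $\int \snr{D\tilde w}^p\,dx$ one has $\tilde v\in C^{1,\mu}_{\mathrm{loc}}$ together with the two quantitative facts we need: the Caccioppoli-type (or reverse H\"older) inequality giving the first estimate in \eqref{140} for $\tilde v$, and the decay estimate $\mint_{B_\varrho}\snr{D\tilde v-(D\tilde v)_\varrho}^p\,dx\le c(\varrho/r)^{\mu p}\mint_{B_r}\snr{D\tilde v}^p\,dx$, which follows from $C^{1,\mu}$-regularity combined with the excess decay for the $p$-Laplacean system. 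These are standard and I would quote them directly rather than reprove them.

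Finally I would transfer these back to $v$ by undoing the change of variables. Since $D\tilde v=A\,Dv$ and $A$ is a fixed invertible matrix with norm and inverse norm controlled by $\mathcal{l},L$ (and similarly absorbing the change from $g^{\alpha\beta}$), we have $\snr{A\,Dv}\simeq\snr{Dv}$ with constants depending only on $\mathcal{l},L$ (and $n,k$), so both inequalities in \eqref{140} hold for $v$ with constants $c,\mu$ depending only on $n,k,\lambda,\Lambda,\mathcal{l},L,p$. The constant $k_0\in[\lambda,\Lambda]$ plays no role since it factors out of the Euler--Lagrange equation, which explains why $\lambda,\Lambda$ enter the final dependencies only through the change of variables bookkeeping.

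The only mildly delicate point — not really an obstacle — is making sure the constant $\mu$ stays uniform: it comes from the excess decay exponent for $p$-harmonic maps, which depends continuously on $p$ and on the ellipticity data but not on the solution, so uniformity over the stated parameter ranges is immediate. Everything else is a routine application of the cited results after the linear normalization.
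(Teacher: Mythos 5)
Your reduction is correct. The paper itself gives no proof of this proposition: it simply cites \cite{acefus1,giagiudiff,giamod}, which establish both estimates in \eqref{140} directly for the full frozen functional $\int k_0 (g^{\alpha\beta}h_{ij}D_\alpha w^i D_\beta w^j)^{p/2}\,dx$. You instead normalize first: the involutivity condition $[g^{\alpha\beta}]=[g^{\alpha\beta}]^{-1}$ together with positive definiteness of the symmetric part forces $g^{\alpha\beta}=\delta^{\alpha\beta}$, and the constant symmetric positive definite target metric $h_{ij}$ is absorbed by the linear change $\tilde v := h^{1/2}v$, reducing the problem to the pure $p$-Laplacean energy and the classical Uhlenbeck/Acerbi--Fusco regularity (the $L^\infty$ gradient bound giving $\eqref{140}_1$, the $C^{1,\mu}$ excess decay giving $\eqref{140}_2$). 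This is a legitimate alternative route: it buys a transparent structural reduction, at the price of transferring the estimates back through the linear map, which you do correctly since $Dv-(Dv)_\varrho = h^{-1/2}\bigl(D\tilde v-(D\tilde v)_\varrho\bigr)$ with operator norms controlled by $\mathcal{l}$ and $L$. The cited references treat the metric intrinsically, which is what is genuinely needed elsewhere in the paper (Step 6, where $h_{ij}$ depends on the map before freezing), but for the constant-coefficient statement here your normalization works cleanly. As you observe, $\lambda,\Lambda$ are in fact superfluous in the final constants since the positive factor $k_0$ drops out of minimality; the paper's listed dependence on them is harmless.
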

The next is a $p$-harmonic approximation lemma, which will play a crucial role in the proof of Theorem \ref{T0}. We will state it in the form which better fits our necessities.
\begin{lemma}{\cite{diestrver}}\label{phihar}
Let $\Omega\subset \mathbb{R}^{n}$ be an open subset and $p\in (1,\infty)$. For every $\tilde{\theta}>0$ and $\tilde{d} \in (0,1)$ there exists $\tilde{\delta}>0$ depending only on $\tilde{\theta}$, $\tilde{d}$, $p$, such that the following holds. Let $B_{r}\subset \mathbb{R}^{n}$ be a ball and $\tilde{B}_{r}$ denote either $B_{r}$ or $B_{2r}$. If $v\in W^{1,p}(\tilde{B}_{r},\mathbb{R}^{k})$ is almost $p$-harmonic in the sense that
\begin{flalign}\label{136}
\mint_{B_{r}}p\snr{Dv}^{p-1}\frac{Dv}{\snr{Dv}}\cdot D \varphi \ dx\le \tilde{\delta}\mint_{\tilde{B}_{r}}\left(\snr{Dv}^{p}+\nr{D\varphi}_{L^{\infty}(B_{r})}^{p}\right) \ dx,
\end{flalign}
for all $\varphi \in C^{\infty}_{c}(B_{r},\mathbb{R}^{k})$, then the unique map $\tilde{h} \in W^{1,p}(B_{r},\mathbb{R}^{k})$, solution to the Dirichlet Problem
\begin{flalign}\label{har1}
v+W^{1,p}_{0}(B_{r},\mathbb{R}^{k})\ni w\mapsto \min\int_{B_{r}}\snr{Dw}^{p} \ dx
\end{flalign}
satisfies
\begin{flalign}\label{har2}
\left(\mint_{B_{r}}\snr{Dv-D\tilde{h}}^{p\tilde{d}} \ dx\right)^{\frac{1}{\tilde{d}}}\le \tilde{\theta} \mint_{\tilde{B}_{r}}\snr{Dv}^{p} \ dx.
\end{flalign}
\end{lemma}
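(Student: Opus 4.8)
The plan is to argue by contradiction via a compactness and scaling reduction, identifying the weak limit of a sequence of \emph{almost} $p$-harmonic maps with a genuine $p$-harmonic map by means of a Lipschitz truncation; this is essentially the strategy of Diening, Stroffolini and Verde. Suppose the assertion fails: there are $\tilde\theta>0$, $\tilde d\in(0,1)$, $p\in(1,\infty)$, balls $B_{r_j}$ and maps $v_j\in W^{1,p}(\tilde B_{r_j},\mathbb{R}^k)$ satisfying \eqref{136} with $\tilde\delta=1/j\to 0$ but violating \eqref{har2}. Since \eqref{136} and \eqref{har2} are invariant under $x\mapsto x_0+r_jx$ and $1$-homogeneous in $v$ (the minimizer $\tilde h$ of \eqref{har1} transforming accordingly), after translating, rescaling, subtracting mean values and passing to a subsequence along which $\tilde B_{r_j}$ is always $B_1$ or always $B_2$ — call it $\tilde B$ — we may assume $\mint_{\tilde B}\snr{\dd v_j}^p\,dx=1$ and $(v_j)_{\tilde B}=0$, so that $\{v_j\}$ is bounded in $W^{1,p}(\tilde B,\mathbb{R}^k)$. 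Let $h_j\in W^{1,p}(B_1,\mathbb{R}^k)$ be the $p$-harmonic replacement of $v_j$, i.e.\ the unique minimizer in \eqref{har1}; by minimality $\{h_j\}$ is bounded in $W^{1,p}(B_1)$, and $w_j:=v_j-h_j\in W^{1,p}_0(B_1,\mathbb{R}^k)$ satisfies $\bigl(\mint_{B_1}\snr{\dd w_j}^{p\tilde d}\,dx\bigr)^{1/\tilde d}>\tilde\theta$ for every $j$. Along a further subsequence, $v_j\rightharpoonup v$ and $h_j\rightharpoonup h$ weakly in $W^{1,p}$, strongly in $L^p$ and a.e.; in particular $w_j\rightharpoonup v-h\in W^{1,p}_0(B_1,\mathbb{R}^k)$. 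As a preliminary I would record a uniform interior higher-integrability estimate: a Caccioppoli inequality for the almost $p$-harmonic maps $v_j$ (whose proof itself needs a Lipschitz truncation, to handle the unbounded test function), combined with Gehring's lemma, gives $\snr{\dd v_j},\snr{\dd h_j}\in L^{p(1+\sigma)}_{\loc}(B_1)$ with bounds uniform in $j$, hence local equi-integrability of $\{\snr{\dd v_j}^p\}$ and $\{\snr{\dd h_j}^p\}$.

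The core of the argument is to pass to the limit in \eqref{136}. This cannot be done by weak convergence alone, precisely because of the $L^\infty$-norm of the test-function gradient on the right-hand side of \eqref{136}: one may only test against maps with a controlled Lipschitz constant. The device is the Lipschitz-truncation/Boccardo--Murat argument. After extending \eqref{136}, by mollification, to test functions $\varphi\in W^{1,\infty}_0(B_1,\mathbb{R}^k)$ — with $\nr{\dd\varphi}_{L^\infty(B_1)}$ in place of the right-hand side norm — one fixes a cut-off $\zeta\in C^\infty_c(B_1)$ and, for $\lambda>0$, lets $\Psi_{j,\lambda}$ be the Lipschitz truncation at level $\lambda$ of $\zeta(v_j-v)$: thus $\nr{\dd\Psi_{j,\lambda}}_{L^\infty}\le c\lambda$, $\Psi_{j,\lambda}$ agrees with $\zeta(v_j-v)$ off the bad set $O_{j,\lambda}:=\{x\in B_1\colon \mathrm{M}(\snr{\dd(\zeta(v_j-v))})(x)>\lambda\}$, with the refined weak-type bound $\lambda^p\snr{O_{j,\lambda}}\le c\int_{\{\snr{\dd(\zeta(v_j-v))}>\lambda/2\}}\snr{\dd(\zeta(v_j-v))}^p\,dx$, and $\dd\Psi_{j,\lambda}\rightharpoonup 0$ in $L^p(B_1)$ for fixed $\lambda$. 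Testing \eqref{136} for $v_j$ against $\Psi_{j,\lambda}$, inserting and subtracting the field $\snr{\dd v}^{p-2}\dd v$ and using $\dd\Psi_{j,\lambda}\rightharpoonup 0$, one obtains $\int_{B_1}\bigl(\snr{\dd v_j}^{p-2}\dd v_j-\snr{\dd v}^{p-2}\dd v\bigr)\cdot\dd\Psi_{j,\lambda}\,dx\to 0$ as $j\to\infty$ for fixed $\lambda$; splitting $B_1=(B_1\setminus O_{j,\lambda})\cup O_{j,\lambda}$, on the good set $\dd\Psi_{j,\lambda}=\zeta(\dd v_j-\dd v)+(v_j-v)\,\dd\zeta$, so the strict monotonicity of $z\mapsto\snr{z}^{p-2}z$ bounds $\int_{B_1\setminus O_{j,\lambda}}\zeta\,\snr{V(\dd v_j)-V(\dd v)}^2\,dx$ (with $V(z)=\snr{z}^{(p-2)/2}z$) from above by a lower-order term controlled by $\nr{v_j-v}_{L^p}\to 0$ plus the bad-set error, which by the weak-type bound and the uniform equi-integrability is $O(\lambda^{-\tau})$ for some $\tau>0$; letting $j\to\infty$ and then $\lambda\to\infty$ yields $\dd v_j\to\dd v$ a.e.\ in $B_1$. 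Consequently $\snr{\dd v_j}^{p-2}\dd v_j\rightharpoonup\snr{\dd v}^{p-2}\dd v$ in $L^{p/(p-1)}_{\loc}(B_1)$, and passing to the limit in \eqref{136} (whose right-hand side vanishes) shows that $v$ is $p$-harmonic in $B_1$. Since the $p$-harmonic maps $h_j$ enjoy uniform interior higher integrability, the same reasoning gives $\dd h_j\to\dd h$ a.e.\ in $B_1$ and that $h$ is $p$-harmonic in $B_1$; as moreover $h-v\in W^{1,p}_0(B_1,\mathbb{R}^k)$, uniqueness of $p$-harmonic maps with prescribed trace forces $v=h$. Hence $w_j\rightharpoonup 0$ and $\dd w_j=\dd v_j-\dd h_j\to 0$ a.e.\ in $B_1$.

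To conclude, I would observe that the family $\{\snr{\dd w_j}^{p\tilde d}\}_j$ is equi-integrable on $B_1$: for any measurable $A\subset B_1$, H\"older's inequality with exponent $1/\tilde d>1$ gives $\int_A\snr{\dd w_j}^{p\tilde d}\,dx\le\snr{A}^{1-\tilde d}\bigl(\int_{B_1}\snr{\dd w_j}^p\,dx\bigr)^{\tilde d}\le c\,\snr{A}^{1-\tilde d}$ — and this is exactly where the restriction $\tilde d<1$ enters. Since moreover $\snr{\dd w_j}^{p\tilde d}\to 0$ a.e.\ in $B_1$, Vitali's convergence theorem gives $\mint_{B_1}\snr{\dd w_j}^{p\tilde d}\,dx\to 0$, which contradicts $\bigl(\mint_{B_1}\snr{\dd w_j}^{p\tilde d}\,dx\bigr)^{1/\tilde d}>\tilde\theta$.

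I expect the main obstacle to be the Lipschitz-truncation machinery, used both for the uniform Caccioppoli/higher-integrability estimate and, above all, for passing to the limit in \eqref{136}: one has to extend \eqref{136} to Lipschitz test functions with the correct dependence on their Lipschitz constant and then — the genuinely delicate point — choose the truncation level $\lambda$ well (a stopping-time/Chebyshev selection over a dyadic range of levels, arranged so that $\lambda^p\snr{O_{j,\lambda}\setminus O_{j,\lambda/2}}$ is small) in order to control the error on the bad set. In the subquadratic range $1<p<2$ the monotonicity must be expressed through the auxiliary field $V$, namely $\snr{V(\dd v_j)-V(\dd v)}^2\simeq(1+\snr{\dd v_j}^2+\snr{\dd v}^2)^{(p-2)/2}\snr{\dd v_j-\dd v}^2$, and a further H\"older interpolation against the higher-integrability bound is needed to recover a genuine power of $\snr{\dd w_j}$. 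Finally, the alternative $\tilde B_r\in\{B_r,B_{2r}\}$ only affects constants and is absorbed by the normalization, since in both cases the left-hand side of \eqref{136} and the conclusion \eqref{har2} are computed on $B_r$, whose measure is comparable with that of $\tilde B_r$.
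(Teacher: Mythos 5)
The paper does not prove Lemma~\ref{phihar}: it is stated as a quoted result, the proof being in the cited reference \cite{diestrver}. Your contradiction/compactness argument with Lipschitz truncation is exactly the strategy of Diening, Stroffolini and Verde (specialized from the general $\varphi$-harmonic setting to $\varphi(t)=t^{p}$); the sketch is sound, including the scaling normalization, the identification of the weak limit via the good-level Lipschitz truncation (with the $V$-map reformulation when $1<p<2$), and the Vitali closure that hinges on $\tilde d<1$, so your approach coincides with the source rather than diverging from anything in this paper.
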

The next are a couple of simple inequalities which will be used several times throughout the paper. They are elementary, see e.g.: \cite{cosmin, RT1, tac}.
\begin{lemma}\label{L0}
The following inequalities hold true.
\begin{itemize}
\item[i.] For any $\varepsilon_{0}>0$, there exists a constant $c=c(\varepsilon_{0})$ such that for all $t\ge 0$, $l\ge m\ge 1$ there holds
$
\snr{t^{l}-t^{m}}\le c(l-m)\left(1+t^{(1+\varepsilon_{0})l}\right).
$
\item[ii.] For $t \in (0,1]$, consider the function $g(t):=t^{-ct^{\gamma}}$, where $c>0$ is an absolute constant and $\gamma \in (0,1]$. Then $\lim_{t\to 0}g(t)=1$ and $\sup_{t \in (0,1]}g(t)\le \exp(c/\gamma)$.
\end{itemize}
\end{lemma}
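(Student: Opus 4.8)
The plan is to prove the two items separately by elementary one-variable analysis; no deep input is required. For item i., I first note that for $t=0$ both sides vanish, since $l\ge m\ge 1$. For $t>0$ I would use the representation
\[
t^{l}-t^{m}=\int_{m}^{l}\frac{\mathrm{d}}{\mathrm{d}s}\bigl(t^{s}\bigr)\,\mathrm{d}s=\int_{m}^{l}t^{s}\log t\,\mathrm{d}s ,
\]
which gives $\snr{t^{l}-t^{m}}\le (l-m)\sup_{s\in[m,l]}t^{s}\snr{\log t}$. If $0<t\le 1$, then $t^{s}\le t$ for every $s\ge 1$, and since $\sup_{\tau\in(0,1]}\tau\snr{\log\tau}=1/e$, we get $\snr{t^{l}-t^{m}}\le (l-m)/e\le (l-m)\bigl(1+t^{(1+\varepsilon_{0})l}\bigr)$. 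If $t>1$, then $\sup_{s\in[m,l]}t^{s}=t^{l}$, and writing $\log t=\varepsilon_{0}^{-1}\log(t^{\varepsilon_{0}})\le \varepsilon_{0}^{-1}t^{\varepsilon_{0}}\le \varepsilon_{0}^{-1}t^{\varepsilon_{0}l}$ (here one uses $l\ge 1$ together with $\log s\le s$) yields $t^{l}\log t\le \varepsilon_{0}^{-1}t^{(1+\varepsilon_{0})l}$, hence $\snr{t^{l}-t^{m}}\le \varepsilon_{0}^{-1}(l-m)t^{(1+\varepsilon_{0})l}\le \varepsilon_{0}^{-1}(l-m)\bigl(1+t^{(1+\varepsilon_{0})l}\bigr)$. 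Taking $c(\varepsilon_{0}):=\max\{1,\varepsilon_{0}^{-1}\}$ then covers all three cases.

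For item ii., I would pass to logarithms: for $t\in(0,1]$ one has $\log g(t)=-ct^{\gamma}\log t=c\,t^{\gamma}\snr{\log t}$. Since $t^{\gamma}\snr{\log t}\to 0$ as $t\to 0^{+}$ for every $\gamma>0$, it follows that $\log g(t)\to 0$, i.e. $\lim_{t\to 0}g(t)=1$. For the supremum, I would maximise $h(t):=-t^{\gamma}\log t$ on $(0,1]$: one has $h(t)\to 0$ as $t\to 0^{+}$, $h(1)=0$, and $h'(t)=-t^{\gamma-1}\bigl(\gamma\log t+1\bigr)$, which vanishes only at $t_{\ast}:=e^{-1/\gamma}\in(0,1]$ and changes sign there from positive to negative; hence $\max_{(0,1]}h=h(t_{\ast})=1/(e\gamma)$. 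Consequently $\sup_{(0,1]}\log g=c/(e\gamma)\le c/\gamma$, so $\sup_{t\in(0,1]}g(t)\le \exp(c/\gamma)$, as claimed.

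The whole argument is routine; the only point that requires a little care is the case $t>1$ in item i., where the logarithmic factor $\log t$ must be absorbed into the power $t^{(1+\varepsilon_{0})l}$. This is exactly where the dependence of the constant on $\varepsilon_{0}$ enters and where the hypothesis $l\ge 1$ is used; everything else is a straightforward case check.
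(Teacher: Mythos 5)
Your proof is correct. The paper does not supply a proof of this lemma, deferring instead to the cited references, so there is no in-paper argument to compare against; your use of the integral representation $t^{l}-t^{m}=\int_{m}^{l}t^{s}\log t\,ds$, the case split at $t=1$ with the logarithm absorbed into $t^{\varepsilon_{0}l}$ (where $l\ge 1$ is used), and the elementary calculus maximization of $-t^{\gamma}\log t$ at $t=e^{-1/\gamma}$ are exactly the standard arguments one would expect.
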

We conclude this section by recalling another fundamental tool in regularity theory, which will help establishing the behavior of certain quantities.
\begin{lemma}\cite{giagiu1}\label{iter}
Let $h\colon [\varrho, R_{0}]\to \mathbb{R}$ be a non-negative, bounded function and $0<\theta<1$, $0\le A$, $0<\beta$. Assume that
$
h(r)\le A(d-r)^{-\beta}+\theta h(d),
$
for $\varrho \le r<d\le R_{0}$. Then
$
h(\varrho)\le cA/(R_{0}-\varrho)^{-\beta},
$ holds, 
where $c=c(\theta, \beta)>0$.
\end{lemma}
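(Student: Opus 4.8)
\textbf{Proof plan for Lemma \ref{iter} (the iteration lemma).}

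The plan is to prove the classical hole-filling iteration lemma by a geometric-sum argument along a sequence of radii shrinking to $\varrho$. First I would fix a parameter $\sigma \in (0,1)$, to be chosen at the end in terms of $\theta$ (the natural choice being $\sigma$ close to $1$ so that $\theta \sigma^{-\beta} < 1$, which is possible precisely because $\theta < 1$), and define an increasing sequence of radii $r_0 := \varrho$ and $r_{i+1} := r_i + (1-\sigma)\sigma^{i}(R_0 - \varrho)$. A quick computation with the geometric series shows $r_i \uparrow R_0$ as $i \to \infty$, and that $d - r =: r_{i+1} - r_i = (1-\sigma)\sigma^{i}(R_0-\varrho)$. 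I would then apply the hypothesis $h(r) \le A(d-r)^{-\beta} + \theta h(d)$ with the pair $r = r_i$, $d = r_{i+1}$, obtaining
\begin{flalign*}
h(r_i) \le \frac{A}{(1-\sigma)^{\beta}\sigma^{i\beta}(R_0-\varrho)^{\beta}} + \theta\, h(r_{i+1}).
\end{flalign*}

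Next I would iterate this inequality $\kappa$ times starting from $i = 0$: multiplying the $i$-th instance by $\theta^{i}$ and telescoping gives
\begin{flalign*}
h(\varrho) = h(r_0) \le \frac{A}{(1-\sigma)^{\beta}(R_0-\varrho)^{\beta}}\sum_{i=0}^{\kappa-1}\left(\theta\sigma^{-\beta}\right)^{i} + \theta^{\kappa}\, h(r_{\kappa}).
\end{flalign*}
Here I use that $h$ is bounded on $[\varrho, R_0]$, so $\theta^{\kappa} h(r_\kappa) \to 0$ as $\kappa \to \infty$ since $\theta < 1$. Provided $\sigma$ was chosen with $\theta \sigma^{-\beta} < 1$, the geometric series converges to $(1 - \theta\sigma^{-\beta})^{-1}$, and letting $\kappa \to \infty$ yields
\begin{flalign*}
h(\varrho) \le \frac{A}{(1-\sigma)^{\beta}\left(1 - \theta\sigma^{-\beta}\right)(R_0-\varrho)^{\beta}} = \frac{c\,A}{(R_0-\varrho)^{\beta}},
\end{flalign*}
with $c = c(\theta,\beta) = (1-\sigma)^{-\beta}(1-\theta\sigma^{-\beta})^{-1}$ depending only on $\theta$ and $\beta$ through the fixed choice of $\sigma$.

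The only genuine point requiring care — the ``main obstacle'', though it is mild — is the choice of $\sigma$: one must pick it strictly between $\theta^{1/\beta}$ and $1$ so that both $(1-\sigma)^{-\beta}$ stays finite and $\theta\sigma^{-\beta} < 1$ holds, and then verify that the resulting constant is genuinely independent of $A$, $\varrho$, $R_0$ and of the function $h$. One also needs the boundedness hypothesis on $h$ to justify killing the tail term $\theta^{\kappa}h(r_\kappa)$; without it the conclusion can fail. Everything else is a routine manipulation of geometric series. Note the statement as written has a typographical slip — the conclusion should read $h(\varrho) \le cA(R_0-\varrho)^{-\beta}$ rather than $cA/(R_0-\varrho)^{-\beta}$ — and the argument above establishes this corrected form.
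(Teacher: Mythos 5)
Your proof is correct and is the classical geometric-sum argument for this iteration lemma. The paper does not actually supply a proof — it cites the result to \cite{giagiu1} (Giaquinta--Giusti) — and the argument you give is precisely the standard one found there and in, e.g., Giusti's \emph{Direct Methods}: fix $\sigma\in(\theta^{1/\beta},1)$, run the telescoping sum along $r_i=\varrho+(R_0-\varrho)(1-\sigma^i)$, use boundedness of $h$ to discard the tail $\theta^{\kappa}h(r_\kappa)$, and sum the geometric series. You also correctly flag that the stated conclusion $h(\varrho)\le cA/(R_0-\varrho)^{-\beta}$ is a typographical slip for $h(\varrho)\le cA(R_0-\varrho)^{-\beta}$, which is the form your argument (and the original reference) establishes.
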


\subsection{Some extension results}
We report some results concerning locally Lipschitz retractions. They have been extensively used in the realm of functionals with $p$-growth, see \cite{harkinlin, harlin, hop}. For integrands exihibiting $(p,q)$-growth they were used for the first time in \cite{me}, to prove that if the Lavrentiev phenomenon does not occur in the unconstrained case, then it is absent also in presence of a geometric constraint. According to assumptions $(\m1)$-$(\m2)$, $\m\subset \RN$ is a compact, $m$-dimensional $C^{3}$ Riemannian submanifold, $\partial \m =\emptyset$ and, in particular, $\m$ is $[\gamma_{2}]-1$ connected. Let us clarify this concept.
\begin{definition}\cite{hop}
Given an integer $j\ge 0$, a manifold $\m$ is said to be $j$-connected if its first $j$ homotopy groups vanish identically, that is $\pi_{0}(\m)=\pi_{1}(\m)=\cdots=\pi_{j-1}(\m)=\pi_{j}(\m)=0$.
\end{definition}
It is reasonable to expect some good properties in terms of retractions for this kind of manifolds endowed with a relatively simple topology, as the following lemma shows.
\begin{lemma}\label{hoplem}
Let $\mathcal{M}\subset \RN$, $N\ge 3$ be a compact, $j$-connected submanifold for some integer $j \in \{1,\cdots, N-2\}$ contained in an $N$-dimensional cube $Q$. Then there exists a closed $(N-j-2)$-dimensional Lipschitz polyhedron $X\subset Q\setminus \mathcal{M}$ and a locally Lipschitz retraction $\psi\colon Q\setminus X \to \mathcal{M}$ such that for any $x \in Q\setminus X$,
$
\snr{D\psi (x)}\le c/\dist(x,X)
$ holds, 
for some positive $c=c(N,j,\mathcal{M})$.
\end{lemma}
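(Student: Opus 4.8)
The plan is to follow the classical skeleton/dual-skeleton construction used for $p$-growth problems in \cite{harkinlin,harlin} and, in the present topological generality, in \cite{hop}. First I would fix a tubular neighbourhood: since $\m$ is a compact $C^{3}$ submanifold of $\RN$ without boundary, there are $\delta_{0}>0$ and a $C^{2}$ nearest-point projection $\Pi\colon \m_{\delta_{0}}:=\{x\in\RN\colon \dist(x,\m)<\delta_{0}\}\to\m$ which restricts to the identity on $\m$ and satisfies $\snr{D\Pi}\le c(\m)$ on $\m_{\delta_{0}/2}$; this is already a Lipschitz retraction onto $\m$ near $\m$, and the whole point is to propagate it across $Q$ at the cost of deleting a thin exceptional set $X$.

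Next I would fix a cubical subdivision $\mathcal{K}$ of $Q$ of mesh $\eps>0$ so small that the closed star of every cell of $\mathcal{K}$ meeting $\m$ lies in $\m_{\delta_{0}/2}$, and let $\mathcal{K}_{\m}$ be the subcomplex generated by all cells whose closed star lies in $\m_{\delta_{0}/2}$; then $\m\subset\operatorname{int}\snr{\mathcal{K}_{\m}}$, and every cell $\sigma\notin\mathcal{K}_{\m}$ satisfies $\dist(\sigma,\m)\ge\delta_{0}/4$. Passing to the barycentric subdivision and writing $\sigma^{*}$ for the dual cell of $\sigma$ (so $\dim\sigma^{*}=N-\dim\sigma$ and $\sigma^{*}$ sits in the closed star of $\sigma$), I would set
\[
X:=\bigcup\bigl\{\,\sigma^{*}\colon \sigma\ \text{a cell of}\ \mathcal{K},\ \ \sigma\notin\mathcal{K}_{\m},\ \ \dim\sigma\ge j+2\,\bigr\},
\]
which by the dimension count is a closed Lipschitz polyhedron of dimension $N-j-2$, and which satisfies $\dist(X,\m)\ge\delta_{0}/4-c(N)\eps>0$, so $X\subset Q\setminus\m$.

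Then I would build the retraction in two stages. On the polyhedron $Y:=\snr{\mathcal{K}_{\m}}\cup\mathcal{K}^{(j+1)}$, with $\mathcal{K}^{(j+1)}$ the $(j+1)$-skeleton, I would extend $\Pi|_{\snr{\mathcal{K}_{\m}}}$ to a Lipschitz map $\Psi\colon Y\to\m$ by induction on the dimension of the cells $\sigma\notin\mathcal{K}_{\m}$: for $1\le\dim\sigma=:d\le j+1$, the boundary datum $\Psi|_{\partial\sigma}$ is a Lipschitz map of a $(d-1)$-sphere into $\m$ which is null-homotopic because $\pi_{d-1}(\m)=0$ (using $\pi_{0}(\m)=0$, i.e.\ connectedness of $\m$, when $d=1$), and therefore extends to a Lipschitz map $\sigma\to\m$ with Lipschitz constant controlled by $N$, $j$, $\m$ via quantitative Lipschitz filling of null-homotopic spheres into a compact manifold; since $\mathcal{K}$ is a fixed finite complex, $\Psi$ is globally Lipschitz on $Y$. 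Independently, $Q\setminus X$ admits a locally Lipschitz deformation retraction $\rho\colon Q\setminus X\to Y$, built cell by cell as a composition of radial projections away from the barycentres of the cells of dimension $\ge j+2$ not in $\mathcal{K}_{\m}$ (taking $\rho=\mathrm{id}$ on $\snr{\mathcal{K}_{\m}}\cup\mathcal{K}^{(j+1)}$); because a single radial projection of a $d$-cell from an interior point $b$ has gradient comparable to $\snr{x-b}^{-1}$, and composing such projections over the strata of $X$ of increasing codimension preserves this homogeneity, one gets $\snr{D\rho(x)}\le c(N,j)\,\dist(x,X)^{-1}$. The map $\psi:=\Psi\circ\rho\colon Q\setminus X\to\m$ is then locally Lipschitz, equals the identity on $\m$ (there $\rho=\mathrm{id}$ and $\Psi|_{\m}=\Pi|_{\m}=\mathrm{id}$) hence is a retraction, and satisfies
\[
\snr{D\psi(x)}\le\operatorname{Lip}(\Psi)\,\snr{D\rho(x)}\le\frac{c(N,j,\m)}{\dist(x,X)}\,,\qquad x\in Q\setminus X,
\]
as desired.

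The hard part will be the construction of $\rho$ together with its gradient estimate near the lower-dimensional strata of $X$ — the dual cells of cells of dimension strictly larger than $j+2$ — where $\rho$ is a composition of several radial projections in mutually transverse directions; there one has to set up the cone-on-cone product structure of a neighbourhood of $X$ and check that iterating these projections does not spoil the $\dist(\cdot,X)^{-1}$ homogeneity. Everything else should be routine: the tubular-neighbourhood projection, the combinatorics of dual cells and their codimensions, and the repeated use of $\pi_{0}(\m)=\dots=\pi_{j}(\m)=0$ to perform the skeletal extensions.
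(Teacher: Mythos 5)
The paper gives no proof of this lemma; it simply cites Hardt--Lin \cite[Lemma 6.1]{harlin} for the original construction and Hopper \cite[Lemma 4.5]{hop} for a simplified variant based on Lipschitz extension of maps between Riemannian manifolds. Your sketch is a faithful reconstruction of the Hardt--Lin cubical/dual-skeleton argument (tubular neighbourhood, dual $(N-j-2)$-skeleton as the exceptional set $X$, skeletal extension over the $(j+1)$-skeleton via $\pi_0(\m)=\dots=\pi_j(\m)=0$, and composed radial retractions giving the $\dist(\cdot,X)^{-1}$ gradient bound), so it is in substance the same route the paper relies on.
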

\begin{proof}
See e.g., \cite[Lemma 6.1]{harlin} for the original proof, or \cite[Lemma 4.5]{hop} for a simplified version relying on some Lipschitz extension properties of maps between Riemannian manifolds.
\end{proof}
A major technical obstruction one can face when dealing with manifold-constrained variational problems is finding comparison maps which satisfy the constraint (notice that, without further regularity details on minimizers, we cannot localize in the image). Precisely, we are no longer allowed to use convex combinations of a minimizer with a suitable cutoff function as to realize valid competitors for the problem. Hence, given any $w \in W^{1,p(\cdot)}_{\mathrm{loc}}(\Omega, \RN)$, we overcome this issue by applying Lemma \ref{hoplem} to assure a local control on the $L^{p(\cdot)}$-norm of the gradient of a suitable projected image of $w$ in terms of the $L^{p(\cdot)}$-norm of $w$ itself. This is the content of the next lemma.
\begin{lemma}[Finite energy extension.]\label{ext}
Let $\mathcal{M}$ be as in $(\mathrm{\mathcal{M}1})$-$(\mathrm{\mathcal{M}2})$ and $\U\Subset \Omega$ an open subset of $\Omega$ with Lipschitz boundary. Given $w \in W^{1,p(\cdot)}_{\mathrm{loc}}(\Omega,\RN)\cap L^{\infty}_{\mathrm{loc}}(\Omega, \RN)$ with $w(\partial \U)\subset \mathcal{M}$, there exists $\tilde{w}\in W^{1,p(\cdot)}_{w}(\U,\mathcal{M})$ satisfying
$
\int_{\U}\snr{D\tilde{w}}^{p(x)} \ dx\le c\int_{\U}\snr{Dw}^{p(x)} \ dx,
$
where $c=c(N,\mathcal{M},\gamma_{2})$.
\end{lemma}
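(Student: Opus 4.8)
The plan is to exploit the $j$-connectedness of $\m$ with $j=[\gamma_2]-1$ through Lemma \ref{hoplem}, combined with a Fubini-type averaging argument over translates of the Lipschitz polyhedron $X$, so that we can project $w$ onto $\m$ while keeping the gradient under control in a scale-invariant way. First I would embed $\m$ into an $N$-dimensional cube $Q$ and apply Lemma \ref{hoplem} with $j=[\gamma_2]-1\in\{1,\dots,N-2\}$ (the admissible range is guaranteed since $N\ge 3$ and $\gamma_2<\infty$; if $[\gamma_2]-1\ge N-2$ one replaces $j$ by $N-2$, as higher connectedness trivially implies lower connectedness). This furnishes a closed $(N-j-2)$-dimensional Lipschitz polyhedron $X\subset Q\setminus\m$ and a locally Lipschitz retraction $\psi\colon Q\setminus X\to\m$ with $\snr{D\psi(x)}\le c/\dist(x,X)$. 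Since $w\in L^\infty_{\mathrm{loc}}$, after rescaling $Q$ if necessary we may assume $w(\U)\subset Q$.

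Next I would regularize: since $w\in W^{1,p(\cdot)}_{\loc}(\Omega,\RN)\cap L^\infty_{\loc}$ and Lipschitz maps are dense (as recalled after Definition \ref{N12}, via the modification of \cite[Lemma 6]{me}), I would first carry out the construction for Lipschitz $w$ and then pass to the limit; alternatively one argues directly by approximation. For $a\in Q$ small, let $X_a:=X+a$ be a translate, and consider the composition $\psi_a(y):=\psi(y-a)$ defined on $Q\setminus X_a$. The point is that $\dim X=N-j-2$, hence for a.e.\ translation parameter $a$ in a small ball $B^N_\eta(0)$ the set $w^{-1}(X_a)$ has measure zero in $\U$ and $\psi_a\circ w$ is well-defined a.e.\ and lies in $W^{1,p(\cdot)}$; moreover on $\partial\U$ we have $w(\partial\U)\subset\m$, and for $a$ small enough $\dist(\m,X_a)\ge \dist(\m,X)/2>0$, so $\psi_a\circ w=w$ on $\partial\U$ because $\psi_a$ restricted to $\m$ is the identity (the retraction property), giving the correct boundary datum $\tilde w\in W^{1,p(\cdot)}_w(\U,\m)$.

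The key quantitative step is the averaging. By the chain rule and the gradient bound for $\psi$,
\begin{flalign*}
\int_{\U}\snr{D(\psi_a\circ w)}^{p(x)}\,dx\le c\int_{\U}\frac{\snr{Dw}^{p(x)}}{\dist(w(x),X_a)^{p(x)}}\,dx .
\end{flalign*}
Integrating this over $a\in B^N_\eta(0)$ and using Fubini together with the elementary codimension estimate
\begin{flalign*}
\int_{B^N_\eta(0)}\frac{da}{\dist(y,X+a)^{p(x)}}=\int_{B^N_\eta(0)}\frac{da}{\dist(y-a,X)^{p(x)}}\le c(N,\eta,\gamma_2),
\end{flalign*}
which is finite precisely because $p(x)\le\gamma_2<[\gamma_2]+1\le\operatorname{codim}X+1=j+3$ — wait, more carefully: $\operatorname{codim}_{\mathbb R^N}X=N-(N-j-2)=j+2=[\gamma_2]+1>\gamma_2$, so the singularity $\dist(\cdot,X)^{-p(x)}$ is locally integrable in $\mathbb R^N$ with a bound depending only on $N,\gamma_2,\eta$. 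Hence
\begin{flalign*}
\int_{B^N_\eta(0)}\left(\int_{\U}\snr{D(\psi_a\circ w)}^{p(x)}\,dx\right)da\le c\int_{\U}\snr{Dw}^{p(x)}\,dx ,
\end{flalign*}
so there exists a parameter $a_*\in B^N_\eta(0)$ for which $\tilde w:=\psi_{a_*}\circ w$ satisfies the asserted estimate with $c=c(N,\m,\gamma_2)$; note the dependence is only through $N$, $\gamma_2$ and the fixed geometric data of $\m$ (which determines $X$, $\psi$, and $\dist(\m,X)$), not on $w$.

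The main obstacle — and the place requiring care — is the interplay between the \emph{variable} exponent and the codimension of $X$: the integrability of $\dist(\cdot,X)^{-p(x)}$ near $X$ must hold uniformly, and this is exactly where the hypothesis that $\m$ is $[\gamma_2]-1$ connected (hence $\operatorname{codim}X=[\gamma_2]+1>\gamma_2\ge p(x)$) is used in an essential way; one must also ensure that all constants are genuinely independent of $w$, which forces the translation-averaging argument rather than a single fixed projection (a fixed $\psi$ could send a positive-measure set of $w$ arbitrarily close to $X$). A secondary technical point is justifying that $\psi_{a_*}\circ w\in W^{1,p(\cdot)}(\U,\RN)$ with the distributional gradient given by the chain rule: this follows by first establishing it for Lipschitz $w$ (where everything is classical away from the null set $w^{-1}(X_{a_*})$) and then passing to the limit using the density of Lipschitz maps together with the uniform energy bound just obtained, via a lower-semicontinuity/weak-compactness argument in $W^{1,p(\cdot)}$.
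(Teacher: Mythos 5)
Your construction and the paper's are the same in spirit (Fubini averaging over translates of the dual skeleton, then a good-slice selection), but there is a genuine gap at the boundary-value step, and it is exactly the step the paper is careful about. You define $\tilde w:=\psi_{a_*}\circ w$ and assert that $\psi_{a_*}\circ w=w$ on $\partial\U$ ``because $\psi_{a_*}$ restricted to $\m$ is the identity (the retraction property).'' That is false: the retraction property says $\psi|_{\m}=\mathrm{id}$, not $\psi_{a}|_{\m}=\mathrm{id}$. For $y\in\m$, $\psi_{a}(y)=\psi(y-a)$, and $y-a\notin\m$ in general, so $\psi(y-a)$ is some point of $\m$ that is only \emph{close} to $y$ as $a\to 0$. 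Hence $\psi_{a_*}\circ w$ lands in $\m$ but does not lie in $W^{1,p(\cdot)}_w(\U,\m)$, i.e.\ it has the wrong trace on $\partial\U$, which is precisely the membership you need.

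The paper fixes this with an extra composition: it sets $\tilde w:=(\left.P_{\tilde a}\right|_{\m})^{-1}\circ P_{\tilde a}\circ w$, which does restore the boundary datum. For this to make sense one needs $P_{\tilde a}|_{\m}:\m\to\m$ to be bi-Lipschitz (so the inverse exists and the energy estimate survives the extra composition with a controlled constant). The paper arranges this by not applying Lemma \ref{hoplem} directly to $\m$; instead it retracts $Q\setminus X$ onto a tubular neighbourhood $\overline V$ of $\m$ (on which $\psi$ is the identity) and then composes with the nearest-point projection $\Pi$, so that $P:=\Pi\circ\psi$ satisfies $P(b)=\Pi(b)$ for $b$ near $\m$. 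Consequently $P_{a}|_{\m}(y)=\Pi(y-a)$ is a smooth, small perturbation of the identity for $|a|$ small, hence a diffeomorphism of $\m$ with Lipschitz inverse, and the boundary correction costs only a fixed Lipschitz factor. With your choice of a retraction $\psi:Q\setminus X\to\m$ directly, the analogous claim that $\psi_{a}|_{\m}$ is bi-Lipschitz with controlled inverse is less transparent ($\psi$ is only locally Lipschitz and its behaviour near $\m$ is not pinned down), which is likely why the paper routes through $\overline V$. The rest of your argument --- the codimension count $\operatorname{codim}X=[\gamma_2]+1>\gamma_2\ge p(x)$ giving uniform local integrability of $\dist(\cdot,X)^{-p(x)}$, the Fubini/Markov selection of a good translate, and the remark that one must average rather than fix a single retraction --- matches the paper and is correct.
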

\begin{proof}
Following \cite[Section 2.2]{hop}, we define $\mbox{Unp}(\m)$ as the set of all $x \in \RN$ for which there exists a unique point of $\m$ nearest to $x$ and, for $a\in \m$, we denote by $\mbox{reach}(\m,a)$ the supremum of the set of all numbers $r>0$ for which $\{x\in \RN\colon \snr{x-a}<r\}\subset \mbox{Unp}(A)$. Then, we can set $\mbox{reach}(\m):=\inf_{a\in \m}\mbox{Reach}(\m,a)$. Notice that, by assumptions $(\mathrm{\mathcal{M}1})$-$(\mathrm{\mathcal{M}2})$, $\mbox{reach}(\m)>0$, see \cite{fed,harlin,hop}. Now, if for some $0<\sigma <\reach(\mathcal{M})$, $V:=\left\{a \in \RN \colon \dist(a,\mathcal{M})<\sigma\right\}$ is a neighbourhood with the nearest point property, then the metric projection $\Pi\colon \overline{V}\to \mathcal{M}$ associating to any $a\in \overline{V}$ the unique $a_{0}\in \mathcal{M}$ such that $\dist(a,\m)=\snr{a-a_{0}}$, is Lipschitz continuous and $\overline{V}$ and $\mathcal{M}$ are homotopy equivalent spaces with $\pi_{i}(\overline{V})=\pi_{i}(\mathcal{M})$ for all $i \in \{0,\cdots, [\gamma_{2}]-1\}$, see e.g.: \cite[Proposition 1.17]{hat} for more details on this matter. Since $\mathcal{M}$ is compact and $w$ is bounded, there exists an $N$-dimensional cube $Q$ such that $\mathcal{M}\subset \overline{V}\subset Q$ and $\dist(w,\mathcal{M})\le \frac{1}{2}\dist (\mathcal{M},\partial Q)$ almost everywhere. By Lemma \ref{hoplem} with $j\equiv[\gamma_{2}]-1$, there exists a locally Lipschitz retraction $\psi\colon Q\setminus X \to \overline{V}$ for some $(N-[\gamma_{2}]-1)$-dimensional Lipschitz polyhedron $X\subset Q\setminus \overline{V}$, which, by construction stands strictly away from $\mathcal{M}$. Thus we have a map $P:=\Pi \circ \psi\colon Q\setminus X\to \mathcal{M}$,
satisfying 
\begin{flalign}\label{0}
\snr{\nabla P(a)}\le \frac{c}{\dist(a,X)},
\end{flalign}
for $c=c(N,\m)$. By a change of variables, the definition of the dual skeleton, the fact that $\mathcal{M}$ is $([\gamma_{2}]-1)$-connected and $\dim(X)\le N-[\gamma_{2}]-1$, there holds:
\begin{flalign}\label{1}
\int_{Q}\frac{1}{\dist(a,X)^{p(x)}} \ da\le \int_{Q}\left(1+\frac{1}{\dist(a,X)^{\gamma_{2}}}\right) \ da<c,
\end{flalign}
for a finite, positive constant $c=c(N,\mathcal{M},\gamma_{2})$. Now, for a sufficiently small $0<\varrho<\min\left\{\frac{\sigma}{2},\frac{\dist(\mathcal{M},\partial Q)}{2}\right\}$ and a point $a \in B^{N}_{\varrho}$, denote the translations $Q_{a}:=\left\{ b+a\colon \ b\in Q \right\}$ and $X_{a}:=\left\{b+a\colon \ b \in X\right\}$, so that one can define the retraction $P_{a}\colon Q_{a}\setminus X_{a}\to \mathcal{M}$ given by $P_{a}(b):=P(b-a)$. Then, by the chain rule, Fubini's theorem, \eqref{0} and \eqref{1} we obtain
\begin{flalign}\label{3}
\int_{B_{\varrho}^{N}}\int_{\U}\snr{D(P_{a}(w))}^{p(x)} \ dxda\le &\int_{\U}\snr{Dw}^{p(x)}\left(\int_{B_{\varrho}^{N}}\snr{\nabla P(w-a)}^{p(x)} \ da\right) \ dx\nonumber \\
\le &\int_{\U}\snr{Dw}^{p(x)}\left(\int_{Q}\snr{\nabla P(b)}^{p(x)} \ db\right) \ dx\le c\int_{\U}\snr{Dw}^{p(x)} \ dx,
\end{flalign}
where $c=c(N,\mathcal{M},\gamma_{2})$. Estimate \eqref{3} and Markov's inequality then render the existence of a positive $c=c(N,\mathcal{M},\gamma_{2})$ and a $\tilde{a}\in B^{N}_{\varrho}$ so that
\begin{flalign}\label{4}
\int_{\U}\snr{D(P_{\tilde{a}}(w))}^{p(x)} \ dx \le c\int_{\U}\snr{Dw}^{p(x)} \ dx,
\end{flalign}
where again $c=c(N,\mathcal{M},\gamma_{2})$.
Since $w(\partial \U)\subset \mathcal{M}$, the map $\tilde{w}:=(\left.P_{\tilde{a}}\right |_{\mathcal{M}})^{-1}\circ P_{\tilde{a}}\circ w$
is well defined and given that the inverse map $P_{\tilde{a}}^{-1}$ is Lipschitz on $\mathcal{M}$, from \eqref{4} we conclude that
$
\int_{\U}\snr{D\tilde{w}}^{p(x)} \ dx \le c\int_{\U}\snr{Dw}^{p(x)} \ dx,
$
with $c=c(N,\mathcal{M},\gamma_{2})$. Moreover, since $w(\partial U)\subset \m$, by construction we have that $\left.\tilde{w}\right |_{\partial U}=\left.w\right |_{\partial U}$ and this concludes the proof.
\end{proof}
Lemma \ref{ext} will be particularly helpful when $\U$ is a ball $B_{r}$ or an annulus $B_{r}\setminus B_{\varrho}$ for a  proper choice of $r$ and $\varrho$.

\section{Partial regularity}
In this section we first collect a couple of essential inequalities, some basic regularity results stemming only from the minimality condition and then carry out the proof of Theorem \ref{T0}.
\subsection{Basic regularity results}
The first result is Poincar\'e's inequality, well known in the unconstrained case, see \cite[Theorem 3.1]{ele}, and since it is valid for any map $w \in W^{1,p(\cdot)}_{\mathrm{loc}}(\Omega, \RN)$, it transfers verbatim for functions in $W^{1,p(\cdot)}_{\mathrm{loc}}(\Omega, \mathcal{M})$. However, given that we are dealing with bounded maps ($\m$ is compact), we present a simplified proof, including also the case in which the domain is an annulus $A_{r\theta}:=B_{r}\setminus B_{r(1-\theta)}$ for some $0<\theta<1$.
\begin{lemma}[Poincar\'e's inequality]\label{poi}
Let $w \in L^{\infty}_{\mathrm{loc}}(\Omega,\RN)\cap W^{1,p(\cdot)}_{\mathrm{loc}}(\Omega,\RN)$, with $p(\cdot)$ satisfying $\mbox{(P1)}$-$\mbox{(P2)}$ and $B_{r}\Subset \Omega$, $0<r\le 1$. Then, there holds
\begin{flalign}\label{13}
\int_{B_{r}}\left | \frac{w-(w)_{r}}{r}\right |^{p(x)} \ dx\le c\left(\int_{B_{r}}\snr{Dw}^{p(x)} \ dx +\snr{B_{r}}\right),
\end{flalign}
with $c=c(n,N,\gamma_{1},\gamma_{2},[p]_{0,\alpha},\alpha,\nr{w}_{L^{\infty}(B_{r})})$. Furthermore, if for some $0<\theta<1$, $w \in L^{\infty}(A_{r\theta},\RN)\cap W^{1,p(\cdot)}(A_{r\theta},\RN)$ is such that $\left.w\right |_{\partial B_{r}}=0$, then
\begin{flalign}\label{14}
\int_{A_{r\theta}}\snr{w/(r\theta)}^{p(x)} \ dx \le c\left(\int_{A_{r\theta}}\snr{Dw}^{p(x)} \ dx+\snr{A_{\theta}}\right),
\end{flalign}
for $c=c(n,N,\gamma_{1},\gamma_{2},[p]_{0,\alpha},\alpha,\nr{w}_{L^{\infty}(A_{r\theta})})$.
\end{lemma}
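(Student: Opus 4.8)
The plan is to derive both variable-exponent Poincar\'e inequalities from the classical constant-exponent one by \emph{freezing} the exponent at $p_{1}(r):=\inf_{B_{r}}p$ and then transferring back to $p(\cdot)$. The boundedness of $w$ (which is the only way compactness of $\m$ enters in the applications) together with the H\"older continuity of $p(\cdot)$ and Lemma~\ref{L0}(ii) is exactly what makes this transfer affordable.

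For \eqref{13} I would first set $p_{1}:=p_{1}(r)$, $p_{2}:=p_{2}(r)$ and $M_{0}:=2\nr{w}_{L^{\infty}(B_{r})}$, so that $\snr{w-(w)_{r}}\le M_{0}$ a.e.\ in $B_{r}$. Applying the Poincar\'e--Wirtinger inequality componentwise at the constant exponent $p_{1}\in(1,\gamma_{2}]$ — with a constant that can be taken independent of $p_{1}$, e.g.\ through the Riesz-potential representation of $w-(w)_{r}$ — gives
\[
\int_{B_{r}}\left|\frac{w-(w)_{r}}{r}\right|^{p_{1}}dx\le c(n,N)\int_{B_{r}}\snr{Dw}^{p_{1}}dx\le c(n,N)\left(\int_{B_{r}}\snr{Dw}^{p(x)}dx+\snr{B_{r}}\right),
\]
where the last inequality uses $p_{1}\le p(x)$, hence $\snr{Dw}^{p_{1}}\le 1+\snr{Dw}^{p(x)}$. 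It then remains to replace $p_{1}$ by $p(\cdot)$ on the left: on $\{\snr{(w-(w)_{r})/r}\le 1\}$ the integrand with exponent $p(x)$ is $\le 1$ and contributes $\le\snr{B_{r}}$, while on the complement, since the base exceeds $1$ and $p(x)\ge p_{1}$,
\[
\left|\frac{w-(w)_{r}}{r}\right|^{p(x)}=\left|\frac{w-(w)_{r}}{r}\right|^{p_{1}}\left|\frac{w-(w)_{r}}{r}\right|^{p(x)-p_{1}}\le\left|\frac{w-(w)_{r}}{r}\right|^{p_{1}}\left(\frac{M_{0}}{r}\right)^{p_{2}-p_{1}}.
\]
Since $0\le p_{2}-p_{1}\le[p]_{0,\alpha}(2r)^{\alpha}\le 2^{\alpha}[p]_{0,\alpha}\,r^{\alpha}$ (recall $r\le 1$), one has $M_{0}^{p_{2}-p_{1}}\le\max\{1,M_{0}^{2^{\alpha}[p]_{0,\alpha}}\}$ and, by Lemma~\ref{L0}(ii) applied with $c=2^{\alpha}[p]_{0,\alpha}$ and $\gamma=\alpha$, $r^{-(p_{2}-p_{1})}\le r^{-2^{\alpha}[p]_{0,\alpha}r^{\alpha}}\le\exp(2^{\alpha}[p]_{0,\alpha}/\alpha)$. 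Collecting the estimates yields \eqref{13} with $c=c(n,N,\gamma_{1},\gamma_{2},[p]_{0,\alpha},\alpha,\nr{w}_{L^{\infty}(B_{r})})$.

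For \eqref{14} I would run the same scheme, the only new ingredient being the constant-exponent Poincar\'e inequality on the annulus for maps vanishing on $\partial B_{r}$: for every $q\ge 1$,
\[
\int_{A_{r\theta}}\snr{w/(r\theta)}^{q}dx\le c(n,N)\int_{A_{r\theta}}\snr{Dw}^{q}dx.
\]
This I would get by writing, for a.e.\ $x\in A_{r\theta}$, $w(x)=-\int_{0}^{r-\snr{x}}\nabla w\!\left(\frac{\snr{x}+t}{\snr{x}}\,x\right)\!\cdot\frac{x}{\snr{x}}\,dt$ (using $w=0$ on $\partial B_{r}$ and $r-\snr{x}\le r\theta$), then H\"older's inequality in $t$, Fubini, and for each fixed $t$ the radial change of variables $x\mapsto\frac{\snr{x}+t}{\snr{x}}\,x$, whose Jacobian equals $(1+t/\snr{x})^{n-1}\ge 1$ on $A_{r\theta}$ — so the estimate does not degenerate as $\theta\to 1$. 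Taking $q=p_{1}:=\inf_{A_{r\theta}}p$ and repeating verbatim the argument above (splitting according to whether $\snr{w/(r\theta)}$ is $\le 1$ or $>1$; using $p_{1}\le p(x)$ on the gradient side and $\snr{A_{r\theta}}=r^{n}\snr{A_{\theta}}\le\snr{A_{\theta}}$; controlling $(M/(r\theta))^{p_{2}-p_{1}}$ with $M:=\nr{w}_{L^{\infty}(A_{r\theta})}$, $p_{2}:=\sup_{A_{r\theta}}p$, via $p_{2}-p_{1}\le[p]_{0,\alpha}(2r)^{\alpha}$ and Lemma~\ref{L0}(ii) with $t=r\theta$) produces \eqref{14}. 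Here I would note that, since the oscillation of $p$ on $A_{r\theta}$ is controlled only by its full diameter $2r$ while the normalising scale is the thickness $r\theta$, the constant coming from Lemma~\ref{L0}(ii) depends a priori also on $\theta$; this is harmless, $\theta$ being a fixed fraction in all the applications.

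The hard part, in both cases, is precisely the passage from the frozen exponent $p_{1}$ back to $p(\cdot)$: one must make sure that the ``exponent-gap'' factor $(M_{0}/r)^{p_{2}-p_{1}}$, respectively $(M/(r\theta))^{p_{2}-p_{1}}$, stays bounded. This is exactly the step that would fail without the $L^{\infty}$-bound on $w$ (it is a Lavrentiev-type gap), and it is where the H\"older continuity of $p(\cdot)$ enters decisively through Lemma~\ref{L0}(ii). The only other point requiring some care is the non-degenerate radial change of variables used to keep the constant-exponent annulus-Poincar\'e constant independent of $\theta$.
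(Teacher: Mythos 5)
Your proposal is correct and follows essentially the same route as the paper's own proof: freeze the exponent at $p_{1}(r)$ (resp.\ $p_{1}(r\theta)$), apply the classical constant-exponent Poincar\'e inequality, and transfer back to the variable exponent using the $L^{\infty}$-bound on $w$ together with Lemma~\ref{L0}(\emph{ii.}) to control the factor $r^{p_{1}(r)-p_{2}(r)}$. Your observation that the constant in \eqref{14} depends a priori also on $\theta$ — because the oscillation of $p(\cdot)$ over $A_{r\theta}$ is governed by the full diameter $2r$ while the normalising scale is the thickness $r\theta$ — is a valid and careful point that the paper's statement does not make explicit, although as you note it is harmless since $\theta$ is a fixed fraction in the applications.
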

\begin{proof}
Fix $B_{r}\Subset \Omega$, $0<r\le 1$. From assumptions $\mbox{(P1)}$-$\mbox{(P2)}$, Lemma \ref{L0} (\textit{ii.}), \eqref{p1p2} and the standard Poincar\'e's inequality holding for $p\equiv p_{1}(r)$ we obtain
\begin{flalign*}
\int_{B_{r}}\left|\frac{w-(w)_{r}}{r}\right |^{p(x)} \ dx\le &cr^{p_{1}(r)-p_{2}(r)}\max\left\{1,2\nr{w}_{L^{\infty}(B_{r})}\right\}^{\gamma_{2}-\gamma_{1}}\int_{B_{r}}\left |\frac{w-(w)_{r}}{r} \right |^{p_{1}(r)} \ dx\\
\le &c\int_{B_{r}}\snr{Dw}^{p_{1}(r)} \ dx\le c\int_{B_{r}}(\snr{Du}^{p(x)}+1) \ dx,
\end{flalign*}
with $c=c(n,N,\gamma_{1},\gamma_{2},[p]_{0,\alpha},\alpha,\nr{w}_{L^{\infty}(B_{r})})$. In the same way, for $w\in L^{\infty}(A_{r\theta},\RN)\cap W^{1,p(\cdot)}(A_{r\theta},\RN)$ such that $\left.w\right|_{\partial B_{r}}=0$, we have
\begin{flalign*}
\int_{A_{r\theta}}\snr{w/(r\theta)}^{p(x)} \ dx\le c(r\theta)^{p_{1}(r\theta)-p_{2}(r\theta)}\int_{A_{r\theta}}\snr{Dw}^{p_{1}(r\theta)} \ dx\le c\int_{A_{r\theta}}(\snr{Dw}^{p(x)}+1)
 \ dx,\end{flalign*}
for $c(n,N,\gamma_{1},\gamma_{2},[p]_{0,\alpha},\alpha,\nr{w}_{L^{\infty}(A_{r\theta})})$. Here we denoted $p_{1}(r\theta):=\inf_{x\in A_{r\theta}}p(x)$ and $p_{2}(r\theta):=\sup_{x\in A_{r\theta}}p(x)$.
\end{proof}
As to successfully implement Lemma \ref{phihar}, we also need an intrinsic version of Sobolev-Poincar\'e's inequality.
\begin{lemma}[Intrinsic Sobolev-Poincar\'e's inequality]\label{inpoi}
Let $p\in (1,\infty)$ and $w \in W^{1,p}_{\mathrm{loc}}(\Omega,\RN)$. Then, there exist a positive $c=c(n,N,p)$ and exponents $d_{1}>1$ and $0<d_{2}<1$ such that
\begin{flalign*}
\left(\mint_{B_{r}}\left |\frac{w-(w)_{r}}{r} \right |^{pd_{1}} \ dx\right)^{\frac{1}{d_{1}}}\le c\left(\mint_{B_{r}}\snr{Dw}^{pd_{2}} \ dx\right)^{\frac{1}{d_{2}}}
\end{flalign*}
holds whenever $B_{r}\Subset \Omega$ is such that $0<r \le 1$. Here, $d_{1}=d_{1}(n,N,p)$ and $d_{2}=d_{2}(n,N,p)$.
\end{lemma}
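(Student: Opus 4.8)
The plan is to obtain the intrinsic inequality from the classical scale-invariant Sobolev--Poincar\'e inequality applied with a sub-exponent $q<p$ whose Sobolev conjugate $q^{*}=nq/(n-q)$ is strictly larger than $p$. The condition $d_{1}>1$ will then encode precisely the gain $q^{*}>p$, while $0<d_{2}<1$ will encode $q<p$; the powers of $r$ will cancel out.

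First I would fix the exponents. For every $n\ge 2$ and $p>1$ one has $\tfrac{np}{n+p}<\min\{n,p\}$ (indeed $\tfrac{np}{n+p}<p$ since $n<n+p$, and $\tfrac{np}{n+p}<n$ since $p<n+p$) and $1<\min\{n,p\}$, so the open interval $\bigl(\max\{1,\tfrac{np}{n+p}\},\,\min\{n,p\}\bigr)$ is non-empty; I pick any $q$ in it. Then $1<q<n$, the conjugate $q^{*}=\tfrac{nq}{n-q}$ is well defined, and $q>\tfrac{np}{n+p}$ is equivalent to $q^{*}>p$. Setting $d_{2}:=q/p$ and $d_{1}:=q^{*}/p$ we get $0<d_{2}<1<d_{1}$, $pd_{2}=q$, $pd_{1}=q^{*}$, and all of $q,d_{1},d_{2}$ depend only on $n$ and $p$.

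Next, since $B_{r}\Subset\Omega$ and $q<p$, we have $w\in W^{1,p}(B_{r},\RN)\subset W^{1,q}(B_{r},\RN)$, so the vector-valued Sobolev--Poincar\'e inequality with exponent $q\in(1,n)$ on $B_{r}$ applies and reads, in scale-invariant form,
\begin{flalign*}
\left(\mint_{B_{r}}\snr{w-(w)_{r}}^{q^{*}} \ dx\right)^{\frac{1}{q^{*}}}\le c(n,N,q)\,r\left(\mint_{B_{r}}\snr{Dw}^{q} \ dx\right)^{\frac{1}{q}}.
\end{flalign*}
Raising both sides to the power $p$ and using $pd_{1}=q^{*}$, $pd_{2}=q$, the left-hand side becomes $r^{p}\bigl(\mint_{B_{r}}\snr{(w-(w)_{r})/r}^{pd_{1}}\,dx\bigr)^{1/d_{1}}$ and the right-hand side becomes $c(n,N,q)^{p}\,r^{p}\bigl(\mint_{B_{r}}\snr{Dw}^{pd_{2}}\,dx\bigr)^{1/d_{2}}$; cancelling the common factor $r^{p}$ gives exactly the claimed estimate, with $c=c(n,N,q)^{p}=c(n,N,p)$ because $q=q(n,p)$.

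The argument is essentially elementary; the only place asking for (routine) care is checking non-emptiness of the admissible range of $q$ in all regimes, notably $p$ close to $1$ and $p\ge n$ (in the latter case $\min\{n,p\}=n$ and one picks $q$ strictly below $n$), together with the bookkeeping of the powers of $r$, which incidentally shows that the hypothesis $0<r\le 1$ is not actually needed here. One should also recall that the vector-valued Sobolev--Poincar\'e constant depends only on $n,N,q$, as it reduces to the scalar inequality applied to $\snr{w-(w)_{r}}$; this is what makes the final constant depend on $n,N,p$ alone.
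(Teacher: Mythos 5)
Your proof is correct, and it handles the two regimes in a slightly different way than the paper. For $1<p\le n$ the two arguments are essentially identical: the paper picks $\gamma\in\bigl(\max\{1/p,\,n/(n+p)\},1\bigr)$ and applies Sobolev--Poincar\'e with exponent $\gamma p$, which is precisely your choice $q=\gamma p\in\bigl(\max\{1,np/(n+p)\},\min\{n,p\}\bigr)$ with $d_{1}=q^{*}/p$, $d_{2}=q/p$. The divergence is in the supercritical regime $p>n$: the paper treats it as a separate case, choosing $\gamma\in(n/p,1)$ so that $\gamma p>n$, and then invokes Morrey's embedding to pass from a H\"older seminorm bound back to a gradient average; your argument instead continues to use Sobolev--Poincar\'e by forcing $q<n$ (so $q^{*}<\infty$), which unifies the two cases in a single formula. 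Both routes give $c=c(n,N,p)$, and your observation that the scale-invariant form renders the hypothesis $r\le 1$ superfluous is correct (the paper's proof never uses it either). The only slightly imprecise remark is that the vector-valued constant "reduces to the scalar inequality applied to $\snr{w-(w)_{r}}$": this isn't literally how one passes from scalar to vector (since $(\snr{w-(w)_{r}})_{r}\ne 0$ in general); the standard reduction is componentwise, which is how the dependence on $N$ enters, but this is cosmetic and does not affect the validity of the argument.
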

\begin{proof}
We start by considering the case $1<p\le n$. Fix any $\gamma \in \left(\max\left\{\frac{1}{p},\frac{n}{n+p}\right\},1\right)$ and notice that $w \in W^{1,\gamma p}_{\mathrm{loc}}(\Omega,\RN)$ for all such $\gamma$. From the standard Sobolev-Poincar\'e's inequality we obtain
\begin{flalign*}
\left(\mint_{B_{r}}\left |\frac{w-(w)_{r}}{r} \right |^{\frac{n\gamma p}{n-\gamma p}} \ dx\right)^{\frac{n-\gamma p}{n\gamma p}}\le c\left(\mint_{B_{r}}\snr{Dw}^{\gamma p} \ dx \right)^{\frac{1}{\gamma p}},
\end{flalign*}
for $c=c(n,N,p,\gamma)$, but, being $\gamma$ ultimately influenced only by $n$ and $p$, we can conclude that $c=c(n,N,p)$. Choosing $d_{1}:=\frac{n\gamma}{n-\gamma p}>1$ since $\gamma>\frac{n}{n+p}$, and $d_{2}:=\gamma<1$ we obtain the thesis. Now, if $p>n$, then there exists $\gamma\in \left(n/p,1\right)$ so that $p\gamma>n$. Let $\kappa:=1-n/(\gamma p)$. From Morrey's embedding theorem we then have
\begin{flalign*}
\left(\mint_{B_{r}}\left |\frac{w-(w_{r})}{r} \right |^{\frac{p}{\gamma}} \ dx\right)^{\frac{\gamma}{p}}\le c[w]_{0,\kappa;B_{r}}r^{\kappa-1}\le c\left(\mint_{B_{r}}\snr{Dw}^{p\gamma} \ dx\right)^{\frac{1}{p\gamma}},
\end{flalign*}
for $c=c(n,N,p)$. Fixing $d_{1}:=\gamma^{-1}$ and $d_{2}:=\gamma$ we can conclude.
\end{proof}
\begin{remark}\label{r0}
\emph{Since $\mathcal{M}$ is compact, for a function $w$ taking values in $\mathcal{M}$ the dependence of the constants appearing in the inequalities in Lemma \ref{poi} on the $L^{\infty}$-norm of $w$ will be expressed as a dependence on $\mathcal{M}$.}
\end{remark}
In the following lemma, we present a Caccioppoli-type inequality, which is fundamental for regularity.
\begin{lemma}[Caccioppoli-type inequality]\label{cacc}
Let $u \in W^{1,p(\cdot)}(\Omega,\m)$ be a constrained local minimizers of \eqref{cvp}. Then, for any ball $B_{r}\Subset \Omega$ there holds
\begin{flalign*}
\mint_{B_{r/2}}\snr{Du}^{p(x)} \ dx \le c\mint_{B_{r}}\left |\frac{u-(u)_{r}}{r} \right|^{p(x)} \ dx,
\end{flalign*}
for $c=c(n,N,\m,\lambda,\Lambda,\gamma_{1},\gamma_{2},[p]_{0,\alpha},\alpha)$.
\end{lemma}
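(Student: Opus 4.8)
The plan is to use the minimality of $u$ against a suitable competitor built by a cut-off argument, but since we cannot freely form convex combinations (the constraint must be respected), the finite energy extension of Lemma \ref{ext} will do the job. Concretely, fix $B_r \Subset \Omega$ and radii $r/2 \le s < t \le r$. Let $\zeta \in C^\infty_c(B_t)$ be a standard cut-off with $\zeta \equiv 1$ on $B_s$, $0 \le \zeta \le 1$ and $\snr{D\zeta}\le c/(t-s)$. Consider the map $w := u - \zeta(u - (u)_r)$, which equals $(u)_r$ on $B_s$ and equals $u$ near $\partial B_t$; it lies in $W^{1,p(\cdot)}$ but need not take values in $\m$. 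Since $w = u$ on $\partial B_t$ (so $w(\partial B_t)\subset \m$) and $w$ is bounded, Lemma \ref{ext} applied on $\U = B_t$ produces $\tilde w \in W^{1,p(\cdot)}_u(B_t,\m)$ with
\begin{flalign*}
\int_{B_t}\snr{D\tilde w}^{p(x)}\ dx \le c\int_{B_t}\snr{Dw}^{p(x)}\ dx,
\end{flalign*}
$c = c(N,\m,\gamma_2)$. This $\tilde w$ is an admissible competitor for $u$ on $B_t$.

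Next I would exploit minimality: $\int_{B_t} k(x)\snr{Du}^{p(x)}\ dx \le \int_{B_t}k(x)\snr{D\tilde w}^{p(x)}\ dx$. Using $(\mathrm{K2})$ to absorb $k(\cdot)$ into constants and the estimate above, this gives $\int_{B_t}\snr{Du}^{p(x)}\ dx \le c\int_{B_t}\snr{Dw}^{p(x)}\ dx$ for $c = c(\lambda,\Lambda,N,\m,\gamma_2)$. Now $Dw = (1-\zeta)Du - D\zeta\otimes(u-(u)_r)$ on $B_t$, and $Dw = Du$ on $B_t\setminus B_t$... more precisely $Dw$ is supported so that on $B_t\setminus\supp\zeta$ one has $Dw = Du$, while on the transition region the two terms compete. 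Splitting the integral, using the elementary convexity-type bound $\snr{a+b}^{p(x)}\le c(\snr{a}^{p(x)}+\snr{b}^{p(x)})$ with $c = c(\gamma_2)$, and the bound on $\snr{D\zeta}$, I obtain
\begin{flalign*}
\int_{B_t}\snr{Du}^{p(x)}\ dx \le c\int_{B_t\setminus B_s}\snr{Du}^{p(x)}\ dx + c\int_{B_t}\left|\frac{u-(u)_r}{t-s}\right|^{p(x)}\ dx.
\end{flalign*}
Then the standard "hole-filling" trick applies: add $c\int_{B_s}\snr{Du}^{p(x)}\ dx$ to both sides, divide by $c+1$, to get $\int_{B_s}\snr{Du}^{p(x)}\ dx \le \theta\int_{B_t}\snr{Du}^{p(x)}\ dx + c\int_{B_t}\snr{(u-(u)_r)/(t-s)}^{p(x)}\ dx$ with $\theta = c/(c+1) < 1$. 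Finally, one feeds this into the iteration Lemma \ref{iter} with $h(\varrho)=\int_{B_\varrho}\snr{Du}^{p(x)}\ dx$ over $[r/2,r]$ to remove the $\theta$-term and conclude $\int_{B_{r/2}}\snr{Du}^{p(x)}\ dx \le c\int_{B_r}\snr{(u-(u)_r)/r}^{p(x)}\ dx$; dividing by $\snr{B_r}$ (comparable to $\snr{B_{r/2}}$) gives the averaged form in the statement.

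The two points requiring care — and where the main obstacle lies — are: first, making sure the competitor $\tilde w$ genuinely has finite $p(\cdot)$-energy and the right boundary values, which is exactly what Lemma \ref{ext} delivers, but one must check its hypotheses (bounded map, boundary trace into $\m$, Lipschitz domain $B_t$) are met; and second, handling the variable exponent when splitting $\snr{a+b}^{p(x)}$ and when passing between $t-s$ and $r$ — here the constants coming from $\snr{D\zeta}^{p(x)}$ produce factors like $(t-s)^{-p(x)}$ that range between $(t-s)^{-\gamma_1}$ and $(t-s)^{-\gamma_2}$, and one invokes Lemma \ref{L0} (together with $0<r\le 1$, $t-s\le 1$) to keep all such factors under control by a single constant depending only on $\texttt{data}$. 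The $L^\infty$-dependence of intermediate constants is absorbed into $\m$ by compactness, as in Remark \ref{r0}.
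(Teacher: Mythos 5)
Your proof is correct and follows essentially the same route as the paper: cut-off $w=u-\zeta(u-(u)_r)$, pass to a genuine $\m$-valued competitor via Lemma \ref{ext}, invoke minimality together with $(\mathrm{K2})$, split $Dw$ and absorb the $(t-s)^{-p(x)}$ and $r^{-p(x)}$ factors using the H\"older continuity of $p(\cdot)$ and Lemma \ref{L0} (\textit{ii.}), and close with hole-filling plus Lemma \ref{iter}. The only cosmetic difference is that the paper keeps $\int_{B_s}\snr{Du}^{p(x)}\,dx$ (rather than $\int_{B_t}$) on the left of the minimality inequality, which makes the hole-filling step a touch more direct, but your version works equally well after observing $\int_{B_s}\le\int_{B_t}$.
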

\begin{proof}
With $0<r/2\le s<t\le r\le 1$ we determine a cutoff function $\eta \in C^{1}_{c}(B_{r})$ such that $\chi_{B_{s}}\le \eta\le \chi_{B_{t}}$ and $\snr{D\eta}\le 2(t-s)^{-1}$ and define the map $w:=u-\eta(u-(u)_{r})$. By construction, $w \in W^{1,p(\cdot)}(B_{t},\RN)\cap L^{\infty}(B_{t},\RN)$ and $\left.w\right|_{\partial B_{t}}=\left.u\right|_{\partial B_{t}}$, so Lemma \ref{ext} renders a map $\tilde{w}\in W^{1,p(\cdot)}_{u}(B_{t},\m)$ which is an admissible competitor for $u$ in problem \eqref{cvp} and satisfies
\begin{flalign}\label{wfe}
\int_{B_{t}}\snr{D\tilde{w}}^{p(x)} \ dx \le c\int_{B_{t}}\snr{Dw}^{p(x)} \ dx \le c\left(\int_{B_{t}\setminus B_{s}}\snr{Du}^{p(x)} \ dx +\int_{B_{r}}\left |\frac{u-(u)_{r}}{t-s}\right|^{p(x)} \ dx\right),
\end{flalign}
for $c=c(N,\m,\gamma_{1},\gamma_{2})$. The minimality of $u$, $\mbox{(K2)}$, the features of $\eta$, \eqref{wfe} and \eqref{p1p2} give
\begin{flalign*}
\int_{B_{s}}\snr{Du}^{p(x)} \ dx\le &\frac{\Lambda}{\lambda}\int_{B_{t}}\snr{D\tilde{w}}^{p(x)} \ dx \\
\le &c\int_{B_{t}\setminus B_{s}}\snr{Du}^{p(x)} \ dx +c\int_{B_{r}}\left | \frac{u-(u)_{r}}{t-s}\right |^{p(x)} \ dx\\
\le& c\int_{B_{t}\setminus B_{s}}\snr{Du}^{p(x)} \ dx +c(t-s)^{-p_{2}(r)}\int_{B_{r}}\snr{u-(u)_{r}}^{p(x)} \ dx,
\end{flalign*}
with $c=c(N,\m,\lambda,\Lambda,\gamma_{1},\gamma_{2})$. Now we are in position to apply Widman's hole filling technique and Lemma \ref{iter} to conclude that
\begin{flalign*}
\int_{B_{r/2}}\snr{Du}^{p(x)} \ dx \le &cr^{-p_{2}(r)}\int_{B_{r}}\snr{u-(u)_{r}}^{p(x)} \ dx\\
&=cr^{p_{1}(r)-p_{2}(r)}\int_{B_{r}}r^{-p_{1}(r)}\snr{u-(u)_{r}}^{p(x)} \ dx\le c\int_{B_{r}}\left |\frac{u-(u)_{r}}{r} \right |^{p(x)} \ dx,
\end{flalign*}
for $c=c(n,N,\m,\lambda,\Lambda,\gamma_{1},\gamma_{2},[p]_{0,\alpha},\alpha)$. Here we also used assumption $\mbox{(P1)}$, definition \eqref{p1p2} and Lemma \ref{L0} (\textit{ii}.).
\end{proof}
The next step consists in proving an interior higher integrability result for local minimizers of \eqref{cvp}.
\begin{lemma}\label{inngeh}
Let $u\in W^{1,p(\cdot)}(\Omega, \mathcal{M})$ be a constrained local minimizer of \eqref{cvp}. Then there esists a positive integrability threshold $\tilde{\delta}_{0}=\tilde{\delta}_{0}(n,N,\m,\lambda,\Lambda,\gamma_{1},\gamma_{2},[p]_{0,\alpha},\alpha)$ such that
\begin{flalign*}
\snr{Du}^{(1+\delta)p(\cdot)}\in L^{1}_{\mathrm{loc}}(\Omega)\quad \mbox{for \ all \ }\delta \in [0,\tilde{\delta}_{0})
\end{flalign*}
and, for any $B_{r}\Subset \Omega$
\begin{flalign*}
\left(\mint_{B_{r/2}}\snr{Du}^{(1+\delta)p(x)} \ dx\right)^{\frac{1}{1+\delta}}\le c\mint_{B_{r}}(1+\snr{Du}^{2})^{p(x)/2} \ dx\quad \mbox{for \ all \ }\delta \in [0,\tilde{\delta_{0}}),
\end{flalign*}
with $c=c(n,N,\mathcal{M},\lambda,\Lambda,\gamma_{1},\gamma_{2},[p]_{0,\alpha},\alpha)$.
\end{lemma}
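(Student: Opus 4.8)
The strategy is to derive, on small concentric balls, a reverse H\"older inequality for the quantity $G:=(1+\snr{Du}^{2})^{p(\cdot)/2}$ and then to invoke a Gehring-type lemma. Since only small balls are relevant (or by a covering argument), we may assume that the ball $B_{r}\Subset \Omega$ has radius $r\le 1$. Fix any ball $B_{2\varrho}(y)\subset B_{r}$ and abbreviate $p_{1}:=\inf_{B_{2\varrho}(y)}p$ and $p_{2}:=\sup_{B_{2\varrho}(y)}p$ in the spirit of \eqref{p1p2}, noting that $(\mathrm{P1})$ gives $p_{2}-p_{1}\le [p]_{0,\alpha}(4\varrho)^{\alpha}$. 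Applying the Caccioppoli inequality of Lemma \ref{cacc} on $B_{2\varrho}(y)$, adding the constant $1$, and using $(1+t^{2})^{p(x)/2}\le c(\gamma_{2})(1+t^{p(x)})$, one arrives at
\[
\mint_{B_{\varrho}(y)}G \ dx\le c\mint_{B_{2\varrho}(y)}\left|\frac{u-(u)_{2\varrho}}{2\varrho}\right|^{p(x)} \ dx+c.
\]

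Next I would freeze the exponent. Since $u$ takes values in the compact manifold $\m$, the quantity $\snr{u-(u)_{2\varrho}}$ is bounded by a constant depending only on $\m$, and since $0<\varrho\le 1$, splitting $\snr{\cdot}^{p(x)}=\snr{\cdot}^{p_{1}}\snr{\cdot}^{p(x)-p_{1}}$ and controlling the factors $\varrho^{-(p_{2}-p_{1})}$ and $\nr{u}_{L^{\infty}}^{p_{2}-p_{1}}$ by Lemma \ref{L0} \textit{(ii.)} (which applies because $p_{2}-p_{1}\le c\varrho^{\alpha}$) yields
\[
\mint_{B_{2\varrho}(y)}\left|\frac{u-(u)_{2\varrho}}{2\varrho}\right|^{p(x)} \ dx\le c\mint_{B_{2\varrho}(y)}\left|\frac{u-(u)_{2\varrho}}{2\varrho}\right|^{p_{1}} \ dx,
\]
with $c=c(\texttt{data})$, the $\m$-dependence encoding $\nr{u}_{L^{\infty}}$ as in Remark \ref{r0}. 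Now I would apply the intrinsic Sobolev--Poincar\'e inequality of Lemma \ref{inpoi} with the constant exponent $p\equiv p_{1}\in [\gamma_{1},\gamma_{2}]$; a uniform choice of the exponents $d_{1}>1>d_{2}>0$ there makes them depend only on $n,N,\gamma_{1},\gamma_{2}$. Combined with Jensen's inequality ($d_{1}>1$), this gives
\[
\mint_{B_{2\varrho}(y)}\left|\frac{u-(u)_{2\varrho}}{2\varrho}\right|^{p_{1}} \ dx\le \left(\mint_{B_{2\varrho}(y)}\left|\frac{u-(u)_{2\varrho}}{2\varrho}\right|^{p_{1}d_{1}} \ dx\right)^{\frac{1}{d_{1}}}\le c\left(\mint_{B_{2\varrho}(y)}\snr{Du}^{p_{1}d_{2}} \ dx\right)^{\frac{1}{d_{2}}}.
\]
Since $(1+\snr{Du}^{2})^{1/2}\ge 1$ and $p(x)\ge p_{1}$ on $B_{2\varrho}(y)$, one has $\snr{Du}^{p_{1}d_{2}}\le (1+\snr{Du}^{2})^{p_{1}d_{2}/2}\le G^{d_{2}}$ pointwise, so the previous estimates chain to the reverse H\"older inequality
\[
\mint_{B_{\varrho}(y)}G \ dx\le c\left(\mint_{B_{2\varrho}(y)}G^{d_{2}} \ dx\right)^{\frac{1}{d_{2}}},
\]
holding for every $B_{2\varrho}(y)\subset B_{r}$, where the additive constant has been absorbed via $G\ge 1$. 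The proof is then completed by a standard local Gehring-type higher integrability lemma applied to $\phi:=G^{d_{2}}$ with exponent $q:=1/d_{2}>1$: this produces a threshold $\tilde{\delta}_{0}=\tilde{\delta}_{0}(\texttt{data})>0$ such that $G^{1+\delta}\in L^{1}_{\loc}(\Omega)$ for every $\delta\in[0,\tilde{\delta}_{0})$, together with the quantitative bound $(\mint_{B_{r/2}}G^{1+\delta} \ dx)^{1/(1+\delta)}\le c\mint_{B_{r}}G \ dx$; since $\snr{Du}^{(1+\delta)p(x)}\le G^{1+\delta}$ pointwise, this is precisely the asserted conclusion.

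I expect the exponent-freezing step to be the main obstacle, as it is what genuinely departs from the classical $p\equiv\const$ argument: one must pass from the variable-exponent integral of $\snr{Du}^{p(x)}$ to the frozen-exponent one (to be allowed to use the constant-exponent Lemma \ref{inpoi}) and then back to $G$, all the while keeping every constant independent of the particular ball. This is where the H\"older continuity of $p(\cdot)$ is used essentially, through Lemma \ref{L0} \textit{(ii.)}, and where one must check that the Sobolev--Poincar\'e exponents $d_{1},d_{2}$ can be chosen uniformly over $p_{1}\in[\gamma_{1},\gamma_{2}]$. The remaining ingredients (Caccioppoli, Jensen, the scalar Gehring lemma) are routine.
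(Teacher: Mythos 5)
Your proposal is correct and follows essentially the same route as the paper: Caccioppoli's inequality (Lemma \ref{cacc}), freezing the variable exponent to its infimum over the ball by exploiting the boundedness of $u$ (compactness of $\m$) together with Lemma \ref{L0}\,(\textit{ii.}), passing through the intrinsic Sobolev--Poincar\'e inequality (Lemma \ref{inpoi}) at the frozen exponent, and finally invoking Gehring--Giaquinta--Modica. The only cosmetic difference is your bookkeeping device of working throughout with $G:=(1+\snr{Du}^{2})^{p(\cdot)/2}\ge 1$ so that the additive constant is absorbed into $G^{d_{2}}$, whereas the paper carries a dangling ``$+c$'' until the Gehring step; the two are interchangeable.
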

\begin{proof}
For a fixed $B_{r}\Subset \Omega$, combining Lemmas \ref{cacc} and \ref{inpoi} with $p\equiv p_{1}(r)$, we end up with
\begin{flalign*}
\mint_{B_{r/2}}\snr{Du}^{p(x)} \ dx \le &c\mint_{B_{r}}\left |\frac{u-(u)_{r}}{r} \right |^{p(x)} \ dx\\
\le &c\max\left\{1,2\nr{u}_{L^{\infty}(B_{r})}\right\}^{\gamma_{2}-\gamma_{1}}r^{p_{1}(r)-p_{2}(r)}\mint_{B_{r}}\left |\frac{u-(u)_{r}}{r} \right |^{p_{1}(r)} \ dx\\
\le &c\left(\mint_{B_{r}}\left |\frac{u-(u)_{r}}{r} \right |^{p_{1}(r)d_{1}} \ dx\right)^{\frac{1}{d_{1}}}\le c\left(\mint_{B_{r}}\snr{Du}^{p_{1}(r)d_{2}} \ dx\right)^{\frac{1}{d_{2}}}\le c\left(\mint_{B_{r}}\snr{Du}^{p(x)d_{2}} \ dx\right)^{\frac{1}{d_{2}}}+c,
\end{flalign*}
for $c=c(n,N,\m,\lambda,\Lambda,\gamma_{1},\gamma_{2},[p]_{0,\alpha},\alpha)$. Here we also used $\mbox{(P1)}$-$\mbox{(P2)}$, Lemma \ref{L0} (\textit{ii.}) and H\"older's inequality. Now, an application of Gehring-Giaquinta-Modica's lemma, \cite[Chapter 6]{giu} renders the existence of a positive $\tilde{\delta}_{0}=\tilde{\delta}_{0}(n,N,\m,\lambda,\Lambda,\gamma_{1},\gamma_{2},[p]_{0,\alpha},\alpha)$ so that
\begin{flalign*}
\left(\mint_{B_{r/2}}\snr{Du}^{(1+\delta)p(x)} \ dx\right)^{\frac{1}{1+\delta}}\le c\mint_{B_{r}}(1+\snr{Du}^{2})^{p(x)/2} \ dx,
\end{flalign*}
with $c=c(n,N,\m,\lambda,\Lambda,\gamma_{1},\gamma_{2},[p]_{0,\alpha},\alpha)$, for all $\delta \in [0,\tilde{\delta}_{0})$. Finally, after a standard covering argument, we obtain that $\snr{Du}^{(1+\delta)p(\cdot)}\in L^{1}_{\mathrm{loc}}(\Omega)$ for all $\delta \in [0,\tilde{\delta}_{0})$.
\end{proof}
\begin{remark}\em{Before proceding further we need to stress that, if $B_{r}\Subset \Omega$ and $w\in W^{1,p}(B_{r},\RN)$ is such that $w\equiv 0$ on $U\subset B_{r}$ with $\snr{U}>\tilde{c}\snr{B_{r}}$ for some positive, absolute $\tilde{c}$, then Sobolev-Poincar\'e's inequality gives
\begin{flalign}\label{poi0}
\int_{B_{r}}\snr{w/r}^{p} \ dx\le cr^{-n(p/p_{*}-1)}\left(\int_{B_{r}}\snr{Dw}^{p_{*}} \ dx\right)^{\frac{p}{p_{*}}},
\end{flalign}
for $c=c(n,N,p,\tilde{c})$. Here $p_{*}:=\max\left\{1,\frac{np}{n+p}\right\}$, as usual.
}

\end{remark}
The following lemma is an up to the boundary higher integrability result. The argument is well-known to specialists, see \cite{acefus, RT1}, and it essentially relies on the fact that Caccioppoli's inequality can be carried up to the boundary. However, we did not manage to find in the literature a proof for the manifold-constrained case, so we shall report it here.
\begin{lemma}\label{bougeh}
Let $p \in [\gamma_{1},\gamma_{2}]$, $u \in W^{1,p}_{\mathrm{loc}}(\Omega,\m)$ be such that $\snr{Du}^{p(1+\delta_{1})}\in L^{1}_{\mathrm{loc}}(\Omega)$ for some $\delta_{1}>0$ and let $v\in W^{1,p}_{u}(B_{r},\m)$ be a solution to the Dirichlet problem
\begin{flalign*}
W^{1,p}_{u}(B_{r},\m)\ni w\mapsto \min \int_{B_{r}}k_{0}\snr{Dw}^{p} \ dx,
\end{flalign*}
where $k_{0}\in [\lambda,\Lambda]$ is a positive constant and $B_{r}\Subset \Omega$ is any ball with $r \in (0,1]$. Then there exists a positive integrability threshold $\tilde{\sigma}_{0}\in (0,\delta_{1})$ such that
\begin{flalign*}
\mint_{B_{r}}(1+\snr{Dv}^{2})^{(1+\sigma)p/2} \ dx\le c\mint_{B_{r}}(1+\snr{Du}^{2})^{(1+\sigma)p/2} \ dx\quad \mbox{for \ all \ }\sigma \in [0,\tilde{\sigma}_{0}).
\end{flalign*}
Here $\tilde{\sigma}_{0}=\tilde{\sigma}_{0}(n,N,\m,\lambda,\Lambda,\gamma_{1},\gamma_{2})$ and $c=c(n,N,\m,\lambda,\Lambda,\gamma_{1},\gamma_{2})$.
\end{lemma}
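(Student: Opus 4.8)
The plan is to establish a reverse Hölder inequality for the comparison map $v$ up to the boundary of $B_r$ and then invoke the Gehring–Giaquinta–Modica lemma, exactly as in the interior case treated in Lemma \ref{inngeh}, but now with the exponent frozen at a constant value $p \in [\gamma_1,\gamma_2]$. The starting point is a boundary Caccioppoli inequality: for concentric balls $B_\varrho \subset B_\varrho' \subseteq B_r$ centered at a point $x_0$ which may lie on $\partial B_r$, one tests the minimality of $v$ against $w := v - \eta(v-\bar v)$, where $\eta$ is a standard cutoff between $B_\varrho$ and $B_\varrho'$ and $\bar v$ is chosen to be $(v)_{B_\varrho'}$ in the interior case but \emph{must be taken equal to $(u)_{B_\varrho'}$ (or simply $0$ after subtracting the boundary datum) when $B_\varrho'$ meets $\partial B_r$}, so that $w$ still agrees with $u$ on $\partial B_r$ and lies in $W^{1,p}_u(B_r,\RN)$. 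Here the manifold constraint re-enters: $w$ need not take values in $\m$, so one applies the finite energy extension Lemma \ref{ext} on the set $\U = B_\varrho' \cap B_r$ (a Lipschitz domain, being an intersection of two balls) to produce an admissible competitor $\tilde w \in W^{1,p}_u(\U,\m)$ with $\int_{\U}\snr{D\tilde w}^{p}\le c\int_{\U}\snr{Dw}^{p}$; minimality of $v$ against $\tilde w$ then yields, after Widman's hole-filling and Lemma \ref{iter},
\[
\mint_{B_\varrho \cap B_r}\snr{Dv}^{p}\,dx \le c\,\mint_{B_{2\varrho}\cap B_r}\Bigl|\frac{v-\bar v}{\varrho}\Bigr|^{p}\,dx + c\,\mint_{B_{2\varrho}\cap B_r}\snr{Du}^{p}\,dx,
\]
with the second term present only in the boundary case (it comes from the nonzero datum).

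**Passing to a reverse Hölder inequality.** Next I would feed this Caccioppoli estimate into a Sobolev–Poincaré inequality to gain a self-improving exponent on the right. In the interior case one uses Lemma \ref{inpoi} directly. In the boundary case, the key observation is that the extended function $v - u$ vanishes on $\partial B_r \cap B_{2\varrho}$, hence on a set occupying a fixed fraction of $B_{2\varrho}$ by elementary geometry of ball intersections; this is precisely the situation of inequality \eqref{poi0} in the remark preceding the statement, which gives
\[
\mint_{B_{2\varrho}\cap B_r}\Bigl|\frac{v-u}{\varrho}\Bigr|^{p}\,dx \le c\Bigl(\mint_{B_{2\varrho}\cap B_r}\snr{Dv - Du}^{p_*}\,dx\Bigr)^{p/p_*},\qquad p_* = \max\{1,\tfrac{np}{n+p}\}.
\]
Combining with the Caccioppoli estimate and absorbing $\snr{Du}$ into the right-hand side, one obtains a reverse Hölder inequality of the form $\mint_{B_\varrho}(1+\snr{Dv}^2)^{p/2}\le c\bigl(\mint_{B_{2\varrho}}(1+\snr{Dv}^2)^{p_*/2}\bigr)^{p/p_*} + c\,\mint_{B_{2\varrho}}(1+\snr{Du}^2)^{p/2}$, valid for all balls $B_{2\varrho}$ with center in $B_r$ and, after reflection/restriction, adapted to the boundary. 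The hypothesis $\snr{Du}^{p(1+\delta_1)}\in L^1_{\mathrm{loc}}$ ensures the perturbation term on the right is in a slightly better space than $L^1$, so Gehring's lemma (the version with a right-hand side in $L^{1+\delta_1}$, e.g.\ \cite[Chapter 6]{giu}) applies and produces $\tilde\sigma_0 \in (0,\delta_1)$ and the claimed estimate
\[
\mint_{B_r}(1+\snr{Dv}^2)^{(1+\sigma)p/2}\,dx \le c\,\mint_{B_r}(1+\snr{Du}^2)^{(1+\sigma)p/2}\,dx\qquad\text{for all }\sigma\in[0,\tilde\sigma_0),
\]
with the dependencies as stated; the stability of constants under $p\in[\gamma_1,\gamma_2]$ follows from the Remark after Definition \ref{D1}.

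**Main obstacle.** The routine parts are the hole-filling and the bookkeeping of constants; the genuinely delicate point is handling the boundary layer in the manifold-constrained setting. One cannot simply use a convex combination of $v$ with a cutoff as competitor because the constraint $w(x)\in\m$ would be destroyed, and the metric projection $\Pi$ is only defined on a tubular neighborhood — so one must verify that the perturbed map $w = v - \eta(v-u)$ stays in the region where Lemma \ref{ext}'s construction (via the retraction $P_{\tilde a}$) is applicable, i.e.\ that $\dist(w,\m)$ remains controlled. This is where the $L^\infty$ bound coming from $\m$ compact and $v$ sharing boundary data with $u$ is essential: one first establishes an a priori $L^\infty$ bound on $v$ (via a maximum-principle / truncation argument, or simply by noting the minimizer of the $p$-energy with $\m$-valued boundary data can be projected), so that $w$ takes values in a fixed compact set and the extension lemma applies on $\U = B_\varrho'\cap B_r$. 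I also expect some care is needed in checking that the intersection $B_\varrho' \cap B_r$ has Lipschitz boundary with constants uniform in $\varrho$ and in the position of $x_0$ relative to $\partial B_r$, which is needed so that the constant in Lemma \ref{ext} does not degenerate as $x_0$ ranges over $\overline{B_r}$.
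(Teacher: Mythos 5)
Your proposal follows essentially the same route as the paper: a boundary Caccioppoli inequality obtained by testing with $w = v - \eta(v-u)$ and applying the finite-energy extension Lemma~\ref{ext} on $B_r \cap B_t(x_0)$, then the Sobolev--Poincar\'e inequality \eqref{poi0} for the extended-by-zero function $v-u$, a covering argument combining the boundary and interior cases, and a Gehring-type lemma with a right-hand-side term (the paper cites \cite[Theorem 3 and Proposition 1, Chapter 2]{giamodsou} for the variant that carries $|Du|^p$ through to the higher integrability level); the last line also needs the minimality of $v$ in $W^{1,p}_u(B_r,\m)$ together with H\"older to absorb the residual $\mint_{B_r}|Dv|^p$ term, which you implicitly fold into ``Gehring applies.''

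One point in your ``main obstacle'' paragraph is off: you claim one must verify that $\dist(w,\m)$ stays controlled so that $w$ lies where the retraction is defined, but Lemma~\ref{ext} does not require this. It only needs $w \in W^{1,p(\cdot)}\cap L^\infty$ with $w(\partial\U)\subset\m$; the cube $Q$ in its proof is chosen a posteriori to contain the (bounded) image of $w$, and the locally Lipschitz retraction $P_{\tilde a}=\Pi\circ\psi$ is defined on all of $Q\setminus X_{\tilde a}$, not merely on a tubular neighbourhood. Since $w=(1-\eta)v+\eta u$ is a convex combination of two $\m$-valued (hence bounded) maps, the $L^\infty$ hypothesis is automatic, and no proximity to $\m$ is needed. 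Your concern about the uniform Lipschitz character of $B_{\varrho'}\cap B_r$ is legitimate as a technical detail, but the paper sidesteps it by considering only the two clean regimes $|B_\varrho(x_0)\setminus B_r|>|B_\varrho(x_0)|/10$ and $B_\varrho(x_0)\Subset B_r$, which suffice for the covering argument.
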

\begin{proof}
With $x_{0}\in B_{r}$, let us fix a ball $B_{\varrho}(x_{0})\subset \mathbb{R}^{n}$, $0<\varrho\le 1$. We start with the case in which $\snr{B_{\varrho}(x_{0})\setminus B_{r}}>\snr{B_{\varrho}(x_{0})}/10$. Let us fix parameters $0<\varrho/2\le s<t\le \varrho$ and consider $\eta \in C^{1}_{c}(B_{t}(x_{0}))$ such that $\chi_{B_{s}(x_{0})}\le \eta \le \chi_{B_{t}(x_{0})}$ and $\snr{D\eta}\le 2(t-s)^{-1}$. The function $w:=v-\eta(v-u)$ coincides with $v$ on $\partial B_{r}$ and on $\partial (B_{r}\cap B_{t}(x_{0}))$ in the sense of traces, so Lemma \ref{ext} with $p(\cdot)$ equal to constant $p$, provides us with a map $\tilde{w}\in W^{1,p}_{v}(B_{r}\cap B_{t}(x_{0}),\m)$ such that
\begin{flalign}\label{bg0}
\int_{B_{r}\cap B_{t}(x_{0})}\snr{D\tilde{w}}^{p} \ dx \le& c\int_{B_{r}\cap B_{t}(x_{0})}\snr{Dw}^{p} \ dx\nonumber \\
\le& c\int_{B_{r}\cap (B_{t}(x_{0})\setminus B_{s}(x_{0}))}\snr{Dv}^{p} \ dx+c\int_{B_{r}\cap B_{\varrho}(x_{0})}\snr{Du}^{p}+\left |\frac{v-u}{t-s} \right |^{p} \ dx,
\end{flalign}
for $c=c(N,\m,\gamma_{1},\gamma_{2})$. The minimality of $v$ in the Dirichlet class $W^{1,p}_{v}(B_{r}\cap B_{t}(x_{0}),\m)$ and \eqref{bg0} render
\begin{flalign*}
\int_{B_{r}\cap B_{s}(x_{0})}\snr{Dv}^{p} \ dx \le&\frac{\Lambda}{\lambda}\int_{B_{r}\cap B_{t}(x_{0})}\snr{D\tilde{w}}^{p} \ dx\\
\le &c\int_{B_{r}\cap (B_{t}(x_{0})\setminus B_{s}(x_{0}))}\snr{Dv}^{p} \ dx+c\int_{B_{r}\cap B_{\varrho}(x_{0})}\snr{Du}^{p}+\left |\frac{v-u}{t-s} \right |^{p} \ dx,
\end{flalign*}
for $c=c(N,\m,\lambda,\Lambda,\gamma_{1},\gamma_{2})$. By filling the hole and applying Lemma \ref{iter}, we get
\begin{flalign}\label{bg1}
\int_{B_{r}\cap B_{\varrho/2}(x_{0})}\snr{Dv}^{p} \ dx \le c\int_{B_{r}\cap B_{\varrho}(x_{0})}\snr{Du}^{p}+\left |\frac{u-v}{\varrho} \right |^{p} \ dx,
\end{flalign}
with $c=c(n,N,\m,\lambda,\Lambda,\gamma_{1},\gamma_{2})$. Notice that we can extend $v-u\equiv 0$ outside $B_{r}$, since $u-v \in W^{1,p}_{0}(B_{r},\RN)$, so there are no discontinuities on $\partial (B_{r}\cap B_{\varrho}(x_{0}))$. Recalling also that $\snr{B_{\varrho}(x_{0})\setminus B_{r}}>\snr{B_{\varrho}(x_{0})}/10$, from \eqref{poi0} we have that
\begin{flalign*}
\int_{B_{r}\cap B_{\varrho}(x_{0})}&\left |\frac{u-v}{\varrho} \right |^{p} \ dx=\int_{B_{\varrho}(x_{0})}\left |\frac{u-v}{\varrho} \right |^{p} \ dx\nonumber \\
\le&c\varrho^{-n(p/p_{*}-1)}\left(\int_{B_{\varrho}(x_{0})}\snr{Du-Dv}^{p_{*}} \ dx\right)^{\frac{p}{p_{*}}}=c\varrho^{-n(p/p_{*}-1)}\left(\int_{B_{r}\cap B_{\varrho}(x_{0})}\snr{Du-Dv}^{p_{*}} \ dx\right)^{\frac{p}{p_{*}}},
\end{flalign*}
for $c=c(n,N,\gamma_{1},\gamma_{2})$. Averaging in the previous display and keeping in mind that $\snr{B_{\rr}(x_{0})\cap B_{r}}\le \snr{B_{\rr}(x_{0})}$, we obtain
\begin{flalign}\label{bg2}
\mint_{B_{r}\cap B_{\varrho}(x_{0})}&\left |\frac{u-v}{\varrho} \right |^{p} \ dx\le c\left(\mint_{B_{r}\cap B_{\varrho}(x_{0})}\snr{Du-Dv}^{p_{*}} \ dx\right)^{\frac{p}{p_{*}}},
\end{flalign}
with $c=c(n,N,\m,\lambda,\Lambda,\gamma_{1},\gamma_{2})$. Coupling \eqref{bg1} and \eqref{bg2} we get, by triangle and H\"older's inequalities,
\begin{flalign*}
\mint_{B_{r}\cap B_{\varrho/2}(x_{0})}\snr{Dv}^{p} \ dx \le c\left\{\mint_{B_{r}\cap B_{\varrho}(x_{0})}\snr{Du}^{p} \ dx +\left(\mint_{B_{r}\cap B_{\varrho}(x_{0})}\snr{Dv}^{p_{*}} \ dx\right)^{\frac{p}{p_{*}}}\right\},
\end{flalign*}
with $c=c(n,N,\m,\lambda,\Lambda,\gamma_{1},\gamma_{2})$. We next consider the situation when it is $B_{\varrho}(x_{0})\Subset B_{r}$. In this case, we can apply the standard Sobolev-Poincar\'e's inequality, thus getting, as in the interior case, (Lemma \ref{inngeh} with $p(\cdot)$ equal to constant $p$),
\begin{flalign*}
\mint_{B_{\varrho/2}(x_{0})}\snr{Dv}^{p} \ dx \le c\left(\mint_{B_{\varrho}(x_{0})}\snr{Dv}^{p_{*}} \ dx\right)^{\frac{p}{p_{*}}},
\end{flalign*}
for $c=c(n,N,\m,\lambda,\Lambda,\gamma_{1},\gamma_{2})$. The two cases can be combined via a standard covering argument. Precisely, upon defining
\begin{flalign*}
V(x):=\begin{cases}
\ \snr{Dv(x)}^{p_{*}} \ \ & x \in B_{\varrho}(x_{0})\\
\ 0 \ \ &x\in \mathbb{R}^{n}\setminus B_{\varrho}(x_{0})
\end{cases}\quad \mbox{and}\quad U(x):=\begin{cases} \ \snr{Du(x)}^{p} \ \ & x\in B_{\varrho}(x_{0})\\
\ 0 \ \ & x\in \mathbb{R}^{n}\setminus B_{\varrho}(x_{0})
\end{cases},
\end{flalign*} 
we get 
\begin{flalign*}
\mint_{B_{\varrho/2}(x_{0})}V(x)^{\frac{p}{p_{*}}} \ dx \le c\left\{\mint_{B_{\varrho}(x_{0})}U(x) \ dx +\left(\mint_{B_{\varrho}(x_{0})}V(x) \ dx\right)^{\frac{p}{p_{*}}}\right\},
\end{flalign*}
with $c=c(n,N,\m,\lambda,\Lambda,\gamma_{1},\gamma_{2})$. At this point, by a variant of Gehring's Lemma, we obtain that there exists a positive $\tilde{\sigma}_{0}=\tilde{\sigma}_{0}(n,N,\m,\lambda,\Lambda,\gamma_{1},\gamma_{2})$ such that $0<\tilde{\sigma}<\delta_{1}$ and
\begin{flalign}\label{rev0}
\left(\mint_{B_{r}}\snr{Dv}^{p(1+\sigma)} \ dx\right)^{\frac{1}{1+\sigma}}\le c\left\{\mint_{B_{r}}\snr{Dv}^{p} \ dx +\left(\mint_{B_{r}}\snr{Du}^{(1+\sigma)p} \ dx\right)^{\frac{1}{1+\sigma}}\right\},
\end{flalign}
for all $\sigma\in[0,\tilde{\sigma}_{0})$ where $c=c(n,N,\m,\lambda,\Lambda,\gamma_{1},\gamma_{2})$, see \cite[Theorem 3 and Proposition 1, Chapter 2]{giamodsou}. From \eqref{rev0} and the minimality of $v$ within the Dirichlet class $W^{1,p}_{u}(B_{r},\m)$ we can conclude that
\begin{flalign*}
\mint_{B_{r}}\snr{Dv}^{p(1+\sigma)} \ dx \le c\mint_{B_{r}}\snr{Du}^{p(1+\sigma)} \ dx \ \ \mathrm{for \ all} \ \ \sigma \in [0,\tilde{\sigma}_{0}),
\end{flalign*}
with $c=c(n,N,\m,\lambda,\Lambda,\gamma_{1},\gamma_{2})$.
\end{proof}
The next corollary allows recovering some useful estimates for the average of the gradient of solutions to problem \eqref{cvp}.
\begin{corollary}\label{C0}
Let $u \in W^{1,p(\cdot)}(\Omega,\m)$ be a constrained local minimizer of \eqref{cvp}. Then, for any $B_{r}\subset \Omega$ with $r\in (0,1]$, such that $B_{4r}\Subset \Omega$ there holds
\begin{flalign}\label{16}
&\mint_{B_{r}}\snr{Du}^{p(x)} \ dx \le cr^{-p_{2}(2r)},\\
&\mint_{B_{r}}\snr{Du}^{p(x)(1+\delta)} \ dx \le cr^{-p_{2}(4r)(1+\delta)},\label{17}
\end{flalign}
where $c=c(n,N,\m,\lambda,\Lambda,\gamma_{1},\gamma_{2},[p]_{0,\alpha},\alpha)$ and $\delta\in (0,\tilde{\delta}_{0})$, where $\tilde{\delta}_{0}$ is the higher integrability threshold given by Lemma \ref{inngeh}.
\end{corollary}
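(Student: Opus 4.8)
The plan is to derive both estimates by feeding the Caccioppoli inequality of Lemma~\ref{cacc} and the higher integrability of Lemma~\ref{inngeh} with one elementary fact: since $\m$ is compact, $\snr{u(x)-(u)_{\varrho}}\le\diam(\m)=:c(\m)$ for a.e.\ $x$ and every ball $B_{\varrho}$, so that, because $r\le1$, quantities of the form $\mint_{B_{\varrho}}\big|\tfrac{u-(u)_{\varrho}}{\varrho}\big|^{p(x)}\,dx$ are controlled by $c\,\varrho^{-p_{2}(\varrho)}$.

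First I would prove \eqref{16}. Since $B_{4r}\Subset\Omega$, also $B_{2r}\Subset\Omega$, so Lemma~\ref{cacc} applied on $B_{2r}$ gives
\[
\mint_{B_{r}}\snr{Du}^{p(x)}\,dx\le c\mint_{B_{2r}}\Big|\frac{u-(u)_{2r}}{2r}\Big|^{p(x)}\,dx\le c\sup_{x\in B_{2r}}\Big(\frac{c(\m)}{2r}\Big)^{p(x)}.
\]
As $r\le1$ yields $r^{-p_{2}(2r)}\ge r^{-\gamma_{1}}\ge1$, whether or not $c(\m)/(2r)\ge1$ the last supremum is bounded by $c(\m,\gamma_{2})\,(2r)^{-p_{2}(2r)}\le c\,r^{-p_{2}(2r)}$, which is \eqref{16}.

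Next I would prove \eqref{17}. Applying Lemma~\ref{inngeh} on $B_{2r}$ (permissible since $B_{2r}\Subset\Omega$) and using the elementary bound $(1+t^{2})^{p/2}\le c(\gamma_{2})(1+t^{p})$,
\[
\Big(\mint_{B_{r}}\snr{Du}^{(1+\delta)p(x)}\,dx\Big)^{\frac{1}{1+\delta}}\le c\mint_{B_{2r}}(1+\snr{Du}^{2})^{p(x)/2}\,dx\le c+c\mint_{B_{2r}}\snr{Du}^{p(x)}\,dx.
\]
I would then estimate $\mint_{B_{2r}}\snr{Du}^{p(x)}\,dx$ by Lemma~\ref{cacc} on $B_{4r}$ exactly as in the previous step, obtaining $\le c\,(4r)^{-p_{2}(4r)}\le c\,r^{-p_{2}(4r)}$; hence the whole right-hand side is $\le c+c\,r^{-p_{2}(4r)}\le c\,r^{-p_{2}(4r)}$, and raising to the power $1+\delta<1+\tilde\delta_{0}$ (which only replaces $c$ by a bounded power of $c$) gives \eqref{17}.

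This is essentially bookkeeping; the only points needing care are: checking that every dilated ball invoked---$B_{2r}$ for Caccioppoli and for the Gehring step, and $B_{4r}$ when re-using Caccioppoli inside the proof of \eqref{17}---lies compactly in $\Omega$, which is exactly why the hypothesis is phrased as $B_{4r}\Subset\Omega$; using Caccioppoli \emph{directly} on $B_{4r}$ in the proof of \eqref{17} rather than \eqref{16} applied on $B_{2r}$ (the latter would illegitimately require $B_{8r}\Subset\Omega$); and replacing $p(x)$ by the correct supremum, namely $p_{2}(2r)$ over $B_{2r}$ for \eqref{16} and $p_{2}(4r)$ over $B_{4r}$ for \eqref{17}. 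Since both Lemma~\ref{cacc} and Lemma~\ref{inngeh} furnish constants depending only on \texttt{data}, and this dependence survives raising to the bounded power $1+\delta$, the final constant depends only on \texttt{data}, as claimed. No genuinely hard step is expected; the main (mild) obstacle is just the consistent tracking of radii and exponents.
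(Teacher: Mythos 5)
Your proof is correct and follows essentially the same route as the paper: both estimates come from combining Lemma~\ref{cacc} (and, for \eqref{17}, Lemma~\ref{inngeh}) with the trivial bound $\snr{u-(u)_\varrho}\le c(\m)$ supplied by the compactness of $\m$, with the same bookkeeping of radii. Your version of \eqref{17} in fact cleanly produces the exponent $p_2(4r)$ stated in the corollary, whereas the paper's own display writes $p_2(r)$, which is a minor slip in the source.
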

\begin{proof}
Inequality \eqref{16} comes from an application of Lemma \ref{cacc} and the boundedness of $u$. In fact we have
\begin{flalign*}
\mint_{B_{r}}\snr{Du}^{p(x)} \ dx \le c\mint_{B_{2r}}\left |\frac{u-(u)_{2r}}{r} \right |^{p(x)} \ dx\le c\max\left\{1,2\nr{u}_{L^{\infty}(B_{2r})}\right\}^{\gamma_{2}}r^{-p_{2}(2r)},
\end{flalign*}
for $c=c(n,N,\m,\lambda,\Lambda,\gamma_{1},\gamma_{2},[p]_{0,\alpha},\alpha)$. On the other hand, combining Lemmas \ref{inngeh} and \ref{cacc}, we have
\begin{flalign*}
\mint_{B_{r}}\snr{Du}^{(1+\delta)p(x)} \ dx \le c\left(\mint_{B_{2r}}(1+\snr{Du}^{2})^{p(x)/2} \ dx\right)^{1+\delta}\le c\left(1+\mint_{B_{4r}}\left | \frac{u-(u)_{4r}}{r}\right |^{p(x)} \ dx\right)^{1+\delta}\le cr^{-p_{2}(r)(1+\delta)},
\end{flalign*}
with $c=c(n,N,\m,\lambda,\Lambda,\gamma_{1},\gamma_{2},[p]_{0,\alpha},\alpha)$, for any $\delta \in (0,\tilde{\delta}_{0})$.
\end{proof}
\subsection{Proof of Theorem \ref{T0}}
Now we are ready to prove Theorem \ref{T0}. For the reader's convenience, we shall split the proof in seven steps.\\\\
\emph{Step 1: comparison, first time.} We define $\delta_{0}:=\frac{1}{2}\min\left\{\tilde{\delta}_{0},1\right\}$, where $\tilde{\delta}_{0}$ is the higher integrability threshold from Lemma \ref{inngeh}. Notice that by $\mbox{(P1)}$, the set
\begin{flalign}\label{om1}
\Omega^{+}:=\left\{x \in \Omega\colon p(x)> n-\frac{\delta_{0}}{2}\right\},
\end{flalign}
is open and by Lemma \ref{inngeh}, $\snr{Du}^{(1+\delta_{0})p(\cdot)}\in L^{1}(\Omega^{+})$, thus $u \in W^{1,n+\frac{\delta_{0}}{4}}(\Omega^{+},\m)$. An application of Morrey's embedding theorem then renders that $u \in C^{0,\beta'}(\Omega^{+},\m)$ with $\beta':=\frac{\delta_{0}}{4n+\delta_{0}}$. We will treat this case in \emph{Step 7}, so, from now on, $\gamma_{2}<n$ holds. We set
\begin{flalign}\label{sigma0}
\sigma_{0}:=\frac{1}{2}\min\left\{\delta_{0},\frac{\alpha}{2\max\{\gamma_{2},n\}}\right\}
\end{flalign}
and fix a $\tilde{R}_{*}\in (0,1]$ so small that
\begin{flalign}\label{18}
[p]_{0,\alpha}4\tilde{R}_{*}^{\alpha}\le \frac{\sigma_{0}\gamma_{1}}{\sigma_{0}+2}
\end{flalign}
is satisfied on $\bar{B}_{\tilde{R}_{*}}\equiv \bar{B}_{\tilde{R}_{*}}(x_{0})\Subset \Omega$. Clearly this condition transfers on any ball $B_{r}(x_{1})\subset B_{\tilde{R}_{*}}$. We select also an $R_{*}\in (0,\tilde{R}_{*}/2)$, whose size will be specified along the proof. Now notice that, since Lemma \ref{inngeh} holds true for all balls $B_{4r}\subset B_{R_{*}}\subset B_{\tilde{R}_{*}}$, $\snr{Du}^{(1+\delta)p_{1}(2r)}\in L^{1}(B_{2r})$ for all $\delta \in (0,\delta_{0}]$. Therefore, by \eqref{18} and assumption $\mbox{(P1)}$ it easily follows that
\begin{flalign}\label{kom}
p_{2}(2r)<\left(1+\frac{\sigma_{0}}{2}\right)p_{2}(2r)\le (1+\sigma_{0})p_{1}(2r),
\end{flalign}
so, recalling that $\sigma_{0}<\delta_{0}$, we get
\begin{flalign}\label{19}
\snr{Du}^{(1+\sigma/2)p_{2}(2r)}\in L^{1}(B_{2r}) \quad \mbox{for \ all \ }\sigma \in (0,\sigma_{0}].
\end{flalign}
On such a ball we impose the following smallness condition on the energy: there exists an $\varepsilon\in (0,1)$, whose size will be fixed later on, such that
\begin{flalign}\label{small}
\left((2r)^{p_{2}(2r)-n}\int_{B_{2r}}(1+\snr{Du}^{2})^{p_{2}(2r)/2} \ dx \right)^{\frac{1}{p_{2}(2r)}}<\varepsilon.
\end{flalign}
Let $v\in W^{1,p_{2}(2r)}_{u}(B_{r},\m)$ be a solution to the frozen Dirichlet problem
\begin{flalign}\label{fdp}
\inf_{w\in W^{1,p_{2}(2r)}_{u}(B_{r},\m)}\mathcal{G}(w,B_{r}):=\inf_{w\in W^{1,p_{2}(2r)}_{u}(B_{r},\m)}\int_{B_{r}}k_{0}\snr{Dw}^{p_{2}(2r)} \ dx,
\end{flalign}
where $k_{0}:=k(x_{0})$ is the value the coefficient $k(\cdot)$ attains in the centre of $B_{r}$. Needless to say, being $\mbox{(K2)}$ in force, $k(\cdot)$ ranges between two positive, absolute constants $\lambda$ and $\Lambda$, so none of the estimates we will provide is going to depend on $x_{0}$. By minimality, $v$ solves the Euler-Lagrange equation
\begin{flalign}\label{elv}
0=\int_{B_{r}}k_{0}p_{2}(2r)\snr{Dv}^{p_{2}(2r)-2}\left(Dv\cdot D\varphi-A_{v}(Dv,Dv)\varphi\right) \ dx,
\end{flalign}
for any $\varphi \in W^{1,p_{2}(2r)}_{0}(B_{r},\RN)\cap L^{\infty}(B_{r},\RN)$, where, for $y \in \m$, $A_{y}\colon T_{y}\m\times T_{y}\m\to (T_{y}\m)^{\perp}$ denotes the second fundamental form of $\m$. In particular, by tangentiality,
\begin{flalign}\label{20}
\nabla^{2}\Pi(v)(Dv,Dv)=-A_{v}(Dv,Dv)\quad \mbox{and}\quad \snr{A_{v}(Dv,Dv)}\le c_{\m}\snr{Dv}^{2},
\end{flalign}
where $c_{\m}$ depends only on the geometry of $\m$, see \cite[Appendix to Chapter 2]{sim}. In all the forthcoming estimates, any dependency on $c_{\m}$ of the constants will always be denoted as a dependency on $\m$, i.e.: $c(c_{\m})\equiv c(\m)$. From \eqref{19}, the compactness of $\m$ and the fact that $\left.v\right |_{\partial B_{r}}=\left.u\right |_{\partial B_{r}}$, we see that the map $\varphi:=u-v$ is admissible in \eqref{elv}. Let us define
\begin{flalign}\label{sigma}
\sigma:=\frac{1}{2}\min\left\{\tilde{\sigma}_{0}, \sigma_{0}\right\},
\end{flalign}
where $\tilde{\sigma}_{0}$ is the boundary higher integrability threshold given by Lemma \ref{bougeh}. Now, exploiting assumptions $(P2)$-$(K2)$, $\eqref{20}_{2}$ and H\"older's inequality we estimate
\begin{flalign}\label{s10}
&\left | \ \int_{B_{r}}k_{0}p_{2}(2r)\snr{Dv}^{p_{2}(2r)-2}A_{v}(Dv,Dv)(u-v)  \ dx\ \right |\nonumber \\
&\quad\quad\quad\quad\quad\quad\quad\le c\int_{B_{r}}\snr{Dv}^{p_{2}(2r)}\snr{u-v} \ dx\nonumber \\
&\quad\quad\quad\quad\quad\quad\quad\le cr^{n}\left(\mint_{B_{r}}\snr{Dv}^{(1+\sigma)p_{2}(2r)} \ dx\right)^{\frac{1}{1+\sigma}}\left(\mint_{B_{r}}\snr{u-v}^{p_{2}(2r)} \ dx\right)^{\frac{\sigma}{1+\sigma}}=:cr^{n}\left[\mbox{(I)}\cdot\mbox{(II)}\right],
\end{flalign}
where $c=c(n,N,\m,\lambda,\Lambda,\gamma_{1},\gamma_{2})$. Using Lemma \ref{bougeh}, \eqref{sigma}, H\"older's inequality, \eqref{p1p2}, assumptions $\mbox{(P1)}$-$\mbox{(P2)}$, \eqref{kom}, Lemma \ref{inngeh}, \eqref{small} and Lemma \ref{L0} (\textit{ii.}) we have
\begin{flalign*}
\mbox{(I)}\le &c\left(\mint_{B_{r}}\snr{Du}^{(1+\sigma)p_{2}(2r)} \ dx\right)^{\frac{1}{1+\sigma}}\le c\left(\mint_{B_{r}}\snr{Du}^{(1+\sigma_{0})p(x)} \ dx\right)^{\frac{p_{2}(2r)}{(1+\sigma_{0})p_{1}(r)}}\nonumber \\
\le& c\left(\mint_{B_{2r}}(1+\snr{Du}^{2})^{p(x)/2} \ dx\right)^{\frac{p_{2}(2r)}{p_{1}(2r)}-1}\mint_{B_{2r}}(1+\snr{Du}^{2})^{p_{2}(2r)/2} \ dx\nonumber \\
\le &cr^{-4^{\alpha}r^{\alpha}[p]_{0,\alpha}\frac{\gamma_{2}}{\gamma_{1}}}\left((2r)^{p_{2}(2r)-n}\int_{B_{2r}}(1+\snr{Du}^{2})^{p_{2}(2r)/2} \ dx\right)^{\frac{p_{2}(2r)-p_{1}(2r)}{p_{1}(2r)}}\mint_{B_{2r}}(1+\snr{Du}^{2})^{p_{2}(2r)/2} \ dx\nonumber \\
\le &c\varepsilon^{\frac{p_{2}(2r)(p_{2}(2r)-p_{1}(2r))}{p_{1}(2r)}}\mint_{B_{2r}}(1+\snr{Du}^{2})^{p_{2}(2r)/2} \ dx\le c\mint_{B_{2r}}(1+\snr{Du}^{2})^{p_{2}(2r)/2} \ dx,
\end{flalign*}
with $c=c(n,N,\m,\lambda,\Lambda,\gamma_{1},\gamma_{2},[p]_{0,\alpha},\alpha)$. On the other hand, by Poincar\'e's inequality, the minimality of $v$ in class $W^{1,p_{2}(2r)}_{u}(B_{r},\m)$ and \eqref{small} we bound
\begin{flalign*}
\mbox{(II)}\le &c\left(r^{p_{2}(2r)}\mint_{B_{r}}\snr{Du-Dv}^{p_{2}(2r)} \ dx\right)^{\frac{\sigma}{1+\sigma}}\nonumber \\
\le& c\left((2r)^{p_{2}(2r)-n}\int_{B_{2r}}(1+\snr{Du}^{2})^{p_{2}(2r)/2} \ dx\right)^{\frac{\sigma}{1+\sigma}}\le c\varepsilon^{\frac{\gamma_{1}\sigma}{1+\sigma}},
\end{flalign*}
for $c=c(n,N,\lambda,\Lambda,\gamma_{1},\gamma_{2})$. Inserting the content of the previous two displays in \eqref{s10} we obtain
\begin{flalign}\label{21}
\left | \ \int_{B_{r}}k_{0}p_{2}(2r)\snr{Dv}^{p_{2}(2r)-2}A_{v}(Dv,Dv)(u-v)  \ dx\ \right |\le c\varepsilon^{\frac{\gamma_{1}\sigma}{1+\sigma}}\int_{B_{2r}}(1+\snr{Du}^{2})^{p_{2}(2r)/2} \ dx,
\end{flalign}
for $c=c(n,N,\m,\lambda,\Lambda,\gamma_{1},\gamma_{2},[p]_{0,\alpha},\alpha)$. For the ease of notation, if $z\in \mathbb{R}^{N\times n}$, let us name $g(z):=k_{0}\snr{z}^{p_{2}(2r)}$. The convexity of $g(\cdot)$ and \eqref{elv} then render
\begin{flalign*}
\mathcal{G}(u,B_{r})-\mathcal{G}(v,B_{r})=&\int_{B_{r}}\partial g(Dv)(Du-Dv) \ dx \\
&+\int_{B_{r}}\left(\int_{0}^{1}(1-t)\partial^{2}g(t Du+(1-t)Dv) \ dt\right)(Du-Dv)(Du-Dv) \ dx\\
\ge &-\left| \ \int_{B_{r}}k_{0}p_{2}(2r)\snr{Dv}^{p_{2}(2r)-2}A_{v}(Dv,Dv)(u-v)  \ dx \ \right |\\
&+c\int_{B_{r}}\left(\int_{0}^{1}(1-t)\snr{t Du+(1-t)Dv}^{p_{2}(2r)-2} \ dt\right)\snr{Du-Dv}^{2} \ dx,
\end{flalign*}
with $c=c(n,N,\lambda,\gamma_{1},\gamma_{2})$. From this and \eqref{21} we obtain
\begin{flalign}\label{22}
c\int_{B_{r}}(\snr{Dv}^{2}+\snr{Du}^{2})^{\frac{p_{2}(2r)-2}{2}}\snr{Du-Dv}^{2} \ dx\le&c\varepsilon^{\frac{\gamma_{1}\sigma}{1+\sigma}}\int_{B_{2r}}(1+\snr{Du}^{2})^{\frac{p_{2}(2r)}{2}} \ dx+\g(u,B_{r})-\g(v,B_{r}),
\end{flalign}
with $c=c(n, N,\m, \lambda,\Lambda,\gamma_{1},\gamma_{2}, [p]_{0,\alpha},\alpha)$. Using this time the minimality of $u$, we see that
\begin{flalign*}
\mathcal{G}(u,B_{r})-\mathcal{G}(v,B_{r})\le &\mathcal{G}(u,B_{r})-\mathcal{G}(v,B_{r})+\mathcal{E}(v,B_{r})-\mathcal{E}(u,B_{r})\\
\le &\snr{\mathcal{G}(u,B_{r})-\mathcal{E}(u,B_{r})}+\snr{\mathcal{E}(v,B_{r})-\mathcal{G}(v,B_{r})}.
\end{flalign*}
Recall the definitions of $\sigma_{0}$ and of $k_{0}$. From assumptions $\mbox{(K1)}$-$\mbox{(K2)}$ and $\mbox{(P1)}$-$\mbox{(P2)}$, Lemma \ref{L0} (\textit{i.}) with $\varepsilon_{0}\equiv\sigma_{0}/2$, \eqref{kom}, Lemma \ref{inngeh} and \eqref{small} we obtain 
\begin{flalign*}
\snr{\mathcal{G}(u,B_{r})-\mathcal{E}(u,B_{r})}\le &\int_{B_{r}}\snr{k_{0}-k(x)}\snr{Du}^{p_{2}(2r)} \ dx +\Lambda\int_{B_{r}}\left |\snr{Du}^{p_{2}(2r)}-\snr{Du}^{p(x)} \right | \ dx\\
\le &cr^{\alpha+n}\mint_{B_{r}}1+\snr{Du}^{p_{2}(2r)(1+\sigma_{0}/2)} \ dx\le cr^{\alpha+n}\mint_{B_{r}}1+\snr{Du}^{p_{1}(2r)(1+\sigma_{0})} \ dx\\
\le &cr^{\alpha+n}\left(\mint_{B_{2r}}(1+\snr{Du}^{2})^{p_{2}(2r)/2} \ dx\right)^{1+\sigma_{0}}\\
\le& cr^{\alpha+n-\gamma_{2}\sigma_{0}}\left((2r)^{p_{2}(2r)-n}\int_{B_{2r}}(1+\snr{Du}^{2})^{p_{2}(2r)/2} \ dx\right)^{\sigma_{0}}\mint_{B_{2r}}(1+\snr{Du}^{2})^{p_{2}(2r)/2} \ dx\\
\le &cr^{\kappa}\varepsilon^{\gamma_{1}\sigma_{0}}\int_{B_{2r}}(1+\snr{Du}^{2})^{p_{2}(2r)/2} \ dx,
\end{flalign*}
where $c(\texttt{data})$ and $\kappa:=\alpha-\gamma_{2}\sigma_{0}>0$ because of \eqref{sigma0}. Choosing now $\sigma$ as in \eqref{sigma}, using also Lemma \ref{bougeh} and \eqref{kom}, in a totally similar way we get
\begin{flalign*}
\snr{\mathcal{E}(v,B_{r})-\mathcal{G}(v,B_{r})}\le & cr^{\alpha+n}\mint_{B_{r}}1+\snr{Dv}^{(1+\sigma)p_{2}(2r)} \ dx\le cr^{\alpha+n}\mint_{B_{r}}1+\snr{Du}^{(1+\sigma)p_{2}(2r)} \ dx\\
\le& cr^{\alpha+n}\mint_{B_{r}}1+\snr{Du}^{(1+\sigma_{0})p(x)} \ dx\le cr^{\alpha+n}\left(\mint_{B_{2r}}(1+\snr{Du})^{p(x)/2} \ dx\right)^{1+\sigma_{0}}\\
\le& cr^{\kappa}\varepsilon^{\gamma_{1}\sigma_{0}}\int_{B_{2r}}(1+\snr{Du}^{2})^{p_{2}(2r)/2} \ dx,
\end{flalign*}
where we set $\varepsilon_{0}\equiv\sigma$ while applying Lemma \ref{L0} (\textit{i.}). Here $c(\texttt{data})$ and $\kappa>0$ is as before. All in all, remembering that, by definition, $0<\sigma<\sigma_{0}$, we can conclude
\begin{flalign}\label{24}
\int_{B_{r}}(\snr{Du}^{2}+\snr{Dv}^{2})^{\frac{p_{2}(2r)-2}{2}}\snr{Du-Dv}^{2} \ dx \le c\left[\varepsilon^{\frac{\gamma_{1}\sigma}{1+\sigma}}+r^{\kappa}\varepsilon^{\gamma_{1}\sigma}\right]\int_{B_{2r}}(1+\snr{Du}^{2})^{p_{2}(2r)/2} \ dx.
\end{flalign}
Since the next estimates will be slightly different for the cases $p_{2}(2r)\ge 2$ or $1<p_{2}(2r)<2$, we introduce the quantities
\begin{flalign*}
\kappa_{1}:=\begin{cases}
\ \kappa \quad & 2\le p_{2}(2r)\\
\ \frac{\kappa p_{2}(2r)}{2} \qquad & 1<p_{2}(2r)<2
\end{cases}, \quad \kappa_{2}:=\begin{cases}
\ \gamma_{1}\sigma \quad & 2\le p_{2}(2r)\\
\ \frac{\gamma_{1}\sigma p_{2}(2r)}{2} \quad & 1<p_{2}(2r)<2
\end{cases},\\\\
\kappa_{3}:=\begin{cases}
\ \frac{\gamma_{1}\sigma}{1+\sigma} \quad & 2\le p_{2}(2r)\\
\ \frac{\gamma_{1}p_{2}(2r)\sigma}{2(1+\sigma)}\quad & 1<p_{2}(2r)<2
\end{cases}.\quad \quad \quad \quad \quad \quad \quad \quad 
\end{flalign*}
Now, if $p_{2}(2r)\ge 2$, then we directly have
\begin{flalign}\label{36}
\int_{B_{r}}\snr{Du-Dv}^{p_{2}(2r)} \ dx \le& c\int_{B_{r}}(\snr{Du}^{2}+\snr{Dv}^{2})^{\frac{p_{2}(2r)-2}{2}}\snr{Du-Dv}^{2} \ dx \nonumber \\
\le &c\left[\varepsilon^{\frac{\gamma_{1}\sigma}{1+\sigma}}+r^{\kappa}\varepsilon^{\gamma_{1}\sigma}\right]\int_{B_{2r}}(1+\snr{Du}^{2})^{p_{2}(2r)/2} \ dx,
\end{flalign}
while, if $1<p_{2}(2r)<2$, by H\"older's inequality and the minimality of $v$ we obtain
\begin{flalign}\label{37}
\int_{B_{r}}\snr{Du-Dv}^{p_{2}(2r)} \ dx \le& \left(\int_{B_{r}}(\snr{Du}^{2}+\snr{Dv}^{2})^{\frac{p_{2}(2r)-2}{2}}\snr{Du-Dv}^{2} \ dx\right)^{\frac{p_{2}(2r)}{2}}\left(\int_{B_{r}}(\snr{Du}^{2}+\snr{Dv}^{2})^{\frac{p_{2}(2r)}{2}} \ dx\right)^{\frac{2-p_{2}(2r)}{2}}\nonumber \\
\le &c\left[\varepsilon^{\frac{\gamma_{1}\sigma p_{2}(2r)}{2(1+\sigma)}}+r^{\frac{\kappa p_{2}(2r)}{2}}\varepsilon^{\frac{\gamma_{1}\sigma p_{2}(2r)}{2}}\right]\int_{B_{2r}}(1+\snr{Du}^{2})^{p_{2}(2r)/2} \ dx.
\end{flalign}
Coupling estimates \eqref{36} and \eqref{37}, we can conclude in any case that
\begin{flalign}\label{38}
\int_{B_{r}}\snr{Du-Dv}^{p_{2}(2r)} \ dx \le c\left[\varepsilon^{\kappa_{3}}+r^{\kappa_{1}}\varepsilon^{\kappa_{2}}\right]\int_{B_{2r}}(1+\snr{Du}^{2})^{\frac{p_{2}(2r)}{2}} \ dx,
\end{flalign}
where $c=c(\texttt{data})$. As mentioned before, our choice of $\sigma_{0}$ assures the positivity of $\kappa_{1}$ as well.\\\\
\emph{Step 2: harmonic approximation.}
We aim to apply Lemma \ref{phihar} in order to obtain an unconstrained $p_{2}(2r)$-harmonic map suitably close to $v$. Hence, we need to transfer condition \eqref{small} from $u$ to $v$. From the minimality of $v$ in class $W^{1,p_{2}(2r)}_{u}(B_{r},\m)$ and \eqref{small} we see that
\begin{flalign}
E(B_{r}):=&\mint_{B_{r}}\snr{Dv}^{p_{2}(2r)} \ dx\le \lambda^{-1}\mint_{B_{r}}k_{0}\snr{Du}^{p_{2}(2r)} \ dx \nonumber \\
\le& \frac{ 2^{n-\gamma_{1}}\Lambda}{\lambda\omega_{n}}r^{-p_{2}(2r)}(2r)^{p_{2}(2r)-n}\int_{B_{2r}}(1+\snr{Du}^{2})^{p_{2}(2r)/2} \ dx\le c_{*}(\varepsilon/r)^{p_{2}(2r)}\label{smallv},
\end{flalign}
where we set $c_{*}:=\frac{ 2^{n-\gamma_{1}}\Lambda}{\lambda\omega_{n}}+1$. Now we claim that $v$ is approximately $p_{2}(2r)$-harmonic in the sense of \eqref{136}. This is actually the case: in fact, with reference to the terminology used in Lemma \ref{phihar}, let $\tilde{d}:=d_{2}$, where $d_{2}\in (0,1)$ is the exponent given by Lemma \ref{inpoi}, pick any $\tilde{\theta}\in (0,1)$ and let $\tilde{\delta}=\tilde{\delta}(\tilde{\theta},\tilde{d},p_{2}(2r))$ be the "closeness" parameter appearing in \eqref{136}. Moreover, for reasons that will be clear in a few lines, we also impose a first restriction on the size of $\varepsilon$. Precisely, keeping in mind the definition of $c_{*}$, we ask that
\begin{flalign}\label{har0}
\varepsilon\le \min\left\{\frac{\lambda\omega_{n}}{2^{n-\gamma_{1}}\Lambda+\lambda\omega_{n}},\left(\frac{\tilde{\delta}\lambda}{\Lambda \gamma_{2} c_{\m}+\lambda}\right)^{\frac{\gamma_{1}}{\gamma_{1}-1}}\right\}.
\end{flalign}
By \eqref{elv}, we estimate
\begin{flalign*}
&\left | \ \mint_{B_{r}}p_{2}(2r)\snr{Dv}^{p_{2}(2r)-2}Dv\cdot D\varphi \ dx\  \right |\nonumber \\
&\quad\quad\quad\quad\quad\quad\quad\quad= k_{0}^{-1}\left | \ \mint_{B_{r}}k_{0}p_{2}(2r)\snr{Dv}^{p_{2}(2r)-2}A_{v}(Dv,Dv)\varphi \ dx\  \right |\nonumber\\
&\quad\quad\quad\quad\quad\quad\quad\quad\stackleq{20}\frac{\Lambda \gamma_{2}c_{\m}}{\lambda}\mint_{B_{r}}\snr{Dv}^{p_{2}(2r)}\snr{\varphi} \ dx\le \tilde{c}\nr{D\varphi}_{L^{\infty}(B_{r})}rE(B_{r}),
\end{flalign*}
for all $\varphi \in C^{\infty}_{c}(B_{r},\mathbb{R}^{N})$, where $\tilde{c}:=1+\frac{\Lambda \gamma_{2}c_{\m}}{\lambda}$. For $\delta \in (0,1)$, by Young's inequality: $ab\le \delta a^{p}+\delta^{-\frac{1}{p-1}}b^{p'}$, with exponents $p_{2}(2r)$ and $p_{2}'(2r):=\frac{p_{2}(2r)}{p_{2}(2r)-1}$ we get
\begin{flalign*}
\tilde{c}E(B_{r})r\nr{D\varphi}_{L^{\infty}(B_{r})}\le&\delta^{-\frac{1}{p_{2}(2r)-1}}\tilde{c}^{p'(2r)}(rE(B_{r}))^{p'_{2}(2r)}+\delta \nr{D\varphi}^{p_{2}(2r)}_{L^{\infty}(B_{r})}\nonumber \\
\le &\delta^{-\frac{1}{\gamma_{1}-1}}\tilde{c}^{\gamma_{1}'}(rE(B_{r}))^{p'_{2}(2r)-1}(rE(B_{r}))+\delta \nr{D\varphi}^{p_{2}(2r)}_{L^{\infty}(B_{r})}\nonumber \\
\stackleq{smallv}&\delta^{-\frac{1}{\gamma_{1}-1}}\tilde{c}^{\gamma_{1}'}\left[rc_{*}(\varepsilon/r)^{p_{2}(2r)}\right]^{p_{2}(2r)-1}(rE(B_{r}))+\delta \nr{D\varphi}^{p_{2}(2r)}_{L^{\infty}(B_{r})}\\
\le &\delta^{-\frac{1}{\gamma_{1}-1}}\tilde{c}^{\gamma_{1}'}\left[c_{*}\varepsilon(\varepsilon/r)^{p_{2}(2r)-1}\right]^{p_{2}(2r)-1}(rE(B_{r}))+\delta \nr{D\varphi}^{p_{2}(2r)}_{L^{\infty}(B_{r})}\\
\stackleq{har0}&\delta^{-\frac{1}{\gamma_{1}-1}}\tilde{c}^{\gamma_{1}'}(\varepsilon/r)rE(B_{r})+\delta \nr{D\varphi}^{p_{2}(2r)}_{L^{\infty}(B_{r})}\le \delta^{-\frac{1}{\gamma_{1}-1}}\tilde{c}^{\gamma_{1}'}\varepsilon E(B_{r})+\delta \nr{D\varphi}^{p_{2}(2r)}_{L^{\infty}(B_{r})}.
\end{flalign*}
Choosing $\delta\equiv\tilde{\delta}$ in the previous display, we can conclude that
\begin{flalign*}
\left | \ \mint_{B_{r}}p_{2}(2r)\snr{Dv}^{p_{2}(2r)-2}Dv\cdot D\varphi \ dx\  \right |\stackleq{har0}\tilde{\delta}\mint_{B_{r}}\left(\snr{Dv}^{p_{2}(2r)}+\nr{D\varphi}_{L^{\infty}(B_{r})}^{p_{2}(2r)}\right) \ dx,
\end{flalign*}
thus Lemma \ref{phihar} renders a map $\tilde{h}\in v+W^{1,p_{2}(2r)}_{0}(B_{r},\RN)$ solution to \eqref{har1} with $p\equiv p_{2}(2r)$ and satisfying
\begin{flalign}\label{harrr}
\left(\mint_{B_{r}}\snr{Dv-D\tilde{h}}^{p_{2}(2r)d_{2}} \ dx\right)^{\frac{1}{d_{2}}}\le \tilde{\theta}\mint_{B_{r}}\snr{Dv}^{p_{2}(2r)} \ dx. 
\end{flalign}
Before going on, we would like to stress that $\varepsilon$ depends on $\tilde{\theta}$ as well, since by looking at the dependencies of $d_{2}$, it is evident that $\tilde{\delta}=\tilde{\delta}(n,\gamma_{1},\gamma_{2},\tilde{\theta})$, and \eqref{har0} yields in particular that $\varepsilon=\varepsilon(\tilde{\delta})$. This is not an obstruction, since the value of $\tilde{\theta}$ will be fixed in the next step as a function of $(\texttt{data})$.\\\\
\emph{Step 3: comparison, second time}.
First notice that, since $v$ is a solution to the frozen Dirichlet problem \eqref{fdp}, a Caccioppoli-type inequality holds. In fact, with the same strategy adopted for the proof of Lemma \ref{cacc}, we have
\begin{flalign}\label{caccv}
\mint_{B_{\rr}}\snr{Dv}^{p_{2}(2r)}\le c\mint_{B_{2\rr}}\left |\frac{v-(v)_{2\rr}}{\rr} \right |^{p_{2}(2r)} \ dx,
\end{flalign}
with $c=c(n,N,\m,\lambda,\Lambda,\gamma_{1},\gamma_{2})$ for all balls $B_{2\rr}\Subset B_{r}$. Now fix any $\rr \in (0,r/4)$. According to the previous estimates we can proceed in the following way
\begin{flalign*}
\int_{B_{\rr}}(1+\snr{Du}^{2})^{p_{2}(2r)/2}\le &c\rr^{n}+c\left\{\int_{B_{\rr}}\snr{Du-Dv}^{p_{2}(2r)} \ dx +\rr^{n}\mint_{B_{\rr}}\snr{Dv}^{p_{2}(2r)} \ dx\right\}\\
\stackleq{caccv}&c\rr^{n}+c\left\{\int_{B_{r}}\snr{Du-Dv}^{p_{2}(2r)} \ dx +\rr^{n}\mint_{B_{2\rr}}\left | \frac{v-(v)_{2\rr}}{\rr}\right |^{p_{2}(2r)} \ dx\right\}\\
\stackleq{38}&c\rr^{n}+c\left\{\left[\varepsilon^{\kappa_{3}}+r^{\kappa_{1}}\varepsilon^{\kappa_{2}}\right]\int_{B_{2r}}(1+\snr{Du}^{2})^{p_{2}(2r)/2} \ dx +\rr^{n}\left(\mint_{B_{2\rr}}\snr{Dv}^{p_{2}(2r)d_{2}} \ dx\right)^{\frac{1}{d_{2}}}\right\}\\
\le &c\rr^{n}+c\left[\varepsilon^{\kappa_{3}}+r^{\kappa_{1}}\varepsilon^{\kappa_{2}}\right]\int_{B_{2r}}(1+\snr{Du}^{2})^{p_{2}(2r)/2} \ dx\\
&+c\rr^{n}\left\{\left(\mint_{B_{2\rr}}\snr{Dv-D\tilde{h}}^{p_{2}(2r)d_{2}} \ dx\right)^{\frac{1}{d_{2}}}+\rr^{n}\mint_{B_{2\rr}}\snr{D\tilde{h}}^{p_{2}(2r)} \ dx\right\}\\
\stackleq{harrr}& c\rr^{n}+c\left[\varepsilon^{\kappa_{3}}+r^{\kappa_{1}}\varepsilon^{\kappa_{2}}\right]\int_{B_{2r}}(1+\snr{Du}^{2})^{p_{2}(2r)/2} \ dx\\
&+c(r/\rr)^{n(d_{2}^{-1}-1)}\tilde{\theta}\int_{B_{r}}\snr{Dv}^{p_{2}(2r)} \ dx+c\rr^{n}\mint_{B_{2\rr}}\snr{D\tilde{h}}^{p_{2}(2r)} \ dx\\
\stackleq{140}& c\rr^{n}+c\left[\varepsilon^{\kappa_{3}}+r^{\kappa_{1}}\varepsilon^{\kappa_{2}}+(r/\rr)^{n(d_{2}^{-1}-1)}\tilde{\theta}\right]\int_{B_{2r}}(1+\snr{Du}^{2})^{p_{2}(2r)/2} \ dx\\
&+c(\rr/r)^{n}\int_{B_{r}}\snr{Dv}^{p_{2}(2r)} \ dx\\
\le& c\rr^{n}+c\left[\varepsilon^{\kappa_{3}}+r^{\kappa_{1}}\varepsilon^{\kappa_{2}}+(r/\rr)^{n(d_{2}^{-1}-1)}\tilde{\theta}+(\rr/r)^{n}\right]\int_{B_{2r}}(1+\snr{Du}^{2})^{p_{2}(2r)/2} \ dx,
\end{flalign*}
where we also used Lemma \ref{inpoi}, the minimality of $v$ in the Dirichlet class $W^{1,p_{2}(2r)}_{u}(B_{r},\m)$ and of $\tilde{h}$ in $v+W^{1,p_{2}(2r)}_{0}(B_{r},\RN)$ and the reference estimate in $\eqref{140}_{1}$ which holds for $\tilde{h}$ since $\tilde{h}$ is a solution to \eqref{har1} with $p\equiv p_{2}(2r)$, (set $k_{0}\equiv 1$, $p\equiv p_{2}(2r)$, $g^{\alpha\beta}\equiv \delta^{\alpha\beta}$ and $h_{ij}\equiv \delta_{ij}$ for $\alpha,\beta \in \{1,\cdots,n\}$ and $i,j\in \{1,\cdots,N\}$ in \eqref{rev1}). Here $c=c(\texttt{data})$. For the ease of exposition, let us set $s\equiv2r$. Hence we can rewrite the previous estimate as
\begin{flalign}\label{39}
\int_{B_{\rr}}(1+\snr{Du}^{2})^{p_{2}(s)/2} \ dx \le c_{0}\rr^{n}+c_{1}\left[\varepsilon^{\kappa_{3}}+s^{\kappa_{1}}\varepsilon^{\kappa_{2}}+(s/\rr)^{n(d_{2}^{-1}-1)}\tilde{\theta}+(\rr/s)^{n}\right]\int_{B_{s}}(1+\snr{Du}^{2})^{p_{2}(s)/2} \ dx,
\end{flalign}
for $c_{0}=c_{0}(n,\gamma_{1},\gamma_{2})$ and $c_{1}=c_{1}(\texttt{data})$.\\\\
\emph{Step 4: Morrey-type estimates.} Our goal now is to prove a Morrey type estimate for the energy which will eventually lead to the continuity of solutions. For $\tau \in \left(0,\frac{1}{4}\right)$, we let $\rr\equiv\tau s$ in \eqref{39} and multiply both sides of it by $(\tau s)^{p_{2}(s)-n}$. We then have
\begin{flalign}\label{40}
(\tau s)^{p_{2}(s)-n}\int_{B_{\tau s}}&(1+\snr{Du}^{2})^{p_{2}(s)/2} \ dx\nonumber \\
\le& c_{1}\tau^{p_{2}(s)}\left\{\tau^{-n}(\varepsilon^{\kappa_{3}}+s^{\kappa_{1}}\varepsilon^{\kappa_{2}})+\tau^{-nd_{2}^{-1}}\tilde{\theta}+1\right\}s^{p_{2}(s)-n}\int_{B_{s}}(1+\snr{Du}^{2})^{p_{2}(s)/2} \ dx+c_{0}(\tau s)^{p_{2}(s)}.
\end{flalign}
Adopting the notation introduced in \cite{RT3}, we consider the following quantities:
\begin{flalign*}
&\phi(r,p):=\left(r^{p}\mint_{B_{r}}(1+\snr{Du}^{2})^{p/2} \ dx\right)^{\frac{1}{p}}=\omega_{n}^{-\frac{1}{p}}\left(r^{p-n}\int_{B_{r}}(1+\snr{Du}^{2})^{p/2} \ dx\right)^{\frac{1}{p}},\\
&\psi(r):=\phi(r,p_{2}(r)).
\end{flalign*}
In these terms, \eqref{40} reads as
\begin{flalign}\label{41}
\phi^{p_{2}(s)}(\tau s,p_{2}(s))\le c_{1}\tau^{p_{2}(s)}\left\{\tau^{-n}(\varepsilon^{\kappa_{3}}+s^{\kappa_{1}}\varepsilon^{\kappa_{2}})+\tau^{-nd_{2}^{-1}}\tilde{\theta}+1\right\}\psi^{p_{2}(s)}(s)+c_{0}(\tau s)^{p_{2}(s)},
\end{flalign}
so recalling that
\begin{flalign}\label{122}
\phi(r,p)\le \phi(r,q) \ \ \mbox{for} \ \ p\le q,
\end{flalign}
we obtain from \eqref{41}
\begin{flalign}\label{42}
\psi(\tau s)\le c_{2}\tau\left\{\tau^{-\frac{n}{p_{2}(s)}}\left[\varepsilon^{\frac{\kappa_{3}}{p_{2}(s)}}+s^{\frac{\kappa_{1}}{p_{2}(s)}}\varepsilon^{\frac{\kappa_{2}}{p_{2}(s)}}\right]+\tau^{-\frac{n}{d_{2}p_{2}(s)}}\tilde{\theta}^{\frac{1}{p_{2}(s)}}+1\right\}\psi(s)+c_{3}(\tau s),
\end{flalign}
where $c_{2}=c_{2}(\texttt{data})$ and $c_{3}=c_{3}(n,\gamma_{1},\gamma_{2})$. Since $\tau$, $s$, $\varepsilon$, $\tilde{\theta}\in (0,1)$, from the definitions of $\kappa_{1}$, $\kappa_{2}$ and $\kappa_{3}$ we have
\begin{flalign*}
\tau^{-\frac{n}{p_{2}(s)}}\le \tau^{-\frac{n}{\gamma_{1}}}, \quad \tau^{-\frac{n}{d_{2}p_{2}(s)}}\le \tau^{-\frac{n}{d_{2}\gamma_{1}}}, \quad s^{\frac{\kappa_{1}}{p_{2}(s)}}\le s^{\tilde{\kappa}_{1}}, \quad \varepsilon^{\frac{\kappa_{2}}{p_{2}(s)}}\le \varepsilon^{\tilde{\kappa}_{2}}, \quad \varepsilon^{\frac{\kappa_{3}}{p_{2}(s)}}\le \varepsilon^{\tilde{\kappa}_{3}},\quad \tilde{\theta}^{\frac{1}{p_{2}(s)}}\le \tilde{\theta}^{\frac{1}{\gamma_{2}}},
\end{flalign*}
where
\begin{flalign*}
\tilde{\kappa}_{1}:=\begin{cases} 
\ \frac{\kappa}{\gamma_{2}}\quad & 2\le p_{2}(s)\\
\ \frac{\kappa}{2}\quad & 1<p_{2}(s)<2
\end{cases}, \quad 
\tilde{\kappa}_{2}:=\begin{cases} 
\ \frac{\gamma_{1}\sigma}{\gamma_{2}}\quad & 2\le p_{2}(s)\\
\ \frac{\gamma_{1}\sigma}{2}\quad & 1<p_{2}(s)<2
\end{cases}, \quad \tilde{\kappa}_{3}:=\begin{cases} 
\ \frac{\gamma_{1}\sigma}{\gamma_{2}(1+\sigma)}\quad & 2\le p_{2}(s)\\
\ \frac{\gamma_{1}\sigma}{2(1+\sigma)}\quad & 1<p_{2}(s)<2
\end{cases},
\end{flalign*}
thus \eqref{42} becomes
\begin{flalign}\label{43}
\psi(\tau s)\le c_{2}\tau\left\{\tau^{-\frac{n}{\gamma_{1}}}\left(\varepsilon^{\tilde{\kappa}_{3}}+s^{\tilde{\kappa}_{1}}\varepsilon^{\tilde{\kappa}_{2}}\right)+\tau^{-\frac{n}{d_{2}\gamma_{1}}}\tilde{\theta}^{\frac{1}{\gamma_{2}}}+1\right\}\psi(s)+c_{3}(\tau s).
\end{flalign}
Now we need to make a proper choice of the parameters appearing in \eqref{43}. We first select any $\beta \in (0,1)$ and an $\eta\in (\beta,1)$ and ask that $c_{2}\tau \le\tau^{\eta}/5$, thus obtaining
\begin{flalign*}
\psi(\tau s)\le (\tau^{\eta}/5)\left\{\tau^{-\frac{n}{\gamma_{1}}}\left(\varepsilon^{\tilde{\kappa}_{3}}+s^{\tilde{\kappa}_{1}}\varepsilon^{\tilde{\kappa}_{2}}\right)+\tau^{-\frac{n}{d_{2}\gamma_{1}}}\tilde{\theta}^{\frac{1}{\gamma_{2}}}+1\right\}\psi(s)+c_{3}(\tau s)^{\beta}.
\end{flalign*}
Moreover, we require that the threshold radius $R_{*}$ introduced at the beginning of \emph{Step 1} satisfies $c_{3}(\tau R_{*})^{\beta}\le (\varepsilon/5)$. Finally we recall that $\tilde{\theta}$ is arbitrary, therefore we fix $\tilde{\theta}=2^{-\gamma_{2}}\tau^{\frac{n\gamma_{2}}{d_{2}\gamma_{1}}}$ and, since $\tilde{\kappa}_{2}\ge \tilde{\kappa}_{3}$ renders $\varepsilon^{\tilde{\kappa}_{2}}\le \varepsilon^{\tilde{\kappa}_{3}}$, the choice 
\begin{flalign*}
\varepsilon\le \left\{\left(\frac{ 2^{n-\gamma_{1}}\Lambda}{\lambda\omega_{n}}+1\right)^{-1},\left(\frac{\tilde{\delta}\lambda}{\Lambda \gamma_{2} c_{\m}+\lambda}\right)^{\frac{\gamma_{1}}{\gamma_{1}-1}},\tau^{\frac{n}{\gamma_{1}\tilde{\kappa}}}\right\}
\end{flalign*}
and \eqref{small} allow concluding that
\begin{flalign}\label{44}
\psi(\tau s)\le \tau^{\eta}\psi(s)+c_{3}(\tau s)^{\beta}\quad \mbox{and}\quad \psi(\tau s) \le\frac{4}{5}\tau^{\eta}\psi(s)+\frac{\varepsilon}{5}<\varepsilon.
\end{flalign}
We remark that, since $\eta$ is ultimately influenced only by the choice of $\beta$, we can incorporate the dependency from $\eta$ in the one from $\beta$, so the above procedure defines the following dependencies: $\tau=\tau(\texttt{data},\beta)$, $\varepsilon=(\texttt{data},\beta)$ and $R_{*}=R_{*}(\texttt{data},\beta)$. Estimate $\eqref{44}_{2}$ legalizes iterations, so we can repeat $\eqref{44}_{1}$ replacing $s$ by $\tau s$, $\tau^{2}s$, $\tau^{3}s$, $\cdots$ to get
\begin{flalign}\label{45}
\psi(\tau^{j+1}s)\le \tau^{\eta(j+1)}\psi(s)+c_{3}s^{\beta}\tau^{\beta (j+1)}\sum_{i=0}^{j}\tau^{i(\eta-\beta)}\le \tau^{(j+1)\eta}\psi(s)+c_{4}s^{\beta}\tau^{(j+1)\beta},
\end{flalign}
for $c_{4}=c_{4}(\texttt{data},\beta)$. Now, for any $\varsigma\in (0,s/8)$ we can find an integer $j\ge 1$ with $\tau^{j+1}s<\varsigma\le \tau^{j}s$, so, from \eqref{45} we obtain
\begin{flalign}\label{46}
\psi(\varsigma)\le &\tau^{1-\frac{n}{\gamma_{1}}}\psi(\tau^{j}s)\le \tau^{1-\frac{n}{\gamma_{1}}}\left\{\tau^{\eta j}\psi(s)+c_{4}s^{\beta}\tau^{\beta j}\right\}\nonumber \\
\le &\tau^{-\frac{n}{\gamma_{1}}}\left\{(\varsigma/s)^{\eta}\psi(s)+c_{4}s^{\beta}(\varsigma/s)^{\beta}\right\}\nonumber \\
\stackleq{small}&\tau^{-\frac{n}{\gamma_{1}}}\left\{(\varsigma/s)^{\beta}\varepsilon+c_{4}\varsigma^{\beta}\right\}\le c_{5}(\varsigma/s)^{\beta},
\end{flalign}
with $c_{5}=c_{5}(\texttt{data},\beta)$, while if $\varsigma\in [s/8,s)$, since $(s/\varsigma)\le 8$, there obviously holds
\begin{flalign}\label{n2}
\psi(\varsigma)\le 8^{\frac{n}{\gamma_{1}}+\beta-1}(\varsigma/s)^{\beta}\left\{\psi(s)+s^{\beta}\right\}\le 8^{\frac{n}{\gamma_{1}}+\beta-1}(\varsigma/s)^{\beta}(\varepsilon+s^{\beta})\le c(\varsigma/s)^{\beta},
\end{flalign}
again for $c=c(\texttt{data},\beta)$. We can actually improve estimates \eqref{46}-\eqref{n2} by getting rid of the restriction $s\le R_{*}$; we shall only retain $s\le \tilde{R}_{*}/2$. In fact, in case $0<\varsigma<R_{*}\le s\le \tilde{R}_{*}/2$, we see that
\begin{flalign}\label{n0}
\psi(\varsigma)\stackleq{46}c_{5}(\varsigma/R_{*})^{\beta}\le c(\varsigma/s)^{\beta},
\end{flalign}
with $c=c(\texttt{data},\beta)$, while, if $0<R_{*}<\varsigma\le s\le \tilde{R}_{*}/2$ we have
\begin{flalign}\label{n1}
\psi(\varsigma)\le &(s/\varsigma)^{\frac{n}{\gamma_{1}}-1}\psi(s)\le R_{*}^{1-\frac{n}{\gamma_{1}}}(R_{*}/\varsigma)^{\frac{n}{\gamma_{1}}-1+\beta}(\varsigma/s)^{\beta}\psi(s)\le c(\varsigma/s)^{\beta},
\end{flalign}
again for $c=c(\texttt{data},\beta)$. Now, by the continuity of Lebesgue's integral and of the mapping $y\mapsto p_{2}(y,s):= \sup_{x\in B_{s}(y)}p(x)$, we can conclude that if \eqref{small} holds for $B_{s}(x_{0})$ then it holds also on $B_{s}(y)$ for all $y$ belonging to a sufficiently small neighborhood of $x_{0}$. Hence, if we let
\begin{flalign*}
D_{0}:=\left\{y \in B_{\tilde{R}_{*}}(x_{0})\colon \left( s^{p_{2}(y,s)-n}\int_{B_{s}(y)}(1+\snr{Du}^{2})^{p_{2}(y,s)/2} \ dx\right)^{\frac{1}{p_{2}(y,s)}}<\varepsilon \ \mbox{for some} \ s\le \tilde{R}_{*}/2; \  B_{s}(y)\Subset B_{\tilde{R}_{*}}(x_{0})\right\},
\end{flalign*}
we see that it is open and, taking radii $0<\varsigma<s$, for $y \in D_{0}$ we have
\begin{flalign}\label{47}
\varsigma^{-n+\gamma_{1}-\beta\gamma_{1}}\int_{B_{\varsigma}(y)}\snr{Du}^{\gamma_{1}} \ dx=&\omega_{n}\varsigma^{-\beta\gamma_{1}}\phi^{\gamma_{1}}(\varsigma,\gamma_{1})\stackleq{122}\omega_{n}\varsigma^{-\beta\gamma_{1}}\psi^{\gamma_{1}}(\varsigma)\stackleq{46}c_{6}s^{-\beta\gamma_{1}},
\end{flalign}
for $c_{6}=c_{6}(\texttt{data},\beta)$, so, from Morrey's growth theorem, $u \in C^{0,\beta}(D_{0},\m)$. We would like to stress that $\beta$ is an arbitrary number in $(0,1)$ and, being the interval open it is always possible to find $\eta\in (0,1)$ so that $\beta<\eta<1$. Hence we can take any $\beta\in (0,1)$ in the above estimates and deduce from \eqref{47} that actually $u \in C^{0,\beta}(D_{0},\m)$ for all $\beta \in (0,1)$. Of course, the values of all the parameters involved will change accordingly to the one of $\beta$ (and consequently of $\eta$) we choose. After a standard covering argument, we obtain that $u \in C^{0,\beta}_{\mathrm{loc}}(\Omega_{0},\m)$ for any $\beta \in (0,1)$. Now consider any open subset $\tilde{\Omega}\Subset \Omega_{0}$. From \eqref{46} and a standard covering argument, we also obtain the Morrey type estimate
\begin{flalign}\label{mor}
\mint_{B_{\varsigma}}\snr{Du}^{p_{2}(\varsigma)} \ dx \le c_{7}\varsigma^{-\gamma_{2}(1-\beta)},
\end{flalign}
for all $B_{\varsigma}\Subset \tilde{\Omega}$, $\varsigma\le \tilde{R}_{*}/2$ and any $\beta \in (0,1)$. Here $c_{7}=c_{7}(\texttt{data},\beta)$.\\\\
\emph{Step 5: Hausdorff dimension of the Singular Set.} Given the characterization of $D_{0}$, we easily see that, if $\Sigma_{0}(u,B_{\tilde{R}_{*}}(x_{0})):=B_{\tilde{R}_{*}}(x_{0})\setminus D_{0}$, then
\begin{flalign*}
\Sigma_{0}(u,B_{\tilde{R}_{*}}(x_{0}))\subset \left\{y \in B_{\tilde{R}_{*}}(x_{0})\colon \ \limsup_{s\to 0}\left(s^{p_{2}(y,s)-n}\int_{B_{s}(y)}\snr{Du}^{p_{2}(y,s)} \ dy\right)^{\frac{1}{p_{2}(y,s)}}>0\right\}.
\end{flalign*}
Now, if $p_{m}(x_{0},\tilde{R}_{*}):=\inf_{x\in B_{\tilde{R}_{*}}(x_{0})}p(x)$, then, as in \eqref{kom},
\begin{flalign}\label{61}
p_{2}(y,s)\le (1+\sigma_{0})p_{m}(x_{0},\tilde{R}_{*}) \ \ \mbox{for all} \ \ 0<s\le \tilde{R}_{*}/2, \ \ B_{s}(y)\Subset B_{\tilde{R}_{*}}(x_{0}),
\end{flalign}
so we obtain,
\begin{flalign*}
\left(s^{p_{2}(y,s)}\mint_{B_{s}(y)}\snr{Du}^{p_{2}(y,s)} \ dx\right)^{\frac{1}{p_{2}(y,s)}} \stackleqq{61}&\left(s^{p_{m}(x_{0},\tilde{R}_{*})(1+\sigma_{0})}\mint_{B_{s}(y)}\snr{Du}^{p_{m}(x_{0},\tilde{R}_{*})(1+\sigma_{0})} \ dx\right)^{\frac{1}{p_{m}(x_{0},\tilde{R}_{*})(1+\sigma_{0})}}\\
\leq&  \left(s^{p_{m}(x_{0},\tilde{R}_{*})(1+\delta_{0})}\mint_{B_{s}(y)}\snr{Du}^{p_{m}(x_{0},\tilde{R}_{*})(1+\delta_{0})} \ dx\right)^{\frac{1}{p_{m}(x_{0},\tilde{R}_{*})(1+\delta_{0})}},
\end{flalign*}
where we also used that $\sigma_{0}<\delta_{0}$. Hence, if $y \in B_{\tilde{R}_{*}}(x_{0})$ is such that
\begin{flalign*}
0<\limsup_{s\to 0}\left(s^{p_{2}(y,s)-n}\int_{B_{s}(y)}\snr{Du}^{p_{2}(y,s)} \ dx\right)^{\frac{1}{p_{2}(y,s)}},
\end{flalign*}
then 
\begin{flalign*}
0<\limsup_{s\to 0}\left(s^{p_{m}(x_{0},\tilde{R}_{*})(1+\delta_{0})-n}\int_{B_{s}(y)}\snr{Du}^{p_{m}(x_{0},\tilde{R}_{*})(1+\delta_{0})} \ dx\right)^{\frac{1}{p_{m}(x_{0},\tilde{R}_{*})(1+\delta_{0})}}.
\end{flalign*}
This allows to conclude that
\begin{flalign*}
\Sigma_{0}(u,B_{\tilde{R}_{*}}(x_{0}))\subset \left\{y \in B_{\tilde{R}_{*}(x_{0})}\colon \limsup_{s\to 0}\left(s^{p_{m}(x_{0},\tilde{R}_{*})(1+\delta_{0})-n}\int_{B_{s}(y)}\snr{Du}^{p_{m}(x_{0},\tilde{R}_{*})(1+\delta_{0})} \ dx\right)^{\frac{1}{p_{m}(x_{0},\tilde{R}_{*})(1+\delta_{0})}}>0\right\}=:D_{1}.
\end{flalign*}
By \cite[Proposition 2.7]{giu} it follows that $\dim_{\mathcal{H}}(D_{1})\le n-p_{m}(x_{0},\tilde{R}_{*})(1+\delta_{0})$. Now, covering $\Omega$ with balls having the same features of $B_{\tilde{R}_{*}}(x_{0})$ and remembering that $p_{m}(x_{0},\tilde{R}_{*})\ge \gamma_{1}$, we obtain that
$
\dim_{\mathcal{H}}(\Sigma_{0}(u))\le n-\gamma_{1}(1+\delta_{0})<n-\gamma_{1},
$
and so $\dim_{\mathcal{H}}(\Sigma_{0}(u))<n-\gamma_{1}$.
\\\\
\emph{Step 6: partial $C^{1,\beta_{0}}$-regularity.} 
In this part we follow the approach of \cite[Theorem 3.1]{harlin}. So far we know that the regular set $\Omega_{0}\subset \Omega$ is a relatively open set of full $n$-dimensional Lebesgue measure and $u\in C^{0,\beta}_{\mathrm{loc}}(\Omega_{0},\m)$ for all $\beta \in (0,1)$. For reasons that will be clear in a few lines, we fix
\begin{flalign}\label{beta}
\tilde{\beta}:=\max\left\{\frac{1}{2},1-\frac{1}{4\gamma_{2}}\min\left\{\frac{1}{2},\alpha-n\sigma_{0},\frac{\gamma_{1}\sigma}{2(1+\sigma)}\right\}\right\}\in (0,1),
\end{flalign}
where $\sigma_{0}$, $\sigma$ are as in \eqref{sigma0}-\eqref{sigma} respectively, and two open subsets $\tilde{\Omega}\Subset\Omega'\Subset \Omega_{0}$. Given the expression of $\tilde{\beta}$, we shall incorporate any dependency from $\tilde{\beta}$ of the constants appearing in the forthcoming estimates into the one from $(\texttt{data})$. We cover $\tilde{\Omega}$ with finitely many balls contained in $\Omega'$, (with size and number depending only on $\m$, $[u]_{0,\tilde{\beta};\Omega'}$ and on $\diam(\Omega')$), whose image lies in small coordinate neighborhoods of $\m$. Precisely, by the continuity of $u$ and up to scaling, rotating and translating $\m$ we can now assume that $u(\tilde{\Omega})$ is contained into the image of a single chart $f(B_{1}^{m})$, so we can find an $\omega\colon \tilde{\Omega}\to \mathbb{R}^{m}$ such that $u=f(\omega)$ and $\snr{\omega}\le 1$. Here $f\colon \mathbb{R}^{m}\mapsto \m$ is such that
\begin{flalign}\label{f}
\nr{\nabla f}_{L^{\infty}(B^{m}_{4m})}\le c(\m),\quad \nr{\nabla^{2} f}_{L^{\infty}(B_{4m}^{m})}\le c(\m) \quad \mathrm{and} \quad \snr{\nabla (f^{-1})(u)}\le c(\m).
\end{flalign}
The above conditions are for instance satisfied by the
inverse of the stereographic projection
\begin{flalign*}
S\colon \mathbb{R}^{N-1}\ni y\mapsto \left(\frac{\snr{y}^{2}-1}{\snr{y}^{2}+1},\frac{2y}{\snr{y}^{2}+1}\right)\in \SN,
\end{flalign*}
see \cite{demi,harlin}. From $\eqref{f}_{3}$ we get that
\begin{flalign}\label{f13}
\int_{U}\snr{D\omega}^{p(x)} \ dx \le c\int_{U}\snr{Du}^{p(x)} \ dx,
\end{flalign}
for any $U\subseteq \tilde{\Omega}$, with $c=c(\m,\gamma_{1},\gamma_{2})$. Since $u$ is an $\m$-constrained local minimizer of \eqref{cvp}, then $\omega$ minimizes the variational integral
\begin{flalign}\label{uvp}
W^{1,p(\cdot)}(\tilde{\Omega},\mathbb{R}^{m})\ni \zeta \mapsto \mathcal{H}(\zeta,\tilde{\Omega}):=\int_{\tilde{\Omega}}k(x)(\delta^{\alpha\beta}h_{ij}(\zeta)D_{\alpha}\zeta^{i}D_{\beta}\zeta^{j})^{p(x)/2} \ dx,
\end{flalign}
where $(\delta^{\alpha\beta})_{\alpha\beta}$ is the $n\times n$ identity matrix and $(h_{ij})_{ij}$ is the $m\times m$ symmetric matrix $((\nabla f)^{T}\nabla f)_{ij}$. From \eqref{f} and being $f$ a chart, $(h_{ij})_{ij}$ is uniformly elliptic and uniformly bounded, in the sense that 
\begin{flalign*}
\sup_{i,j \in \{1,\cdots,m\}}\nr{h_{ij}}_{L^{\infty}(B^{m}_{4m})}<c \quad \mbox{and} \quad c_{1}\snr{\zeta}^{2}\le h_{ij}(y)\zeta^{i}\zeta^{j}\le c_{2}\snr{\zeta}^{2}
\end{flalign*}
for all $\zeta \in \mathbb{R}^{m\times m}$, whenever $\snr{y}\le 4m$. Here $c$, $c_{1}$, $c_{2}$ depend only on $\m$. Given the previous considerations, it is easy to see that the integrand
\begin{flalign*}
H(x,y,z):=k(x)(\delta^{\alpha\beta}h_{ij}(y)z_{\alpha}^{i}z_{\beta}^{j})^{p(x)/2}
\end{flalign*}
satisfies the following set of conditions:
\begin{flalign}\label{assh}
\begin{cases}
\ c_{1}\snr{z}^{p(x)}\le H(x,y,z)\le c_{2}\snr{z}^{p(x)} \\
\ \snr{H(x_{1},y,z)-H(x_{2},y,z)}\le c_{\varepsilon_{0}}\snr{x_{1}-x_{2}}^{\alpha}\left(1+\snr{z}^{(1+\varepsilon_{0})\max\{p(x_{1}),p(x_{2})\}}\right) \ \ \mbox{for any }\varepsilon_{0}>0\\
\ \snr{H(x,y_{1},z)-H(x,y_{2},z)}\le c\snr{y_{1}-y_{2}}\snr{z}^{p(x)}\\
\ \snr{\partial H(x,y,z)}\snr{z}+\snr{\partial^{2}H(x,y,z)}\snr{z}^{2}\le c\snr{z}^{p(x)}\\
\ \langle\partial^{2}H(x,y,z)\xi,\xi\rangle\ge c\snr{z}^{p(x)-2}\snr{\xi}^{2},
\end{cases}
\end{flalign}
for $\snr{y}\le 4m$. Here, all the constants depend only on $m$, $\m$, $\lambda$, $\Lambda$, $\gamma_{1}$, $\gamma_{2}$, $[k]_{0,\alpha}$, $[p]_{0,\alpha}$ and $\alpha$, except for $c_{\varepsilon_{0}}$, which, in addition, depends also from $\varepsilon_{0}$. In particular, from $\eqref{assh}_{1}$, we see that $\omega$ minimizes a functional controlled from below and above by the $p(\cdot)$-laplacean energy, so there is no loss of generality in assuming that Lemmas \ref{cacc} and \ref{inngeh} (and \ref{bougeh} for the associated frozen problem) hold true with the same parameters as before. Moreover, $\eqref{f}_{3}$, \eqref{f13} and \eqref{mor} allow transferring regularity from $u$ to $\omega$. In fact we have
\begin{flalign}\label{morom}
\omega \in C^{0,\beta}(\tilde{\Omega},\mathbb{R}^{m}), \quad [\omega]_{0,\beta;\tilde{\Omega}}\le c(\m)[u]_{0,\beta;\tilde{\Omega}}, \quad \mint_{B_{\rr}}\snr{D\omega}^{p_{2}(\rr)} \ dx\le c\rr^{-\gamma_{2}(1-\beta)},
\end{flalign}
for any $\beta \in (0,1)$ and all $B_{\rr}\Subset \tilde{\Omega}$. Notice that, by \eqref{f13} and $\eqref{morom}_{2}$ we can incorporate any dependency from $\nr{(\snr{D\omega})^{p(\cdot)}}_{L^{1}(\tilde{\Omega})}$ or from $[\omega]_{0,\beta;\tilde{\Omega}}$ of the constants in the forthcoming estimates into the one from $\nr{(\snr{Du})^{p(\cdot)}}_{L^{1}(\tilde{\Omega})}$ or from $[u]_{0,\beta;\tilde{\Omega}}$. In \eqref{morom} we are going to choose $\beta\equiv\tilde{\beta}$, where $\tilde{\beta}$ is as in \eqref{beta}. Let $\sigma_{0}$, $\tilde{R}_{*}$ and $\sigma$ be as in \eqref{sigma0}, \eqref{18} and \eqref{sigma} respectively and fix any ball $B_{\rr}\Subset B_{\tilde{R}_{*}} \Subset \tilde{\Omega}$, $\rr\le \tilde{R}_{*}/2$ and let $\vartheta \in W^{1,p_{2}(\rr)}(B_{\rr/4},\mathbb{R}^{m})$ be the solution to the frozen Dirichlet problem
\begin{flalign}\label{fudp}
\omega+W^{1,p_{2}(\rr)}_{0}(B_{\rr/4},\mathbb{R}^{m})\ni \zeta \mapsto \min \int_{B_{\rr/4}}k_{0}(\delta^{\alpha\beta}h_{ij}((\omega)_{\rr/4})D_{\alpha}\zeta^{i}D_{\beta}\zeta^{j})^{p(\rr)/2} \ dx,
\end{flalign}
where $k_{0}$ is the value of $k(\cdot)$ in the centre of $B_{\rr/4}$. For simplicity, define $H_{0}(y,z):=k_{0}(\delta^{\alpha\beta}h_{ij}(y)z_{\alpha}^{i}z_{\beta}^{j})^{p_{2}(\rr)/2}$, and notice that, since $\snr{(\omega)_{\rr/4}}\le 1$, then the integrand $H_{0}((\omega)_{\rr/4},z)$ is of the type covered by Proposition \ref{rif}, see \cite{acefus1,giagiudiff,giamod}. Furthermore, given the specific structure of the integrand, the Maximum principle in \cite{dotleomus} applies, thus $\sup_{x\in B_{\rr/4}}\snr{\vartheta(x)}\le m$. By \eqref{kom}, $\omega$ is an admissible competitor for $\vartheta$ in problem \eqref{fudp} and, as a consequence
\begin{flalign}\label{uelv}
\int_{B_{\rr/4}}\partial H_{0}((\omega)_{\rr/4},D\vartheta)(D\omega-D\vartheta) \ dx =0.
\end{flalign}
Taking into account $\eqref{assh}_{5}$ (with $p_{2}(\rr)$ instead of $p(x)$) and \eqref{uelv} we then estimate
\begin{flalign*}
c\int_{B_{\rr/4}}&(\snr{D\omega}^{2}+\snr{D\vartheta}^{2})^{\frac{p_{2}(\rr)-2}{2}}\snr{D\omega-D\vartheta}^{2} \ dx +c\int_{B_{\rr/4}}\partial H_{0}((\omega)_{\rr/4},D\vartheta)(D\omega-D\vartheta) \ dx\\
=&c\int_{B_{\rr/4}}(\snr{D\omega}^{2}+\snr{D\vartheta}^{2})^{\frac{p_{2}(\rr)-2}{2}}\snr{D\omega-D\vartheta}^{2} \ dx \le \int_{B_{\rr/4}}H_{0}((\omega)_{\rr/4},D\omega)-H_{0}((\omega)_{\rr/4},D\vartheta) \ dx\\
=&\int_{B_{\rr/4}}H_{0}((\omega)_{\rr/4},D\omega)-H(x,(\omega)_{\rr/4},D\omega) \ dx+\int_{B_{\rr/4}}H(x,(\omega)_{\rr/4},D\omega)-H(x,\omega,D\omega) \ dx \\
&+\int_{B_{\rr/4}}H(x,\omega,D\omega)-H(x,\vartheta,D\vartheta) \ dx +\int_{B_{\rr/4}}H(x,\vartheta,D\vartheta)-H(x,(\vartheta)_{\rr/4},D\vartheta) \ dx\\
&+\int_{B_{\rr/4}}H(x,(\vartheta)_{\rho/4},D\vartheta)-H_{0}((\vartheta)_{\rr/4},D\vartheta) \ dx +\int_{B_{\rr/4}}H_{0}((\vartheta)_{\rr/4},D\vartheta)-H_{0}((\omega)_{\rr/4},D\vartheta) \ dx=\sum_{i=1}^{6}\mbox{(I)}_{i},
\end{flalign*}
where $c=c(m,\m,\lambda,\Lambda,\gamma_{1},\gamma_{2},[k]_{0,\alpha},[p]_{0,\alpha})$. Before estimating terms $\mbox{(I)}_{1}$-$\mbox{(I)}_{6}$, let us take care of some quantities which will be recurrent in the forthcoming estimates. By $\eqref{morom}_{1,2}$, we easily have
\begin{flalign}
&\sup_{x\in B_{\rr/4}}\snr{\omega(x)-(\omega)_{\rr/4}}\le c\rr^{\tilde{\beta}}\label{gr0},
\end{flalign}
with $c=c(\m,[u]_{0,\tilde{\beta};\tilde{\Omega}})$. Moreover, it follows from the convex-hull property in \cite{dotleomus} that
\begin{flalign*}
\sup_{x,y\in B_{\rr/4}}\snr{\vartheta(x)-\vartheta(y)}\le \sup_{x,y\in \partial B_{\rr/4}}\snr{\omega(x)-\omega(y)}\stackrel{\eqref{morom}_{2}}{\le} c\rr^{\tilde{\beta}},
\end{flalign*}
for $c=c(\m,[u]_{0,\tilde{\beta};\tilde{\Omega}})$, therefore
\begin{flalign}\label{gr1}
\sup_{x \in B_{\rr/4}}\snr{\vartheta(x)-(\vartheta)_{\rr/4}}\le c\rr^{\tilde{\beta}}.
\end{flalign}
Finally, from Poincar\'e's and H\"older's inequalities, Lemmas \ref{inngeh}, \ref{L0} (\textit{ii.}), \ref{cacc} and by the minimality of $\vartheta$ we see that
\begin{flalign}
\mint_{B_{\rr/4}}\snr{(\omega)_{\rr/4}-(\vartheta)_{\rr/4}}^{p_{2}(\rr)} \ dx\stackrel{}{\le}&c\rr^{p_{2}(\rr)}\mint_{B_{\rr}/4}\snr{D\omega}^{p_{2}(\rr)} \ dx\stackleqq{kom} c\rr^{p_{2}(\rr)}\left(\mint_{B_{\rr/4}}\snr{D\omega}^{(1+\sigma_{0})p(x)} \ dx\right)^{\frac{p_{2}(\rr)}{(1+\sigma_{0})p_{1}(\rr)}}\nonumber \\
\stackrel{}{\le}&c\rr^{p_{2}(\rr)}\left(\mint_{B_{\rr/2}}(1+\snr{D\omega}^{2})^{p(x)/2} \ dx\right)^{\frac{p_{2}(\rr)-p_{1}(\rr)}{p_{1}(\rr)}}\mint_{B_{\rr/2}}(1+\snr{D\omega}^{2})^{p(x)/2} \ dx\nonumber \\
\stackrel{}{\le}&c\rr^{p_{2}(\rr)}\mint_{B_{\rr}}1+\left |\frac{\omega-(\omega)_{\rr}}{\rr} \right |^{p(x)} \ dx\stackrel{\eqref{morom}_{1,2}}{\le}c\rr^{p_{2}(\rr)+(\tilde{\beta}-1)p_{2}(\rr)}= c\rr^{\tilde{\beta}p_{2}(\rr)},\label{gr2}
\end{flalign}
with $c=c(\texttt{data},\nr{(\snr{Du})^{p(\cdot)}}_{L^{1}(\tilde{\Omega})},[u]_{0,\tilde{\beta};\tilde{\Omega}})$. From $\eqref{assh}_{2}$ with $\varepsilon_{0}\equiv\sigma_{0}/2$ and Lemma \ref{inngeh} we get
\begin{flalign}\label{gr3}
\snr{\mbox{(I)}_{1}}\le& c\rr^{\alpha+n}\mint_{B_{\rr/4}}1+\snr{D\omega}^{(1+\varepsilon_{0})p_{2}(\rr)} \ dx\stackleq{kom} c\rr^{\alpha+n}\mint_{B_{\rr/4}}(1+\snr{D\omega}^{2})^{(1+\sigma_{0})p(x)/2} \ dx\nonumber \\
\le &c\rr^{\alpha+n}\left(\mint_{B_{\rr/2}}(1+\snr{D\omega})^{p(x)/2} \ dx\right)^{\sigma_{0}}\mint_{B_{\rr}}(1+\snr{D\omega}^{2})^{p_{2}(\rr)/2} \ dx\nonumber \\
\le& c\rr^{\bar{\kappa}_{1}}\int_{B_{\rr}}(1+\snr{D\omega}^{2})^{p_{2}(\rr)/2} \ dx,
\end{flalign}
where $c=c(\texttt{data},\nr{(\snr{Du})^{p(\cdot)}}_{L^{1}(\tilde{\Omega})})$ and $\bar{\kappa}_{1}:=\alpha-n\sigma_{0}>0$ by \eqref{sigma0}. Now, from $\eqref{assh}_{3}$ and \eqref{gr0} we have
\begin{flalign}\label{gr4}
\snr{\mbox{(I)}_{2}}\le &c\int_{B_{\rr/4}}\snr{\omega-(\omega)_{\rr/4}}\snr{D\omega}^{p(x)} \ dx \le c\rr^{\tilde{\beta}}\int_{B_{\rr}}(1+\snr{D\omega}^{2})^{p_{2}(\rr)/2} \ dx,
\end{flalign}
for $c=c(\texttt{data},[u]_{0,\tilde{\beta};\tilde{\Omega}})$. Since $\omega$ is a local minimizer of \eqref{uvp}, then
\begin{flalign}\label{gr5}
\mbox{(I)}_{3}\le 0.
\end{flalign}
Concerning term $\mbox{(I)}_{4}$, we use $\eqref{assh}_{3}$, \eqref{gr1} and the minimality of $\vartheta$ to bound
\begin{flalign}\label{gr6}
\snr{\mbox{(I)}_{4}}\le& c\int_{B_{\rr/4}}\snr{\vartheta-(\vartheta)_{\rr/4}}\snr{D\vartheta}^{p(x)} \ dx\nonumber\\
\le& c\rr^{\tilde{\beta}}\int_{B_{\rr/4}}(1+\snr{D\vartheta}^{2})^{p_{2}(\rr)/2} \ dx\le c\rr^{\tilde{\beta}}\int_{B_{\rr}}(1+\snr{D\omega}^{2})^{p_{2}(\rr)/2} \ dx,
\end{flalign}
with $c=c(\texttt{data},[u]_{0,\tilde{\beta};\tilde{\Omega}})$. To take care of term $\mbox{(I)}_{5}$, we use $\eqref{assh}_{2}$ with $\varepsilon_{0}\equiv\sigma$ again together with the minimality of $\vartheta$ and Lemmas \ref{bougeh}-\ref{inngeh} to obtain
\begin{flalign}\label{gr7}
\snr{\mbox{(I)}_{5}}\le& c\rr^{\alpha+n}\mint_{B_{\rr/4}}1+\snr{D\vartheta}^{(1+\sigma)p_{2}(\rr)} \ dx\le c\rr^{\alpha+n}\mint_{B_{\rr/4}}1+\snr{D\omega}^{(1+\sigma)p_{2}(\rr)} \ dx\nonumber\\
\le &c\rr^{\alpha+n}\mint_{B_{\rr/4}}1+\snr{D\omega}^{(1+\sigma_{0}/2)p_{2}(\rr)} \ dx \le cr^{\alpha+n}\left(\mint_{B_{\rr/2}}(1+\snr{D\omega}^{2})^{p(x)/2} \ dx\right)^{1+\sigma_{0}}\nonumber \\
\le &c\rr^{\bar{\kappa}_{1}}\int_{B_{\rr}}(1+\snr{D\omega}^{2})^{p_{2}(\rr)/2} \ dx,
\end{flalign}
with $c=c(\texttt{data},\nr{(\snr{Du})^{p(\cdot)}}_{L^{1}(\tilde{\Omega})})$. Finally, by $\eqref{assh}_{3}$, Lemmas \ref{bougeh}-\ref{inngeh} and \eqref{gr2} we obtain
\begin{flalign}\label{gr8}
\snr{\mbox{(I)}_{6}}\le &c\rr^{n}\mint_{B_{\rr/4}}\snr{(\omega)_{\rr/4}-(\vartheta)_{\rr/4}}\snr{D\vartheta}^{p_{2}(\rr)} \ dx\nonumber\\
\le& c\rr^{n}\left(\mint_{B_{\rr/4}}\snr{(\omega)_{\rr/4}-(\vartheta)_{\rr/4}}^{p_{2}(\rr)} \ dx\right)^{\frac{\sigma}{1+\sigma}}\left(\mint_{B_{\rr/4}}\snr{D\vartheta}^{(1+\sigma)p_{2}(\rr)} \ dx\right)^{\frac{1}{1+\sigma}}\nonumber\\
\le &c\rr^{n+\frac{\tilde{\beta}\gamma_{1}\sigma}{1+\sigma}}\left(\mint_{B_{\rr/4}}\snr{D\omega}^{1+\sigma_{0}p(x)} \ dx\right)^{\frac{p_{2}(\rr)}{(1+\sigma_{0})p_{1}(\rr)}}\nonumber\\
\le &c\rr^{n+\frac{\tilde{\beta}\gamma_{1}\sigma}{1+\sigma}}\left(\mint_{B_{\rr/2}}(1+\snr{D\omega}^{2})^{p(x)/2}\right)^{\frac{p_{2}(\rr)-p_{1}(\rr)}{p_{1}(\rr)}}\mint_{B_{\rr}}(1+\snr{D\omega}^{2})^{p_{2}(\rr)/2} \ dx\nonumber\\
\le& c\rr^{\frac{\tilde{\beta}\gamma_{1}\sigma}{1+\sigma}}\mint_{B_{\rr}}(1+\snr{D\omega}^{2})^{p_{2}(\rr)/2} \ dx,
\end{flalign}
where $c=c(\texttt{data},\nr{(\snr{Du})}_{L^{1}(\tilde{\Omega})},[u]_{0,\tilde{\beta};\tilde{\Omega}})$. Collecting estimates \eqref{gr3}-\eqref{gr8} we can conclude that
\begin{flalign*}
\int_{B_{\rr/4}}(\snr{D\omega}^{2}+\snr{D\vartheta}^{2})^{\frac{p_{2}(\rr)-2}{2}} \ dx \le c\left(\rr^{\frac{\tilde{\beta}\gamma_{1}\sigma}{1+\sigma}}+\rr^{\bar{\kappa}_{1}}+\rr^{\tilde{\beta}}\right)\int_{B_{\rr}}(1+\snr{D\omega}^{2})^{p_{2}(\rr)/2} \ dx,
\end{flalign*}
with $c=c(\texttt{data},\nr{(\snr{Du})}_{L^{1}(\tilde{\Omega})},[u]_{0,\tilde{\beta};\tilde{\Omega}})$. Manipulating the content of the previous display as we did in \emph{Step 1}, estimates \eqref{36}-\eqref{37} we can conclude that
\begin{flalign}\label{gr9}
\int_{B_{\rr/4}}\snr{D\omega-D\vartheta}^{p_{2}(\rr)} \ dx \le c\left(\rr^{\frac{\tilde{\beta}\gamma_{1}\sigma}{2(1+\sigma)}}+\rr^{\frac{\bar{\kappa}_{1}}{2}}+\rr^{\frac{\tilde{\beta}}{2}}\right)\int_{B_{\rr}}(1+\snr{D\omega}^{2})^{p_{2}(\rr)/2} \ dx.
\end{flalign}
Recalling also that, by \eqref{beta}, $\tilde{\beta}\ge 1/2$, we can rewrite \eqref{gr9} as
\begin{flalign}\label{gr10}
\int_{B_{\rr/4}}\snr{D\omega-D\vartheta}^{p_{2}(\rr)} \ dx \le c\rr^{\kappa_{2}}\int_{B_{\rr}}(1+\snr{D\omega}^{2})^{p_{2}(\rr)/2} \ dx,
\end{flalign}
with $\kappa_{2}:=\frac{1}{2}\min\left\{\frac{1}{2},\bar{\kappa}_{1},\frac{\gamma_{1}\sigma}{2(1+\sigma)}\right\}$. Averaging in \eqref{gr10} and using \eqref{beta} again, we readily see that
\begin{flalign}
\mint_{B_{\rr/4}}\snr{D\omega-D\vartheta}^{p_{2}(\rr)} \ dx\stackrel{\eqref{morom}_{3}}{\le} c\rr^{\hat{\kappa}},\label{gr11}
\end{flalign}
for $c=c(\texttt{data},\nr{(\snr{Du}^{p(\cdot)})}_{L^{1}(\tilde{\Omega})},[u]_{0,\tilde{\beta};\tilde{\Omega}})$. Here $\hat{\kappa}:=\kappa_{2}-\gamma_{2}(1-\tilde{\beta})\ge \kappa_{2}/2>0$. Now fix any $0<\varsigma<\varrho/8$ and notice that, being $\vartheta$ a solution to \eqref{fudp}, the decay estimate $\eqref{140}_{2}$ holds true. So we estimate
\begin{flalign}\label{s612}
\mint_{B_{\varsigma}}\snr{D\omega-(D\omega)_{\varsigma}}^{p_{2}(\rr)}\ dx\le &c\left\{(\rr/\varsigma)^{n}\mint_{B_{\rr/4}}\snr{D\omega-D\vartheta}^{p_{2}(\rr)}\ dx+\mint_{B_{\varsigma}}\snr{D\vartheta-(D\vartheta)_{\varsigma}}^{p_{2}(\rr)} \ dx\right\}\nonumber \\
\stackleq{gr11}&c\left\{(\rr/\varsigma)^{n}\rr^{\hat{\kappa}}+(\varsigma/\rr)^{\mu p_{2}(\rr)}\mint_{B_{\rr/4}}\snr{D\omega}^{p_{2}(\rr)} \ dx\right\}\nonumber \\
\stackleq{morom}&c\left\{(\rr/\varsigma)^{n}\rr^{\hat{\kappa}}+(\varsigma/\rr)^{\mu p_{2}(\rr)}\rr^{-\gamma_{2}(1-\beta)}\right\},
\end{flalign}
with $\beta \in (0,1)$ still to be fixed and $c=c(\texttt{data},\nr{(\snr{Du})^{p(\cdot)}}_{L^{1}(\tilde{\Omega})},[u]_{0,\tilde{\beta};\tilde{\Omega}},\beta)$. Set $\beta:=1-\frac{\mu\gamma_{1}\hat{\kappa}}{2n\gamma_{2}}$ in \eqref{s612} and pick $\varsigma=\rr^{1+a}/2$ with $a:=\frac{\hat{\kappa}(2n+\mu\gamma_{1})}{2n(n+\mu p_{2}(\rr))}$. In these terms, \eqref{s612} reads as
\begin{flalign*}
\mint_{B_{\varsigma}}\snr{D\omega-(D\omega)_{\varsigma}}^{p_{2}(\rr)} \ dx \le &c\left\{\varsigma^{\frac{-an+\hat{\kappa}}{1+a}}+\varsigma^{\frac{a\mu p_{2}(\rr)-\gamma_{2}(1-\beta)}{1+a}}\right\}\le c\varsigma^{\frac{n\mu\hat{\kappa}\gamma_{1}}{2n(n+\mu\gamma_{2})+\hat{\kappa}(2n+\mu\gamma_{1})}}=c\varsigma^{\beta_{0}\gamma_{2}},
\end{flalign*}
where we also denoted
\begin{flalign}\label{beta0}
\beta_{0}:=\frac{n\mu\hat{\kappa}\gamma_{1}}{2n\gamma_{2}(n+\mu\gamma_{2})+\hat{\kappa}\gamma_{2}(2n+\mu\gamma_{1})}.
\end{flalign}
From the content of the previous display and H\"older inequality we finally get
\begin{flalign*}
\left(\mint_{B_{\varsigma}}\snr{D\omega-(D\omega)_{\varsigma}}^{\gamma_{1}} \ dx\right)^{\frac{1}{\gamma_{1}}}\le \left(\mint_{B_{\varsigma}}\snr{D\omega-(D\omega)_{\varsigma}}^{p_{2}(\rr)} \ dx \right)^{\frac{1}{p_{2}(\rr)}}\le c\varsigma^{\beta_{0}},
\end{flalign*}
thus
\begin{flalign*}
\mint_{B_{\varsigma}}\snr{D\omega-(D\omega)_{\varsigma}}^{\gamma_{1}} \ dx \le c\varsigma^{\gamma_{1}\beta_{0}},
\end{flalign*}
with $c=c(\texttt{data},\nr{(\snr{Du})^{p(\cdot)}}_{L^{1}(\tilde{\Omega})},[u]_{0,\tilde{\beta},\tilde{\Omega}})$, so, after covering, we can conclude that $D\omega\in C^{0,\beta_{0}}_{\mathrm{loc}}(\tilde{\Omega},\mathbb{R}^{m\times n})$ because of Morrey's growth theorem. By \eqref{beta0}, it is evident that $\beta_{0}=\beta_{0}(\texttt{data})$ does not depend on $\tilde{\Omega}$, thus $\eqref{f}_{2,3}$ and a standard covening argument render that $Du \in C^{0,\beta_{0}}_{\mathrm{loc}}(\Omega_{0},\mathbb{R}^{N\times n})$.\\\\
\emph{Step 7: the case $p(\cdot)>n-\delta_{0}/2$}. As mentioned in \emph{Step 1}, $u\in C^{0,\beta'}(\Omega^{+},\m)$, with $\beta':=\frac{\delta_{0}}{4n+\delta_{0}}$, so we no longer need to impose a smallness condition like \eqref{small}. Being $p(\cdot)$ continuous, $\Omega^{+}$ is open, so we can fix a ball $B_{\tilde{R}_{*}}\equiv B_{\tilde{R}_{*}}(x_{0})\Subset \Omega^{+}$ with $\tilde{R}_{*}$ satisfying \eqref{18}. Let $\sigma_{0}$ be as in \eqref{sigma0}, so \eqref{kom} is matched on all balls $B_{4\rr}\subset B_{R_{*}}\subset B_{\tilde{R}_{*}}$, where the size of $R_{*}\le \tilde{R}_{*}/2$ will be specified later on. As we did in \emph{Step 6}, we fix open subsets $\tilde{\Omega}\Subset\Omega'\Subset \Omega^{+}$ and cover $\tilde{\Omega}$ with a finite number of balls contained inside $\Omega'$ whose size and number will now depend on $\m$, on $[u]_{0,\beta';\tilde{\Omega}}$ and on $\diam(\Omega')$, having images contained in small coordinate neighborhoods of $\m$. Again we can find $\omega\in W^{1,p(\cdot)}(\tilde{\Omega},\mathbb{R}^{m})\cap C^{0,\beta'}(\tilde{\Omega},\mathbb{R}^{m})$, unconstrained local minimizer of the variational integral \eqref{uvp} with integrand $H(\cdot)$ matching \eqref{assh}, such that $\snr{\omega}\le 1$, $u=f(\omega)$ where $f$ is as in \eqref{f}. Our goal is to show the validity of a Morrey decay estimate like $\eqref{morom}_{3}$. To do so, fix $B_{4\rr}\Subset B_{R_{*}}$ and let $\vartheta \in W^{1,p_{2}(\rr)}(B_{\rr/4},\mathbb{R}^{m})$ be a solution to the frozen Dirichlet problem \eqref{fudp}. Notice that the estimates obtained in \emph{Step 6} till \eqref{gr9} do not require any specific value of $\beta$, therefore, by \eqref{gr9} with $\tilde{\beta}$ replaced by $\beta'$ we immediately have
\begin{flalign}\label{gr12}
\int_{B_{\rr/4}}\snr{D\omega-D\vartheta}^{p_{2}(\rr)}\ dx \le c\rr^{\kappa'}\int_{B_{\rr}}(1+\snr{D\omega}^{2})^{p_{2}(\rr)} \ dx,
\end{flalign}
with $c=c(\texttt{data},\nr{(\snr{Du}^{p(\cdot)})}_{L^{1}(\tilde{\Omega})},[u]_{0,\beta';\tilde{\Omega}})$ and $\kappa':=\frac{1}{2}\min\left\{\frac{\beta'\gamma_{1}\sigma}{1+\sigma},\bar{\kappa}_{1},\beta'\right\}$. Now fix $\tau \in \left(0,\frac{1}{8}\right)$ and recall that, being $\vartheta$ a solution of \eqref{fudp}, inequality $\eqref{140}_{1}$ holds for all $B_{\varsigma_{1}}\subset B_{\varsigma_{2}}\subset B_{\rr/4}$. Adopting the same terminology appearing in \emph{Step 4}, clearly with $\omega$ instead of $u$, we readily have
\begin{flalign}\label{s76}
\psi(\tau \rr)\stackleq{122} &\phi(\tau \rr, \rr)\le c\left\{(\tau\rr)^{p_{2}(\rr)-n}\left[(\tau\rr)^{n}+\int_{B_{\tau \rr}}\snr{D\omega-D\vartheta}^{p_{2}(\rr)} \ dx+\int_{B_{\tau \rr}}\snr{D\vartheta}^{p_{2}(\rr)} \ dx\right]\right\}^{\frac{1}{p_{2}(\rr)}}\nonumber \\
\stackleq{gr12} &c\left\{\tau^{p_{2}(\rr)-n}\left[\tau^{n}+\rr^{\kappa'}\right]\rr^{p_{2}(\rr)-n}\int_{B_{\rr}}(1+\snr{D\omega}^{2})^{p_{2}(\rr)} \ dx\right\}^{\frac{1}{p_{2}(\rr)}}\nonumber \\
\le &\tau^{\beta}\left[c\tau^{1-\beta}+c\rr^{\frac{\kappa'}{\gamma_{2}}}\tau^{-\beta-\frac{\delta_{0}}{2n-\delta_{0}}}\right]\psi(\rr),
\end{flalign}
where we also used $p(\cdot)>n-\frac{\delta_{0}}{2}$. Here $\beta\in (0,1)$ is arbitrary and $c=c(\texttt{data},\nr{(\snr{Du})^{p(\cdot)}}_{L^{1}(\tilde{\Omega})},[u]_{0,\beta';\tilde{\Omega}},\beta)$. Choosing in \eqref{s76} $\tau\le (2c)^{-1/(1-\beta)}$ and $R_{*}\le c^{-\frac{\gamma_{2}}{\kappa'}} 2^{-\frac{\gamma_{2}}{\kappa'}}\tau^{\frac{2n\gamma_{2}}{(2n-\delta_{0})\kappa'}}$ we end up with $\psi(\tau\rr)\le \tau^{\beta}\psi(\rr)$, by remembering also that $\rr\le R_{*}$. Notice that our previous decisions fixed the following dependencies: $\tau=\tau(\texttt{data},\nr{(\snr{Du})^{p(\cdot)}}_{L^{1}(\tilde{\Omega})},[u]_{0,\beta';\tilde{\Omega}},\beta)$ and $R_{*}=R_{*}(\texttt{data},\nr{(\snr{Du})^{p(\cdot)}}_{L^{1}(\tilde{\Omega})},[u]_{0,\beta';\tilde{\Omega}},\beta)$. By induction, it is easy to see that for any integer $j$ there holds
\begin{flalign}\label{s77}
\psi(\tau^{j}\rr)\le \tau^{j\beta}\psi(\rr).
\end{flalign}
Now, if $0<\varsigma<\rr/8$, there exists an integer $j\ge 1$ such that $\tau^{j+1}\rr<\varsigma\le \tau^{j}\rr$. Therefore, proceeding as we did for \eqref{46}, using \eqref{s77} we get
\begin{flalign}\label{s78}
\psi(\varsigma)\le \tau^{1-\frac{n}{\gamma_{1}}}\tau^{j\beta}\psi(\rr)\le c(\varsigma/\rr)^{\beta}\psi(\rr),
\end{flalign}
with $c=c(\texttt{data},\nr{(\snr{Du})^{p(\cdot)}}_{L^{1}(\tilde{\Omega})},[u]_{0,\beta';\tilde{\Omega}},\beta)$. This is the estimate we were looking for. In fact, as in \emph{Step 4}, \eqref{n2} we can extend \eqref{s78} to the full range $0<\varsigma<\rr$ and, proceeding as in estimates \eqref{n0}-\eqref{n1} we can get rid of the restriction $s\le R_{*}$; as already mentioned, we shall only retain $s\le \tilde{R}_{*}/2$. Furthermore, it directly implies that
\begin{flalign*}
\varsigma^{\gamma_{1}-n-\beta\gamma_{1}}\int_{B_{\varsigma}}\snr{D\omega}^{\gamma_{1}} \ dx\le& c\tilde{R}_{*}^{\gamma_{1}(1-\beta)}\left(\mint_{B_{\tilde{R}_{*}/2}}(1+\snr{D\omega}^{2})^{p_{2}(\tilde{R}_{*}/2)/2} \ dx\right)^{\frac{\gamma_{1}}{p_{2}(\tilde{R}_{*}/2)}}\nonumber \\
\le &c\tilde{R}_{*}^{\gamma_{1}(1-\beta)}\left(\mint_{B_{\tilde{R}_{*}/2}}(1+\snr{D\omega}^{2})^{(1+\sigma_{0})p(x)} \ dx\right)^{\frac{\gamma_{1}}{(1+\sigma_{0})p_{1}(\tilde{R}_{*}/2)}}\nonumber \\
\le& c\tilde{R}_{*}^{\gamma_{1}(1-\beta)}\left(\mint_{B_{\tilde{R}_{*}}}(1+\snr{D\omega}^{2})^{p(x)/2} \ dx\right)^{\frac{\gamma_{1}}{p_{1}(\tilde{R}_{*}/2)}}\le c,
\end{flalign*}
for $c=c(\texttt{data},\nr{(\snr{Du})^{p(\cdot)}}_{L^{1}(\tilde{\Omega})},[u]_{0,\beta';\tilde{\Omega}},\beta)$ and therefore, being $\beta \in (0,1)$ arbitrary, by Morrey's growth theorem and a standard covering argument, we can conclude that $\omega \in C^{0,\beta}_{\mathrm{loc}}(\Omega^{+},\mathbb{R}^{m})$ for any $\beta \in (0,1)$. Now, for all $B_{4\varsigma}\Subset \Omega^{+}$ such that $0<\varsigma\le \tilde{R}_{*}/2$, by Lemmas \ref{inngeh}, \ref{cacc} and \ref{L0} (\textit{ii}.), we obtain
\begin{flalign}\label{s79}
\mint_{B_{\varsigma}}\snr{D\omega}^{p_{2}(\varsigma)} \ dx \le &c\left(\mint_{B_{2\varsigma}}(1+\snr{D\omega}^{2})^{p(x)/2} \ dx\right)^{\frac{p_{2}(\varsigma)-p_{1}(\varsigma)}{p_{1}(\varsigma)}}\mint_{B_{2\varsigma}}(1+\snr{D\omega}^{2})^{p(x)/2} \ dx\nonumber \\
\le &c\mint_{B_{4\varsigma}}1+\left |\frac{\omega-(\omega)_{4\varsigma}}{\varsigma}\right |^{p(x)} \ dx\le c\varsigma^{-\gamma_{2}(1-\beta)},
\end{flalign}
where $c=c(\texttt{data},\nr{(\snr{Du})^{p(\cdot)}}_{L^{1}(\tilde{\Omega})},[u]_{0,\beta';\tilde{\Omega}},\beta)$. Once \eqref{s79} is available, we can conclude as in \emph{Step 6}.
\section{Dimension reduction}
In this section we obtain a further reduction of the dimension of the singular set of $p(x)$-harmonic maps, for $p(\cdot)\ge 2$ Lipschitz continuous, thus improving, at least in this case, the result given in Theorem \ref{T0}, \emph{Step 5}. 
\subsection{Compactness of minimizers and Monotonicity formula}
The proof of Theorem \ref{T1} essentially needs two components to be carried out. The first is the compactness of sequences of minimizers of \eqref{cvp} under uniform assumptions, while the second is the monotonicity along solutions to \eqref{cvp} of a certain quantity strictly related to the $p(x)$-energy. Those arguments are quite classical, see e. g. \cite{giamar,harkinlin,tac}.
\begin{lemma}[Compactness]\label{com}
Let $(k_{j})_{j \in \N}, (p_{j})_{j \in \N}$ be two sequences of $\alpha$-H\"older continuous functions, $\alpha \in (0,1]$, satisfying
\begin{flalign}\label{com1}
\begin{cases}
\ \sup_{j \in \N}[k_{j}]_{0,\alpha}<c_{k}\\
\ \lambda\le k_{j}(x)\le \Lambda \ \mathrm{for \ all \ }x \in B_{1}\\
\ \nr{k_{j}-k}_{L^{\infty}(B_{1})}\to 0, \ k(\cdot) \in C^{0,\alpha}(B_{1})
\end{cases}\quad \mbox{and}\quad \begin{cases}
\ \sup_{j \in \N}[p]_{0,\alpha}<c_{p}\\
\ p_{j}(x)\ge \gamma_{1}>1 \ \mathrm{for \ all \ }x \in B_{1}, j \in \N\\
\ \nr{p_{j}-p_{0}}_{L^{\infty}(B_{1})}\to 0,\ p_{0}\ge \gamma_{1}>1 \ \mathrm{constant},
\end{cases}
\end{flalign}
respectively. For each $j \in \N$, let $u_{j}\in W^{1,p_{j}(\cdot)}(B_{1},\m)$ be a constrained local minimizer of
\begin{flalign*}
\mathcal{E}_{j}(w,B_{1}):=\int_{B_{1}}k_{j}(x)\snr{Dw}^{p_{j}(x)} \ dx,
\end{flalign*}
where $\m$ is as in $(\mathrm{\m 1})$-$(\mathrm{\m 2})$. Then, there exists a subsequence, still denoted by $(u_{j})_{j \in  \N}$, such that 
\begin{flalign}\label{com3}
u_{j}\rightharpoonup v \ \ \mathrm{weakly \ in \ }W^{1,(1+\tilde{\sigma})p_{0}}(B_{r},\m)
\end{flalign}
for some $\tilde{\sigma}>0$ and any $r \in (0,1)$ and $v$ is a constrained local minimizer of the functional
\begin{flalign*}
\mathcal{E}_{0}(w,B_{1}):=\int_{B_{1}}k(x)\snr{Dw}^{p_{0}} \ dx.
\end{flalign*}
Moreover, $\mathcal{E}_{j}(u_{j},B_{r})\to \mathcal{E}_{0}(v,B_{r})$ for all $r\in (0,1).$ Finally, if $x_{j}$ is a singular point of $u_{j}$ and $x_{j}\to \bar{x}$, then $\bar{x}$ is a singular point for $v$.
\end{lemma}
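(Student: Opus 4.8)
The plan is to run a $\Gamma$-compactness argument, the genuine difficulties being the simultaneous presence of the varying exponents $p_{j}(\cdot)$, the varying coefficients $k_{j}(\cdot)$ and the geometric constraint $w(x)\in\m$. I would first upgrade \eqref{com1} into \emph{uniform} estimates: since every parameter in \eqref{com1} is controlled, the higher integrability threshold of Lemma \ref{inngeh} can be chosen independently of $j$, and, together with $\nr{p_{j}-p_{0}}_{L^{\infty}(B_{1})}\to0$ and the (harmless, $\m$ being compact) $L^{\infty}$ bound, this produces a single $\tilde\sigma>0$ and bounds $\sup_{j}\nr{Du_{j}}_{L^{(1+\tilde\sigma)p_{0}}(B_{r})}\le c(r)$ for every $r<1$ --- provided, as is always the case in the blow-up applications, one also knows $\sup_{j}\int_{B_{1}}\snr{Du_{j}}^{p_{j}(x)}\,dx<\infty$. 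Reflexivity, Rellich's theorem and a diagonal argument then yield a non relabelled subsequence with $u_{j}\rightharpoonup v$ in $W^{1,(1+\tilde\sigma)p_{0}}(B_{r},\RN)$, $u_{j}\to v$ in $L^{q}(B_{r})$ and a.e., and $v\in W^{1,(1+\tilde\sigma)p_{0}}(B_{r},\m)$ since $\m$ is closed; this is \eqref{com3}.

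Next I would prove the two-sided matching of energies. For the lower bound, writing $\mathcal E_{j}(u_{j},U)=\int_{U}k\snr{Du_{j}}^{p_{0}}\,dx+\mathrm R_{j}$ with $U\Subset B_{1}$, one sees $\mathrm R_{j}\to0$: the coefficient discrepancy is killed by $\nr{k_{j}-k}_{L^{\infty}}\to0$, and $\int_{U}\big|\snr{Du_{j}}^{p_{j}(x)}-\snr{Du_{j}}^{p_{0}}\big|\,dx\to0$ follows from Lemma \ref{L0}\,\emph{(i.)} (with $\varepsilon_{0}$ small enough that $(1+\varepsilon_{0})\max\{p_{j},p_{0}\}\le(1+\tilde\sigma)p_{0}$ for large $j$) and the uniform bound of the first step; weak lower semicontinuity of the convex functional $w\mapsto\int_{U}k\snr{Dw}^{p_{0}}\,dx$ then gives $\mathcal E_{0}(v,U)\le\liminf_{j}\mathcal E_{j}(u_{j},U)$. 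For the upper bound I would fix a ball $B_{\rho}\Subset B_{1}$ and a competitor $w\in W^{1,p_{0}}_{v}(B_{\rho},\m)$, which, by density of Lipschitz maps (cf.\ the remark after Definition \ref{N12}), may be taken Lipschitz; with $\eta\in C^{1}_{c}(B_{\rho})$, $\chi_{B_{(1-\theta)\rho}}\le\eta$, $\snr{D\eta}\le c(\theta\rho)^{-1}$, I set $z_{j}:=\eta w+(1-\eta)u_{j}$ and apply Lemma \ref{ext} (with exponent $p_{j}(\cdot)$) to $z_{j}$ on the annulus $A_{\theta}:=B_{\rho}\setminus\overline{B_{(1-\theta)\rho}}$, whose boundary trace of $z_{j}$ lies in $\m$, then glue the output with $w$ on $B_{(1-\theta)\rho}$. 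This yields an admissible $w_{j}\in W^{1,p_{j}(\cdot)}_{u_{j}}(B_{\rho},\m)$ with
\[
\mathcal E_{j}(u_{j},B_{\rho})\le\mathcal E_{j}(w_{j},B_{\rho})\le\int_{B_{\rho}}k_{j}\snr{Dw}^{p_{j}(x)}\,dx+c\int_{A_{\theta}}k_{j}\snr{Dz_{j}}^{p_{j}(x)}\,dx ,
\]
and, using $\snr{Dz_{j}}\lesssim\snr{Dw}+\snr{Du_{j}}+(\theta\rho)^{-1}\snr{u_{j}-w}$, $u_{j}-w\to v-w\in W^{1,p_{0}}_{0}(B_{\rho},\RN)$ in $L^{p_{0}}$, Hardy's inequality and the equi-integrability coming from the first step, the annular term is $\le E(\theta)$ with $E(\theta)\to0$ after letting $j\to\infty$; hence $\limsup_{j}\mathcal E_{j}(u_{j},B_{\rho})\le\mathcal E_{0}(w,B_{\rho})$. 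Combined with the lower bound this shows $v$ is a constrained local minimizer of $\mathcal E_{0}$, and choosing $w=v$ gives $\mathcal E_{j}(u_{j},B_{r})\to\mathcal E_{0}(v,B_{r})$ for all $r\in(0,1)$.

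Finally, for the persistence of singular points I would first upgrade the convergence to a strong one: from $\int_{B_{r}}k\snr{Du_{j}}^{p_{0}}\to\int_{B_{r}}k\snr{Dv}^{p_{0}}$, $u_{j}\rightharpoonup v$ in $W^{1,p_{0}}$ and the uniform convexity of $L^{p_{0}}$ (Clarkson, $\lambda\le k\le\Lambda$) one gets $Du_{j}\to Dv$ in $L^{p_{0}}_{\mathrm{loc}}(B_{1})$, and then, by Lemma \ref{L0}\,\emph{(i.)} and $|(1+a^{2})^{1/2}-(1+b^{2})^{1/2}|\le|a-b|$, $\int_{B_{s}(y)}(1+\snr{Du_{j}}^{2})^{\bar p_{j}/2}\to\int_{B_{s}(y)}(1+\snr{Dv}^{2})^{p_{0}/2}$ for $B_{s}(y)\Subset B_{1}$, where $\bar p_{j}:=\sup_{B_{s}(y)}p_{j}\to p_{0}$. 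Arguing by contradiction, if $\bar x$ were a regular point of $v$, then --- $v$ minimizing $\mathcal E_{0}$ with the \emph{constant} exponent $p_{0}$ --- the smallness condition \eqref{small} underlying Theorem \ref{T0} (read with $p(\cdot)\equiv p_{0}$, whose $\varepsilon$-threshold $\varepsilon_{*}$ is uniform by \eqref{com1}) would hold on some $B_{s_{0}}(\bar x)\Subset B_{1}$ with a margin; since $x_{j}\to\bar x$ and $\bar p_{j}\to p_{0}$, the displayed convergence would force \eqref{small} to hold for $u_{j}$ on $B_{s_{0}/2}(x_{j})$ for large $j$, so $x_{j}$ would be regular for $u_{j}$ by Theorem \ref{T0}, contradicting $x_{j}\in\Sigma_{0}(u_{j})$. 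Hence $\bar x\in\Sigma_{0}(v)$.

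The hard part will be the competitor construction: honouring the constraint $w(x)\in\m$ in the recovery sequence forces one to combine the finite energy extension on \emph{thin} annuli (Lemma \ref{ext}, whose multiplicative constant is harmless only because on the inner ball one keeps the competitor $w$ itself) with the delicate interplay of the two limits $j\to\infty$ and $\theta\to0$, while the exponent is simultaneously allowed to vary; here the uniform higher integrability of the first step is exactly what makes the annular error terms equi-integrable and hence negligible. Everything else is then routine.
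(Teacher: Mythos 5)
Your proposal follows the same overall route as the paper: uniform higher integrability and Rellich give the weak subsequential limit; a two-sided $\Gamma$-argument with the finite-energy extension (Lemma~\ref{ext}) on a thin annulus as the recovery step shows $v$ is an $\mathcal E_{0}$-minimizer and that the energies converge; and persistence of singular points is obtained by transferring the $\varepsilon$-regularity threshold. There is, however, one genuine gap in your \emph{Step~1}: you slip in the extra hypothesis $\sup_{j}\int_{B_{1}}|Du_{j}|^{p_{j}(x)}\,dx<\infty$ with the remark that it is ``always the case in the blow-up applications,'' but the lemma as stated makes no such assumption, and the proof must not either. The paper derives the needed \emph{local} uniform bound from Corollary~\ref{C0} alone: since $\m$ is compact, $\sup_{j}\|u_{j}\|_{L^{\infty}(B_{1})}\le c(\m)$, and the Caccioppoli inequality (Lemma~\ref{cacc}) then turns this $L^{\infty}$ bound into $\int_{B_{\varrho/4}(x_{0})}|Du_{j}|^{(1+\delta)p_{j}(x)}\,dx\le c(\varrho,\ldots)$ for every $B_{\varrho}(x_{0})\subset B_{1}$, with constants controlled uniformly by the $j$-uniform parameters in \eqref{com1}; a covering then yields $\sup_{j}\int_{B_{r}}|Du_{j}|^{(1+\delta/2)q_{2}}\,dx\le c(r)$ for every $r<1$. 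Once this is recognized, your appeal to ``equi-integrability coming from the first step'' also becomes self-contained rather than conditional.

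Two smaller points of comparison. In the upper bound, you glue a Lipschitz competitor $w$ (available by density) with $u_{j}$ across the annulus; the paper instead takes the competitor to be the $\mathcal E_{0}$-minimizer $\tilde v$ in the Dirichlet class $W^{1,p_{0}}_{v}(B_{r},\m)$, and then needs Lemma~\ref{bougeh} to guarantee $\tilde v\in W^{1,(1+\delta/2)p_{0}}(B_{r},\m)$ so that $\tilde v$ lies in $W^{1,(1+\delta/4)p_{j}(\cdot)}$ for large $j$; your Lipschitz-competitor route avoids invoking Lemma~\ref{bougeh} but must verify density in the Dirichlet class (trace constraint), which is a quick but non-trivial remark. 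For the annular error, the paper uses the Poincar\'e inequality \eqref{14} on the annulus rather than Hardy; the two are interchangeable here. Finally, for \emph{Step~3} the paper simply defers to \cite[Lemma~3.1]{tac}; your argument --- energy convergence $\Rightarrow$ norm convergence $\Rightarrow$ strong $L^{p_{0}}_{\mathrm{loc}}$ convergence of $Du_{j}$ by Radon--Riesz in the weighted (uniformly convex) $L^{p_{0}}$, and then transfer of the $\varepsilon$-smallness \eqref{small} using the $j$-uniform threshold $\varepsilon_{*}=\varepsilon_{*}(\texttt{data},\beta)$ and radius $\tilde R_{*}$ --- is essentially what Tachikawa's lemma does and is a valid, more explicit substitute; you should just note that the passage from $\mathcal E_{j}(u_{j},B_{r})\to\mathcal E_{0}(v,B_{r})$ to $\int_{B_{r}}k|Du_{j}|^{p_{0}}\to\int_{B_{r}}k|Dv|^{p_{0}}$ uses the same uniform higher-integrability estimate as in the liminf inequality.
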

\begin{proof}
The proof is divided into three steps.\\\\
\emph{Step 1: weak $W^{(1+\tilde{\sigma})p_{0}}$-convergence.} Since $\m$ is compact, $\sup_{j\in \N}\nr{u_{j}}_{L^{\infty}(B_{1})}\le c(\m)$,
and given that $\gamma_{1}>1$, we obtain, up to (non relabelled) subsequences,
\begin{flalign}\label{139}
u_{j}\rightharpoonup v \ \mbox{weakly in }L^{\gamma_{1}}(B_{1},\mathbb{R}^{N}).
\end{flalign}
Moreover, being the bounds in \eqref{com1} uniform in $j \in \N$, Lemma \ref{inngeh} and Corollary \ref{C0} (and Lemma \ref{bougeh} for the associated frozen problems) hold for all the $\mathcal{E}_{j}$'s with parameters independent of $j$. By Lemma \ref{inngeh}, we know that $(u_{j})_{j \in \N}\subset W^{1,(1+\delta)p(\cdot)}(B_{1},\m)$ for all $\delta \in (0,\tilde{\delta}_{0})$. Let $\delta_{2}:=\frac{1}{4}\min\{\tilde{\sigma}_{0},\tilde{\delta}_{0}\}$, where $\tilde{\sigma}_{0}$ is the higher integrability threshold given by Lemma \ref{bougeh} and pick any $\delta \in (0,\delta_{2})$. Because of the uniform convergence of the $p_{j}$'s to the constant $p_{0}$, taking $j$ sufficiently large we can find positive constants $\gamma_{1}\le q_{1}\le q_{2}\le \gamma_{2}$ such that
\begin{flalign}\label{72}
1<q_{1}\le p_{j}(\cdot)\le q_{2}<\infty \ \mbox{on} \ B_{1}, \quad q_{2}\left(1+\frac{\delta}{2}\right)\le q_{1}(1+\delta), \quad q_{2}\le p_{0}\left(1+\frac{\delta}{2}\right).
\end{flalign}
For any $B_{\rr}(x_{0})\equiv B_{\rr}\subset B_{1}$, Corollary \ref{C0} yields that
\begin{flalign*}
\int_{B_{\rr/4}}\snr{Du_{j}}^{(1+\delta)p_{j}(x)} \ dx\le c
\end{flalign*}
for all $j \in \N$, with $c=c(\rr,c_{p},n,N,\m,\lambda,\Lambda,\gamma_{1},\gamma_{2},\alpha)$. This last estimate and $\eqref{72}_{1,2}$ imply that
\begin{flalign}\label{74}
\int_{B_{\rr/4}}\snr{Du_{j}}^{(1+\delta/2)q_{2}} \ dx \le c,
\end{flalign}
for $c=c(\rr,c_{p},n,N,\m,\lambda,\Lambda,\gamma_{1},\gamma_{2},\alpha)$. Now, for any fixed $r\in (0,1)$, we can cover $B_{r}\equiv B_{r}(0)$ by a finite number of balls $B_{(1-\rr)/4}(x_{0})$ with $x_{0} \in B_{r}$, use \eqref{74} on each ball and then sum them all to get
\begin{flalign}\label{75}
\int_{B_{r}}\snr{Du_{j}}^{(1+\delta/2)q_{2}} \ dx \le c
\end{flalign}
for large $j \in \N$. Here $c=c(r,c_{p},n,N,\m,\lambda,\Lambda,\gamma_{1},\gamma_{2},\alpha)$. From the compactness of $\m$ and \eqref{75}, we derive the uniform boundedness of the $u_{j}$'s in $W^{1,(1+\delta/2)q_{2}}(B_{r},\m)$, so, up to extract a (non relabelled) subsequence, we obtain that $u_{j}\rightharpoonup \bar{v}$ weakly in $W^{1,(1+\delta/2)q_{2}}(B_{r},\m)$, for some $\bar{v}\in W^{1,(1+\delta/2)q_{2}}(B_{r},\m)$. Anyway, by \eqref{139}, $\bar{v}(x)=v(x)$, $v(x)\in \m$ for a.e. $x\in B_{r}$ and, by Rellich's theorem,
\begin{flalign}
u_{j}\to v \ \ &\mbox{strongly in} \ \ L^{(1+\delta/2)q_{2}}(B_{r},\m),\label{77}\\
Du_{j}\to Dv \ \ &\mbox{weakly in}\ \ L^{(1+\delta/2)q_{2}}(B_{r},\mathbb{R}^{N\times n}).\label{79}
\end{flalign}
From $\eqref{72}_{1}$ and $\eqref{com1}_{2}$ we see that $q_{2}\ge p_{0}$, so \eqref{com3} is proved with $\tilde{\sigma}\equiv\delta/2$. In particular, the weak lower semicontinuity of the norm renders that
\begin{flalign}\label{78}
\int_{B_{r}}\snr{Dv}^{(1+\delta/2)q_{2}} \ dx \le c,
\end{flalign}
with $c=c(r,c_{p},n,N,\m,\lambda,\Lambda,\gamma_{1},\gamma_{2},\alpha)$.\\\\
\emph{Step 2: compactness.} We aim to show that $v$ is an $\m$-constrained local minimizer of $\mathcal{E}_{0}$. To do so, we first claim that
\begin{flalign}\label{66}
\mathcal{E}_{0}(v,B_{r})\le \liminf_{j\to \infty}\mathcal{E}_{j}(u_{j},B_{r}),
\end{flalign}
for all $r \in (0,1)$. Let us rewrite $\mathcal{E}_{j}(u_{j},B_{r})=\left(\mathcal{E}_{j}(u_{j},B_{r})-\mathcal{E}_{0}(u_{j},B_{r})\right)+\mathcal{E}_{0}(u_{j},B_{r})$. From \eqref{com3} and weak lower semicontinuity we have
\begin{flalign}\label{67}
\mathcal{E}_{0}(v,B_{r})\le \liminf_{j\to \infty}\mathcal{E}_{0}(u_{j},B_{r}).
\end{flalign}
On the other hand, from $\eqref{72}_{1},$ Lemma \ref{L0} (\textit{i.}) with $\varepsilon_{0}\equiv\delta/2$ and \eqref{com1} we have
\begin{flalign}\label{68}
\left | \ \mathcal{E}_{j}(u_{j},B_{r})-\mathcal{E}_{0}(u_{j},B_{r}) \ \right |\le &c\nr{p_{j}-p_{0}}_{L^{\infty}(B_{1})}\int_{B_{r}}1+\snr{Du_{j}}^{(1+\delta/2)q_{2}} \ dx\nonumber\\
&+\nr{k_{j}-k}_{L^{\infty}(B_{1})}\int_{B_{r}}\snr{Du_{j}}^{p_{0}} \ dx\nonumber \\
\stackleq{75}&c\left(\nr{p_{j}-p_{0}}_{L^{\infty}(B_{1})}+\nr{k_{j}-k}_{L^{\infty}(B_{1})}\right)\to0,
\end{flalign}
where $c=c(r,c_{p},n,N,\m,\lambda,\Lambda,\gamma_{1},\gamma_{2},\alpha)$. Combining \eqref{68} and \eqref{67} we obtain \eqref{66}.\\
Let $\tilde{v}\in W^{1,p_{0}}(B_{r},\m)$ be a solution to the Dirichlet problem
\begin{flalign*}
W^{1,p_{0}}_{v}(B_{r},\m)\ni w\mapsto \min \mathcal{E}_{0}(w,B_{r}),
\end{flalign*}
and extend it to be equal to $v$ outside $B_{r}$. In this way, $\tilde{v}\in W^{1,p_{0}}_{v}(B_{1},\m)$. Since we are assuming that $(p_{j})_{j\in \N}$ converges uniformly to $p_{0}$ on $B_{1}$, we can take $j \in \N$ so large that
\begin{flalign}\label{69}
\nr{p_{j}}_{L^{\infty}(B_{1})}\left(1+\frac{\delta}{4}\right)\le p_{0}\left(1+\frac{\delta}{2}\right)
\end{flalign}
holds. Moreover, by \eqref{78} and $\eqref{72}_{1}$ we have that $v \in W^{1,(1+\delta/2)q_{2}}(B_{r},\m)\subset W^{1,(1+\delta/2)p_{0}}(B_{r},\m)$, so, from Lemma \ref{bougeh} with $p\equiv p_{0}$ we obtain that $\tilde{v}\in W^{1,(1+\delta/2)p_{0}}(B_{r},\m)\subset W^{1,(1+\delta/4)p_{j}(\cdot)}(B_{r},\m)\cap W^{1,q_{2}}(B_{r},\m)$, where the last inclusion is due to \eqref{69} and $\eqref{72}_{3}$. From \eqref{69}, Lemma \ref{bougeh} and \eqref{78} we get
\begin{flalign}\label{80}
\int_{B_{r}}\snr{D\tilde{v}}^{(1+\delta/4)p_{j}(x)} \ dx\le \int_{B_{r}}1+\snr{D\tilde{v}}^{(1+\delta/2)p_{0}} \ dx\le c\int_{B_{r}}1+\snr{Dv}^{(1+\delta/2)p_{0}} \ dx\le c,
\end{flalign}
where $c=c(r,c_{p},n,N,\m,\lambda,\Lambda,\gamma_{1},\gamma_{2},\alpha)$. Let $\theta \in (0,1)$ be a small parameter to be fixed and $\eta$ a cut-off function with the following specifics
\begin{flalign}\label{cutoff}
\eta \in C^{1}_{c}(B_{r}), \quad \chi_{B_{r(1-\theta)}}\le \eta\le \chi_{B_{r}},\quad \snr{D\eta}\le (r\theta)^{-1}\ \ \mbox{on}\ \ A_{r\theta}.
\end{flalign}
In correspondence of such a choice of $\eta$, we define the comparison map $w_{j}:=(1-\eta)u_{j}+\eta\tilde{v}$ and notice that $\left. w_{j} \right |_{\partial B_{r(1-\theta)}}=\left. \tilde{v} \right |_{\partial B_{r(1-\theta)}}$ and $\left. w_{j} \right |_{\partial B_{r}}=\left. u_{j} \right |_{\partial B_{r}}$. So Lemma \ref{ext} applies to $w_{j}$ on $A_{r\theta}$ thus rendering a map $w_{j}'\in W^{1,p_{j}(\cdot)}(A_{r\theta},\m)$ such that
\begin{flalign}\label{comex}
\left. w_{j}' \right |_{\partial B_{r(1-\theta)}}=\left. \tilde{v} \right |_{\partial B_{r(1-\theta)}},\quad \left. w_{j}' \right |_{\partial B_{r}}=\left. u_{j} \right |_{\partial B_{r}}, \quad \int_{A_{r\theta}}\snr{Dw'_{j}}^{p_{j}(x)} \ dx\le c\int_{A_{r\theta}}\snr{Dw_{j}}^{p_{j}(x)} \ dx,
\end{flalign}
with $c=c(N,\m,\gamma_{2})$. Finally we define
\begin{flalign}\label{tf}
\tilde{w}_{j}:=\begin{cases} \ \tilde{v} \ \ &\mbox{on} \ B_{r(1-\theta)}\\
\ w'_{j} \ \ &\mbox{on} \ A_{r\theta},
\end{cases}
\end{flalign}
which, by $\eqref{comex}_{2,3}$ and \eqref{80} is an admissible competitor for $u_{j}$ on $B_{r}$. From the minimality of $u_{j}$ and $\eqref{comex}_{2,3}$ we have
\begin{flalign*}
\mathcal{E}_{j}(u_{j},B_{r})\le &\mathcal{E}_{j}(\tilde{w}_{j},B_{r})=\mathcal{E}_{j}(\tilde{v},B_{r(1-\theta)})+\mathcal{E}_{j}(w'_{j},A_{r\theta})\\
\le &\int_{B_{r}}k_{j}(x)\snr{D\tilde{v}}^{p_{j}(x)} \ dx +c\int_{A_{r\theta}}k_{j}(x)\snr{Dw_{j}}^{p_{j}(x)} \ dx:=\mbox{(I)}_{j}+\mbox{(II)}_{j},
\end{flalign*}
for $c=c(N,\m,\lambda,\Lambda,\gamma_{2})$. By \eqref{com1}, \eqref{69}, $\eqref{72}_{1}$, \eqref{78} and Lemma \ref{L0} (\textit{i.}) with $\varepsilon_{0}\equiv\delta/4$ we see that
\begin{flalign*}
\left |\ \int_{B_{r}}k_{j}(x)\snr{D\tilde{v}}^{p_{j}(x)} \ dx\right.&\left .-\int_{B_{r}}k(x)\snr{D\tilde{v}}^{p_{0}} \ dx \  \right |\\
\stackleq{80} &c\left(\nr{k_{j}-k}_{L^{\infty}(B_{1})}+\nr{p_{j}-p_{0}}_{L^{\infty}(B_{1})}\right)\int_{B_{r}}1+\snr{Dv}^{(1+\delta/2)p_{0}} \ dx\to 0,
\end{flalign*}
where $c=c(r,c_{p},n,N,\m,\lambda,\Lambda,\gamma_{1},\gamma_{2},\alpha)$. So we get that
\begin{flalign}\label{82}
\mbox{(I)}_{j}\to\mathcal{E}_{0}(\tilde{v},B_{r}).
\end{flalign}
Exploiting the very definition of the $w_{j}$'s and $\eqref{cutoff}_{3}$ we have
\begin{flalign*}
\mbox{(II)}_{j}\le &c\int_{A_{r\theta}}k_{j}(x)\left[\snr{Du_{j}}^{p_{j}(x)}+\snr{D\tilde{v}}^{p_{j}(x)}+\left |\frac{u_{j}-\tilde{v}}{r\theta} \right |^{p_{j}(x)}\right] \ dx\\
\le &c\int_{A_{r\theta}}k_{j}(x)\left[\snr{Du_{j}}^{p_{j}(x)}+\snr{D\tilde{v}}^{p_{j}(x)}+\left |\frac{u_{j}-v}{r\theta} \right |^{p_{j}(x)}+\left |\frac{\tilde{v}-v}{r\theta} \right |^{p_{j}(x)}\right] \ dx=:\mbox{(II)}_{j}^{1}+\mbox{(II)}_{j}^{2}+\mbox{(II)}_{j}^{3},
\end{flalign*}
with $c=c(r,c_{p},n,N,\m,\lambda,\Lambda,\gamma_{1},\gamma_{2},\alpha)$. Using $\eqref{72}_{1}$, \eqref{74}, \eqref{80} and $\eqref{com1}$ we get
\begin{flalign}\label{83}
\mbox{(II)}_{j}^{1}\le &\Lambda \int_{A_{r\theta}}\snr{Du_{j}}^{p_{j}(x)}+\snr{D\tilde{v}}^{p_{j}(x)} \ dx \le c,
\end{flalign}
where $c=c(r,c_{p},n,N,\m,\lambda,\Lambda,\gamma_{1},\gamma_{2},\alpha)$. By $\eqref{72}_{1}$ and \eqref{77}, a well known variation on Lebesgue's dominated convergence theorem allows concluding that
\begin{flalign}\label{84}
\mbox{(II)}_{j}^{2}\to0.
\end{flalign}
Finally, by \eqref{14} with $p(\cdot)\equiv p_{j}(\cdot)$ and \eqref{80} we obtain
\begin{flalign}\label{85}
\mbox{(II)}_{j}^{3}\le c\int_{A_{r\theta}}\snr{Dv-D\tilde{v}}^{p_{j}(x)} \ dx +c\snr{A_{r\theta}}\le c\int_{A_{r\theta}}1+\snr{Dv}^{(1+\delta/2)p_{0}} \ dx+c\snr{A_{r\theta}}\le c,
\end{flalign}
for $c=c(r,c_{p},n,N,\m,\lambda,\Lambda,\gamma_{1},\gamma_{2},\alpha)$. By the absolute continuity of Lebesgue's integral, \eqref{83} and \eqref{85}, given $\sigma>0$ we can always choose $\theta$ sufficiently small in such a way that
\begin{flalign}\label{86}
(\mathrm{II})_{j}^{1}+(\mathrm{II})_{j}^{3}\le \frac{\sigma}{2},
\end{flalign}
and, by \eqref{84}, $j$ large enough such that
\begin{flalign}\label{87}
(\mathrm{II})^{2}_{j}\le \frac{\sigma}{2}.
\end{flalign}
All in all, collecting \eqref{66}, \eqref{82}, \eqref{86} and \eqref{87} we can conclude that
\begin{flalign*}
\mathcal{E}_{0}(v,B_{r})\le \liminf_{j\to \infty}\mathcal{E}_{j}(u_{j},B_{r})\le\limsup_{j\to \infty}\mathcal{E}_{j}(u_{j},B_{r})\le  \limsup_{j\to \infty}\mathcal{E}_{j}(\tilde{v},B_{r})+\sigma=\mathcal{E}_{0}(\tilde{v},B_{r})+\sigma,
\end{flalign*}
so, by the arbitrariety of $\sigma$ and the minimality of $\tilde{v}$, we can conclude that $\mathcal{E}_{0}(\tilde{v},B_{r})=\mathcal{E}_{0}(v,B_{r})$. Being this true for any $r \in (0,1)$, $v$ is an $\m$-constrained local minimizer of $\mathcal{E}_{0}$ and, as a direct consequence of the last chain of inequalities, $\mathcal{E}_{j}(u_{j},B_{r})\to \mathcal{E}_{0}(v,B_{r})$.\\\\
\emph{Step 3: singular points.} Once we have the results contained in \emph{Steps 1}-\emph{2} by hand, the proof of \emph{Step 3} goes as the one in \cite[Lemma 3.1]{tac} and we shall omit it.
\end{proof}
We stress that Lemma \ref{com} holds with $p(\cdot)\ge \gamma_{1}>1$ H\"older continuous rather than Lipschitz. We need stronger assumptions only to prove a suitable monotonicity formula.

\begin{lemma}[Monotonicity formula]\label{mon}
Let $k(\cdot)\in C^{0,\alpha}(\Omega)$, $\alpha \in (0,1]$ be such that $k(0)=1$, $p(\cdot) \in Lip(\Omega)$ and $n> \gamma_{2}\ge p(x)\ge 2$ for all $x \in \Omega$. If $u \in W^{1,p(\cdot)}(\Omega,\m)$ is a constrained local minimizer of \eqref{cvp} on $B_{1}$, then for any $\gamma \in (0,1)$ there exist a positive $c=c(\texttt{data},\gamma)$ and $T\in (0,1)$ such that for all $0<r<R<T$, we have
\begin{flalign*}
\int_{\partial B_{1}}\snr{u(Rx)-u(rx)}^{p_{2}(r)} \ d\mathcal{H}^{n-1}(x)\le cr^{p_{2}(r)-p_{2}(R)}\left(\log \frac{R}{r}\right)^{p_{2}(r)-1}\left((\Phi(R)-\Phi(r))+(R^{\gamma}-r^{\gamma})\right),
\end{flalign*}
where 
\begin{flalign*}
\Phi(t)=t^{p_{2}(t)-n}\exp({At^{\alpha}})\int_{B_{t}}k(x)\snr{Du}^{p_{2}(t)} \ dx,
\end{flalign*}
with $A=A(n,[k]_{0,\alpha},[p]_{0,1},\alpha)>0$.
\end{lemma}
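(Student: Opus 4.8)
The proof follows the classical comparison scheme for energy minimizers (cf. \cite{harlin}, and \cite{tac} in the variable exponent setting), the whole difficulty being to track every error produced by the $x$-dependence of $k(\cdot)$ and $p(\cdot)$ and by the scale-dependence of the exponent frozen in $\Phi$ so that it fits the asserted right-hand side. \emph{Step 1: cone comparison.} For a.e.\ $\rho\in(0,1)$ the restriction $u|_{\partial B_\rho}$ lies in $W^{1,p(\cdot)}(\partial B_\rho,\m)$ and the $0$-homogeneous extension $w(x):=u(\rho x/\snr{x})$ is an admissible competitor for $u$ on $B_\rho$: it takes values in $\m$ and agrees with $u$ on $\partial B_\rho$ (no connectedness of $\m$ is needed here, this being a mere reparametrization). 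Writing $x=sx'$ with $x'\in\partial B_1$ one has $\snr{Dw(sx')}=(\rho/s)\snr{D_\tau u(\rho x')}$ with $D_\tau$ the tangential gradient, and since $\rho/s\ge1$, freezing the exponent at $p_2(\rho)$ (using $p(x)\le p_2(\rho)$ on $B_\rho$ and $\snr{z}^{p(x)}\le1+\snr{z}^{p_2(\rho)}$) and using $n>\gamma_2\ge p_2(\rho)$, so that $\int_0^\rho s^{n-1-p_2(\rho)}\,ds<\infty$, a direct computation gives
\[
\int_{B_\rho}k\snr{Dw}^{p(x)}\,dx\le c\,\rho\int_{\partial B_\rho}\snr{D_\tau u}^{p_2(\rho)}\,d\mathcal{H}^{n-1}+c\,\rho^{n};
\]
by minimality of $u$ the left-hand side may be replaced by $\int_{B_\rho}k\snr{Du}^{p(x)}\,dx$. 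The finiteness of $\int_0^\rho s^{n-1-p_2(\rho)}\,ds$ is the only point where $n>\gamma_2$ (rather than merely $n>\gamma_1$) is used.

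\emph{Step 2: almost-monotonicity of $\Phi$.} Set $I(t):=\int_{B_t}k\snr{Du}^{p_2(t)}\,dx$, so $\Phi(t)=t^{p_2(t)-n}e^{At^\alpha}I(t)$. Since $p(\cdot)\in Lip(\Omega)$, the map $t\mapsto p_2(t)=\sup_{B_t}p$ is Lipschitz with $\snr{p_2'(t)}\le[p]_{0,1}$ a.e.; moreover the higher integrability of Lemma \ref{inngeh} and Corollary \ref{C0} gives $\snr{Du}^{p(\cdot)+\varepsilon_0}\in L^1_{\mathrm{loc}}$ for some $\varepsilon_0>0$, hence $\snr{Du}^{p_2(t)}\snr{\log\snr{Du}}\in L^1_{\mathrm{loc}}$ — this is exactly what makes $t\mapsto I(t)$ differentiable and the reason we strengthen $p(\cdot)$ to Lipschitz. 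Combining (a) Step 1; (b) the exchange between $\snr{Du}^{p(x)}$ and $\snr{Du}^{p_2(t)}$ (and its boundary analogue), controlled through $p_2(t)-p_1(t)\le[p]_{0,1}t$, Lemma \ref{L0} and the Morrey bounds \eqref{16}--\eqref{17}, \eqref{mor}; (c) $k(0)=1$ and $\snr{k(x)-1}\le[k]_{0,\alpha}\snr{x}^\alpha$; (d) $p(x)\ge2$, giving the elementary inequality $\snr{D_\tau u}^{p_2(t)}\le\snr{Du}^{p_2(t)}-2^{-p_2(t)/2}\snr{u_r}^{p_2(t)}$ on $\partial B_t$ (with $u_r$ the radial derivative); one derives, for a.e.\ $t\in(0,T)$ and any fixed $\gamma\in(0,1)$,
\[
\frac{d}{dt}\!\left(t^{p_2(t)-n}I(t)\right)\ge c\,t^{p_2(t)-n}\!\int_{\partial B_t}\!\snr{u_r}^{p_2(t)}\,d\mathcal{H}^{n-1}-c_0\,t^{\alpha-1}\,t^{p_2(t)-n}I(t)-c_0\,t^{\gamma-1},
\]
the middle error collecting the contributions of $[k]_{0,\alpha}$, of $[p]_{0,1}$, and of the term $p_2'(t)\log t\int_{B_t}k\snr{Du}^{p_2(t)}\log\snr{Du}$ coming from the scale-dependent exponent. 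Choosing $A=A(n,[k]_{0,\alpha},[p]_{0,1},\alpha)$ so large that the integrating factor $e^{At^\alpha}$ absorbs that term, and $T\in(0,1)$ accordingly small, we get $\Phi'(t)\ge c\,t^{p_2(t)-n}e^{At^\alpha}\int_{\partial B_t}\snr{u_r}^{p_2(t)}\,d\mathcal{H}^{n-1}-c\,t^{\gamma-1}$, and integrating over $(r,R)\subset(0,T)$,
\[
\int_r^R t^{p_2(t)-n}\!\int_{\partial B_t}\!\snr{u_r}^{p_2(t)}\,d\mathcal{H}^{n-1}\,dt\le c\big(\Phi(R)-\Phi(r)+R^\gamma-r^\gamma\big).
\]

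\emph{Step 3: passing to the difference quotient.} For $x\in\partial B_1$ and $0<r<R<T$, from $u(Rx)-u(rx)=\int_r^R u_r(sx)\,ds$ and Hölder's inequality with weight $s^{-1}$ and exponents $p_2(r),\,p_2(r)':=p_2(r)/(p_2(r)-1)$,
\[
\snr{u(Rx)-u(rx)}^{p_2(r)}\le\Big(\log\tfrac Rr\Big)^{p_2(r)-1}\int_r^R s^{p_2(r)-1}\snr{u_r(sx)}^{p_2(r)}\,ds.
\]
Integrating over $\partial B_1$, substituting $y=sx\in\partial B_s$ ($d\mathcal{H}^{n-1}(y)=s^{n-1}\,d\mathcal{H}^{n-1}(x)$), using $\snr{u_r}^{p_2(r)}\le\snr{u_r}^{p_2(s)}+1$ for $r\le s$ (monotonicity of $p_2$), and then $s^{p_2(r)-n}\le r^{p_2(r)-p_2(R)}s^{p_2(s)-n}$ (again since $p_2$ is non-decreasing and $r,s,R<1$), one obtains, absorbing the additive $\int_{\partial B_s}1$-term into $R^\gamma-r^\gamma$ via $R-r\le\gamma^{-1}(R^\gamma-r^\gamma)$,
\[
\int_{\partial B_1}\!\snr{u(Rx)-u(rx)}^{p_2(r)}d\mathcal{H}^{n-1}\le c\,r^{p_2(r)-p_2(R)}\Big(\log\tfrac Rr\Big)^{p_2(r)-1}\!\Big(\int_r^R\! s^{p_2(s)-n}\!\!\int_{\partial B_s}\!\!\snr{u_r}^{p_2(s)}d\mathcal{H}^{n-1}ds+R^\gamma-r^\gamma\Big).
\]
Inserting the estimate of Step 2 into the last display yields exactly the claimed inequality (noting $r^{p_2(r)-p_2(R)}\ge1$ to also dominate the leftover $R^\gamma-r^\gamma$).

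\emph{Main obstacle.} The heart of the argument is Step 2: organizing the differential inequality for $t\mapsto t^{p_2(t)-n}I(t)$ so that, after multiplication by $e^{At^\alpha}$, \emph{every} error — from the merely Hölder coefficient $k$, from the non-constant $p(\cdot)$, and above all from the exponent in $\Phi$ being the scale-dependent $p_2(t)$, which forces the logarithmic term $p_2'(t)\log t\int_{B_t}k\snr{Du}^{p_2(t)}\log\snr{Du}\,dx$ — is either absorbed by the integrating factor or is no worse than $c\,t^{\gamma-1}$. This requires a systematic use of the reverse-Hölder inequalities of Lemma \ref{inngeh} and Corollary \ref{C0}, of the Morrey decay \eqref{mor}, and of Lemma \ref{L0}(ii) to keep the accumulated powers of $t$ bounded, together with a careful check that nothing worse than the asserted $R^\gamma-r^\gamma$ survives. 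The three structural hypotheses $n>\gamma_2$, $p(\cdot)\in Lip$ and $p(x)\ge2$ enter, respectively, in the finiteness of the cone energy (Step 1), in the integrability of $\snr{Du}^{p_2(t)}\log\snr{Du}$ (Step 2), and in the elementary inequality $(a-b)^{p/2}\le a^{p/2}-2^{-p/2}b^{p/2}$ used to extract the radial term (Step 2).
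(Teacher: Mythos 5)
Your proof reconstructs the argument from scratch, whereas the paper simply cites \cite[Lemma~4.1]{tac} and points out that the single auxiliary comparison map $v$ used there (to obtain Tachikawa's estimate (4.17)) must be taken to be the \emph{constrained} minimizer of the frozen $p_{2}(t)$-Dirichlet problem. Your cone comparison in \emph{Step 1} is a natural and valid alternative in the constrained setting, since the homogeneous-degree-zero extension automatically takes values in $\m$ and therefore needs no retraction; in this respect your argument is more explicit than the paper's one-line adaptation.

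There are however two points that as written do not quite work and need care. First, in the error bookkeeping of \emph{Step~2} you invoke the Morrey decay \eqref{mor}. But \eqref{mor} was established in \emph{Step 4} of the proof of Theorem~\ref{T0} only for balls compactly contained in the regular set $\Omega_{0}$, whereas Lemma~\ref{mon} is used in Theorem~\ref{T1} precisely to analyse \emph{singular} points; invoking \eqref{mor} there is circular. The bounds you need must come solely from the a~priori estimates of Corollary~\ref{C0}, namely \eqref{16}--\eqref{17}, which hold on every ball, typically combined with a H\"older-interpolation argument between $\snr{Du}^{p_{2}(t)}$ and $\snr{Du}^{(1+\delta)p_{2}(t)}$ to tame the term $p_{2}'(t)\int_{B_{t}}k\snr{Du}^{p_{2}(t)}\log\snr{Du}\,dx$ (grouping it with $p_{2}'(t)\log t\cdot I(t)$ as $p_{2}'(t)\int_{B_{t}}k\snr{Du}^{p_{2}(t)}\log(t\snr{Du})\,dx$ helps here). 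Second, the claim that the integrating factor $e^{At^{\alpha}}$ absorbs the $p_{2}'(t)$-related errors is only accurate for $\alpha<1$: for $\alpha=1$ the factor $\snr{\log t}$ coming from $p_{2}'(t)\log t$ exceeds $t^{\alpha-1}=1$ and cannot be beaten by the derivative of $e^{At}$; those contributions must instead be dominated by the $t^{\gamma-1}$ bucket via $\snr{\log t}\le c_{\gamma}t^{\gamma-1}$, so that after integration they land in $R^{\gamma}-r^{\gamma}$. Both issues are repairable and the overall structure (cone comparison, differential inequality for $t^{p_{2}(t)-n}I(t)$ with integrating factor, weighted H\"older in \emph{Step 3}) is sound, but the current version would not withstand scrutiny exactly at the scale-dependent-exponent errors it singles out as the main obstacle.
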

\begin{proof}
The proof is actually the same as the one given in \cite[Lemma 4.1]{tac}. There is only one small detail to change: the map $v$ introduced during the proof of Lemma 4.1 to obtain estimate $(4.17)$ must be replaced by a solution to the Dirichlet problem
\begin{flalign*}
W^{1,p_{2}(t)}_{u}(B_{t},\m)\ni w\mapsto \inf \int_{B_{t}}k(x)\snr{Dw}^{p_{2}(t)} \ dx.
\end{flalign*}
The rest stays unchanged.
\end{proof}
\subsection{Proof of Theorem \ref{T1}}
Combining the compactness Lemma \ref{com} and the monotonicity formula obtained in Lemma \ref{mon}, we are ready to prove Theorem \ref{T1}. If $\Omega^{+}$ is as in \eqref{om1}, then $u \in W^{1,n+\delta_{0}/4}(\Omega^{+},\m)$. So, by Morrey's embedding theorem, $u \in C^{0,\beta'}(\Omega^{+},\m)$ for $\beta':=\delta_{0}/(4n+\delta_{0})$ and, by \emph{Step 7} of Theorem \ref{T0}, we can conclude that $Du$ is locally $\beta_{0}$-H\"older continuous on $\Omega^{+}$ for some $\beta_{0}\in (0,1)$. This observation shows that, to prove Theorem \ref{T1} it is enough to assume that $\gamma_{2}<n$, and this condition assures the applicability of Lemma \ref{mon}.\\\\
\emph{Case 1: $n\le [\gamma_{1}]+1$.} Since $t\mapsto \Phi(t)$ can be seen as a difference between an increasing function of $t$ and $c t^{\gamma}$ for some $\gamma\in (0,1)$ and a positive constant $c$, it admits a finite limit as $t\to 0$. Assume that $u$ has a singular point at $\bar{x}=0$ which is not isolated. Then we can find a sequence of singular points $(x_{j})_{j\in \N}$ such that $x_{j}\to0$. Setting $R_{j}:=2\snr{x_{j}}<T<1$ we see that, for any $j$ the rescaled function $u_{j}(x):=u(R_{j}x)$ is a constrained local minimizer of the functional
\begin{flalign*}
\mathcal{E}_{j}(w,B_{1}):=\int_{B_{1}}R_{j}^{p(0)-p_{j}(x)}\snr{Dw}^{p_{j}(x)} \ dx, \quad p_{j}(x):=p(R_{j}x)
\end{flalign*}
and each $u_{j}$ has a singular point $y_{j}:=R^{-1}_{j}x_{j}$ with $\snr{y_{j}}=1/2$. Now we notice that the sequences $(R_{j}^{p(0)-p_{j}(\cdot)})_{j \in \N}$ and $(p_{j}(\cdot))_{j \in \N}$ satisfy \eqref{com1}, so by Lemma \ref{com} we get, up to extract a subsequence that the $u_{j}$'s $L^{2}$-weakly converge to a function $v$, constrained local minimizer of $\mathcal{E}_{0}(w,B_{1}):=\int_{B_{1}}\snr{Dw}^{p(0)} \ dx$ and that the $y_{j}$'s converge to $\bar{y}$, singular point of $v$ with $\snr{\bar{y}}=1/2$. Now pick two constants $0<\lambda<\mu<1$ and apply Lemma \ref{mon} with $r\equiv\lambda R_{j}$ and $R\equiv\mu R_{j}$ to get
\begin{flalign}\label{123}
\int_{\partial B_{1}}&\snr{u_{j}(\mu x)-u_{j}(\lambda x)}^{p_{2}(\lambda R_{j})}d\mathcal{H}^{n-1}(x)=\int_{\partial B_{1}}\snr{u(\mu R_{j} x)-u(\lambda R_{j} x)}^{p_{2}(\lambda R_{j})}d\mathcal{H}^{n-1}(x)\nonumber \\
\le &c(\lambda R_{j})^{p_{2}(\lambda R_{j})-p_{2}(\mu R_{j})}\left(\log(\mu/\lambda)\right)^{p_{2}(\lambda R_{j})-1}\left((\Phi(\mu R_{j})-\Phi(\lambda R_{j}))+(\mu^{\gamma}-\lambda^{\gamma})R_{j}^{\gamma}\right)\to 0.
\end{flalign}
Moreover, Lemma \ref{com} also says that $u_{j}\to v$ in $L^{(1+\tilde{\sigma})p(0)}(B_{r},\m)$ for all $r\in (0,1)$ and this leads to
\begin{flalign}\label{124}
\snr{u_{j}(\mu x)-u_{j}(\lambda x)}^{p_{2}(\lambda R_{j})}\to \snr{v(\mu x)-v(\lambda x)}^{p(0)} \ \ \mbox{a.e. in }B_{1}.
\end{flalign}
Finally, the compactness of $\m$ renders the $u_{j}$'s uniformly bounded, so, by the dominated convergence theorem, \eqref{123} and \eqref{124} we deduce that
\begin{flalign*}
\int_{\partial B_{1}}\snr{v(\mu x)-v(\lambda x)}^{p(0)}d\mathcal{H}^{n-1}(x)=0,
\end{flalign*}
for a.e. $\lambda$ and $\mu$. This means that $v$ is homogeneous of degree $0$, so the whole segment joining $\bar{x}$ and $\bar{y}$ is made of singular points of $v$, but, since we are assuming $n\le [\gamma_{1}]+1\le [p(0)]+1$, we obtain a contradiction to \cite[Theorem 4.5]{harlin}, which states that, under these conditions, $v$ can have only isolated singularities.\\\\
\emph{Case 2: $n>[\gamma_{1}]+1$.} Let us assume that for some $l>0$, $\mathcal{H}^{l}(\Sigma_{0}(u))>0$. Then, by blowing up, we obtain a constrained local minimizer $v$ of $\mathcal{E}_{0}$ with $\mathcal{H}^{l}(\Sigma_{0}(v))>0$, (see \cite{giamar}, Chapter 10). On the other hand, by \cite[Theorem 4.5]{harlin}, $l<n-[p(0)]-1\le n-[\gamma_{1}]-1$ and this concludes the proof.\\\\
\textbf{Acknowledgments.} The author would like to thank the referees for the patient they had in reading the originally submitted manuscript and for the their most useful remarks, that eventually led to an improved version of the paper.\\
This work was supported by the \emph{Engineering and Physical Sciences Research Council} $[EP/L015811/1]$.


\begin{thebibliography}{50}
\bibitem{acefus} E. Acerbi, N. Fusco, Local regularity for minimizers of nonconvex integrals. Ann. Sc. Normale di Pisa, Cl. Sci., $4^{e}$ s\'erie, tome 16, nr. 4, p. 603-636, (1989).
\bibitem{acefus1} E. Acerbi, N. Fusco, Regularity for minimizers of non-quadratic functionals: the case $1<p<2$. J. Math. Anal. Appl. 140, 115-135, (1989).
\bibitem{acemin} E. Acerbi, G. Mingione, Regularity results for electrorheological fluids: the stationary case. C. R. Math. Acad. Sci. Paris 334 817-822, (2002).


\bibitem{barcolmin}P. Baroni, M. Colombo, G. Mingione, Regularity for General Functionals with Double Phase. Calc. Var. \& PDE 57:62, (2018).
\bibitem{carkrispas} M. Carozza, J. Kristensen, A. Passarelli di Napoli, Regularity of minimizers of autonomous convex variational integrals. Ann. Sc. Norm. Super. Pisa Cl. Sci. vol. XIII, issue 5, (2014).

\bibitem{colmin} M. Colombo, G. Mingione, Regularity for Double Phase Variational Problems. Arch. Rational Mech. Anal. 215 (2015), 443-496.

\bibitem{colmin2} M. Colombo, G. Mingione, Calder\'on-Zygmund estimates and non-uniformly elliptic operators. {\em J. Funct. Anal.} 270, 1416--1478, (2016). 
\bibitem{cosmin} A. Coscia, G. Mingione, H\"older continuity of the gradient of $p(x)$-harmonic mappings. C. R. Acad. Sci. Paris S\'er. I Math, 328 (4): 363-368, (1999).
\bibitem{me} C. De Filippis, Higher integrability for constrained minimizers of integral functionals with $(p,q)$-growth in low dimension. Nonlinear Analysis 170, 1-20, (2018).
\bibitem{demi} C. De Filippis, G. Mingione, Manifold constrained non-uniformly elliptic problems. Preprint, (2018).
 
\bibitem{diestrver1} L. Diening, B. Stroffolini, A. Verde, Everywhere regularity of functionals with $\varphi$-growth. Manuscripta Math. 129, 449-481, (2009). 
 
\bibitem{diestrver} L. Diening, B. Stroffolini, A. Verde, The $\varphi$-harmonic approximation and the regularity of $\varphi$-harmonic maps. J. Differential Equations 253, 1943-1958, (2012).
\bibitem{dotleomus} A. D'Ottavio, F. Leonetti, C. Musciano, Maximum principle for vector-valued mappings minimizing variational integrals, Atti
Semin. Mat. Fis. Univ. Modena 46 (Suppl.), 677-683, dedicated to Prof. C. Vinti (Italy, Perugia 1996), (1998).
\bibitem{ele} M. Eleuteri, H\"older continuity results for a class of functionals with non standard growth. Bollettino U. M. I., pages 129-157, 2004.

\bibitem {sharp}
L. Esposito, F. Leonetti, G. Mingione, Sharp regularity for functionals with $(p,q)$ growth. 
J.~Differential Equations 204, 5--55, (2004).
\bibitem{evagar} L. C. Evans, R. F. Gariepy, Blow-up, compactness and partial regularity in the calculus of variations. Indiana Univ. Math. J., Vol. 36, pp. 361-371, (1987).
\bibitem{fed} H. Federer, Curvature measures. Trans. Amer. Math. Soc., 93, 418-491, (1959).
\bibitem{fushut} N. Fusco, J. Hutchinson, Partial regularity for minimisers of certain functionals having nonquadratic growth. Ann. Mat. Pura Appl. 155(4), 1-24, (1989).
\bibitem{gia} M. Giaquinta, Multiple Integrals in the Calculus of Variations and Nonlinear Elliptic Systems. Princeton Univ. Press, Princeton, (1983).
\bibitem{giagiudiff} M. Giaquinta, E. Giusti, Differentiability of minima of non-differentiable functionals. Inventiones Math. 72, 285-298, (1983).
\bibitem{giagiu1} M. Giaquinta, E. Giusti, On the regularity of the minima of certain variational integrals. Acta Math., Vol. 148, 31-46, (1982). 
\bibitem{giagiu} M. Giaquinta, E. Giusti, The singular set of the minima of certain quadratic functionals. Ann. Sc. Norm. Sup. Pisa 9, 45-55, (1984).
\bibitem{giamar} M. Giaquinta, L. Martinazzi, An Introduction to the Regularity Theory for Elliptic Systems, Harmonic Maps and Minimal Graphs. Edizioni della Normale, (2012).
\bibitem{giamod} M. Giaquinta, G. Modica, Remarks on the regularity of the minimizers of certain degenerate functionals. Manuscripta Math. 57, 55-99, (1986).
\bibitem{giamodsou} M. Giaquinta, G. Modica, J. Sou\u cek, Cartesian Currents in the Calculus of Variations II. [Results in Mathematics and Related Areas. 3rd Series. A Series of Modern Surveys in Mathematics], vol. 38, Springer-Verlag, Berlin, 1998, Variational integrals. MR 1645082 (2000b:49001b)
\bibitem{giu} E. Giusti, Direct Methods in the calculus of variations. World Scientific Publishing Co., Inc., River Edge, 2003.
\bibitem{harkinlin} R. Hardt, D. Kinderlehrer, F.-H. Lin, Stable defects of minimizers of constrained variational principles. Ann. Inst. H. Poincar\'e Anal. Non Lin\'eaire 5, 297-322, (1988).
\bibitem{harlin} R. Hardt, F.-H. Lin, Mappings Minimizing the $L^{p}$ Norm of the Gradient. Comm. on Pure and Appl. Math., Vol. XL, 555-588, (1987).
\bibitem{harhaskle} P. Harjulehto, P. H\"asto, R, Kl\'en, Generalized Orlicz spaces and related PDE, Nonlinear Anal. 143, 155-173, (2016). 
\bibitem{hat} A. Hatcher, Algebraic Topology. Cambridge University Press, Cambridge, 2002.
\bibitem{hop} C. Hopper, Partial Regularity for Holonomic Minimizers of Quasiconvex Functionals. Arch. Rational Mech. Anal. 222, 91-141, (2016).


\bibitem {KM1} T. Kuusi, G. Mingione: Guide to nonlinear potential estimates, Bull. Math. Sci. 4, 1-82, (2014). 

\bibitem {KM2} T. Kuusi, G. Mingione: Vectorial nonlinear potential theory, J.~Europ.~Math.~Soc. (JEMS), 20 929-1004, (2018). 



\bibitem{luc} S. Luckhaus, Partial H\"older continuity for minima of certain energies among maps into a Riemannian manifold. Indiana Univ. Math. J. 37, 349-367, (1988).
\bibitem{lie} G. M. Liebermann, Gradient estimates for a new class of degenerate elliptic and parabolic equations. Ann. Sc. Norm. Super. Pisa, Cl. Sci. $4^{e}$ s\'erie, tome 21, no 4, p. 497-522, (1994).

\bibitem {Manth} J. J. Manfredi, Regularity of the gradient for a class of nonlinear possibly
degenerate elliptic equations, Ph.D. Thesis,  University of
Washington, St. Louis. (1986). 

\bibitem{mar1} P. Marcellini, Regularity of minimizers of integrals of the calculus of variations with non standard growth conditions. Arch. Ration. Mech. Anal. 105, 267-284, (1989).
\bibitem{mar2} P. Marcellini, Regularity and existence of solutions of elliptic equations with p,q-growth conditions. J. Differ. Equations 90, 1-30, (1991).
\bibitem{dark} G. Mingione, Regularity of minima: an invitation to the Dark Side of the Calculus of Variations. Applications of Mathematics 51.4: 355-426, (2006). 
\bibitem{RT1} M. A. Ragusa, A. Tachikawa, Boundary regularity of minimizers of $p(x)$-energy functionals, Ann. IHP, Nonlinear Anal. 33 451-476, (2017). 

\bibitem{RT2} M. A. Ragusa, A. Tachikawa, On interior regularity of minimizers of $p(x)$-energy functionals. Nonlinear Anal. 93 162-167, (2013). 
    
    
\bibitem{RT3} M. A. Ragusa, A. Tachikawa, H. Takabayashi, Partial regularity of $p(x)$-harmonic maps. Transaction of the AMS, Vol. 365, nr. 6, 3329-3353, (2013).

\bibitem{RR}
K. R. Rajagopal, M. \Ruz{}, Mathematical modeling
of electrorheological materials, Cont. Mech. 
Therm.,13, 59-78, (2001). 

\bibitem{shouhl} R. Schoen, K. Uhlenbeck, A Regularity Theory for Harmonic Maps. J. Differential Geometry, 17, 307-335, (1982).
\bibitem{sim} L. Simon, Theorems on Regularity and Singularity of Energy Minimizing Maps. Lectures in Mathematics, ETH Z\"urich, Birkh\"auser, Basel, (1996).
\bibitem{tac} A. Tachikawa, On the singular set of $p(x)$-energy. Calc. Var. \& PDE 50, 145-169, (2014).
\bibitem{TU} A. Tachikawa, K. Usuba, Regularity results up to the boundary for minimizers of $p(x)$-energy with $p(x)>1$.  manu. math.152 (2017), 127--151. 
\bibitem{uhl} K. Uhlenbeck, Regularity for a class of non-linear elliptic sysyem. Acta Math. 138, 219-240, (1977).
\bibitem{ura} N. N. Ural'tseva - Degenerate quasilinear elliptic systems. Zap. Na. Sem. Leningrad. Otdel. Mat. Inst. Steklov. (LOMI) 7, 184-222, (1968).
\bibitem{U} K. Usuba, Partial regularity of minimizers of p(x)-growth functionals with $1<p(x)<2$, Bull. Lond. Math. Soc. 47  455-472,  (2015).
\bibitem{zhi1} V. V. Zhikov, On some variational problems. Russian J. Math. Phys. 5, 105-116, (1997).
\bibitem{zhi2} V. V. Zhikov, Averaging of functionals of the calculus of variations and
elasticity theory, Izv. Akad. Nauk SSSR Ser. Mat. 50,
675--710, (1986).

\bibitem{zhi3} V. V. Zhikov, On Lavrentiev's Phenomenon, Russian J. Math. Phys. 3, 249--269, (1995).

\bibitem{zhi4} V. V. Zhikov, S. M.Kozlov, O. A.Oleinik: Homogenization of
differential operators and integral functionals. Springer-Verlag,
Berlin, 1994.


\end{thebibliography}
\end{document}